\documentclass[11pt]{article}
\usepackage{amsmath}
\usepackage{amssymb}
\usepackage{amsthm}
\usepackage{latexsym}
\usepackage{enumitem}
\usepackage{color}
\usepackage{graphicx}
\usepackage{color}
\usepackage[pdftex=true, colorlinks=true,linkcolor=blue,citecolor=magenta]{hyperref} 
\DeclareSymbolFont{calletters}{OMS}{cmsy}{m}{n}
\DeclareSymbolFontAlphabet{\mathcal}{calletters}
\addtolength{\oddsidemargin}{-0.1 \textwidth}
\addtolength{\textwidth}{0.2 \textwidth}
\addtolength{\topmargin}{-0.1 \textheight}
\addtolength{\textheight}{0.2 \textheight}

\newtheorem{Theorem}{Theorem}[part]
\newtheorem{Definition}{Definition}[part]
\newtheorem{Proposition}{Proposition}[part]
\newtheorem{Assumptions}{Assumptions}[part]
\newtheorem{Lemma}{Lemma}[part]

\def\reff#1{{\rm(\ref{#1})}}

\def\Ac{{\cal A}}

\def\Fc{{\cal F}}

\def\no{\noindent}

\def\05{\frac{1}{2}}
\def\-1{^{-1}}
\def\1{{1\hspace{-1mm}{\rm I}}}
\def\={\;=\;}
\def\.{\;.}

\title{  }
\author{ }

\def\be{\begin{eqnarray}}
\def\ee{\end{eqnarray}}

\def\b*{\begin{eqnarray*}}
\def\e*{\end{eqnarray*}}

\def\E{\mathbb{E}}
\def\F{\mathbb{F}}

\def\R{\mathbb{R}}

\def\P{\mathbb{P}}

%%%%%%%%%%%%%%%%%%%%%%%%%%%%%%%%%%%%%%%%%%%%%%%%%%%%%%%%%%%%%%%%%%%%%%%%%%%%%%%%%%%%%%%%%%%%%%%%%%%%%%%%%%
%%%%%%%%%%%%%%%%%%%%%%%%%%%%%%%%%%%%%%%%%%%%%%%%%%%%%%%%%%%%%%%%%%%%%%%%%%%%%%%%%%%%%%%%%%%%%%%%%%%%%%%%%%

\newcommand{\nc}{\newcommand}
\nc{\argmin}{\mathop{\mathrm{arg\,min}}}

% I lettre
\def \F{I\!\!F}

\def \R{I\!\!R}

% \bar

% cali
\def\Ac{{\cal A}}

\def\Fc{{\cal F}}

% \hat

%\underline

% \tilde

% divers raccourcis

\def\-1{^{-1}}

\def\0.5{\frac{1}{2}}

\def\no{\noindent}
\def\={\;=\;}
\def\.{\;.}

\def\reff#1{{\rm(\ref{#1})}}

% divers notations

\def\1{{\bf 1}}

% \bf

% fonction utilite

% special

%\parindent=0pt

% Pour placer les figures au bon endroit
\newcommand{\MBFigure}[6]{
$\left. \right.$ \\
\refstepcounter{figure}
\addcontentsline{lof}{figure}{\numberline{\thefigure}{\ignorespaces #5}}
\begin{center}
\begin{minipage}{#1cm}
\centerline{\includegraphics[width=#2cm,angle=#3]{#4}}
\begin{center}
\upshape{F\textsc{ig} \normal
\end{center}
size{\thefigure}. $-$} #5
%\upshape{F\textsc{igure} \normalsize{\thefigure} $-$} #5
\end{center}
\label{#6}
\end{minipage}
\end{center}
$\left. \right.$ \\}
\title{Mean-Field Backward-Forward SDE with Jumps and Storage problem in Smart Grids
}
\author{
Arij {\sc Manai}
\\
	\texttt{ arij.manai@univ-lemans.fr} \footnote{Le Mans Universit\'e, Institut  du Risque et de l'Assurance du Mans.}
\and
Anis {\sc Matoussi}\\
	\texttt{ anis.matoussi@univ-lemans.fr} \footnote{Le Mans Universit\'e, Institut  du Risque et de l'Assurance du Mans.}
   \and
   Rym {\sc Salhi}
    \\
\texttt{ rym.salhi@univ-lemans.fr}
 \footnote{Le Mans Universit\'e, Institut  du Risque et de l'Assurance du Mans.}
  }
\begin{document}
\maketitle
\begin{abstract}
In this paper, we prove the existence and uniqueness of the solution of a coupled Mean-Field Forward-Backward SDE system with Jumps. Then, we give an application in the field of storage problem in smart grids, studied in \cite{clemence2017grid} in the case where the production of electricity is not predictable due, for example, to the changes in meteorological forecasts. 

\end{abstract}
\section{Introduction}
\paragraph{Forward Backward SDEs with jumps}
In this paper we focus on Mean field coupled forward BSDE (FBSDEs in short) with jumps. The theory of Forward Backward SDEs was first introduced in the continuous framework by Antonelli \cite{antonelli1993backward} in his PhD thesis where he proved the well-posedness of these equations over a sufficiently small time duration. The author also argue through a counter example that, over a considerable time period, the solvability of the Forward Backward SDEs may fail. Ma, Protter and Young \cite{MaProtterYoung4step} proved that the FBSDE is linked to some quasilinear PDE under the condition that the Forward equation satisfies the non degeneracy condition and the coefficients are not randomly disturbed. Later, Hu and Peng \cite{HUPeng95} proved existence and uniqueness under an arbitrary duration time by relaxing the degeneracy condition and substitute it by a monotonicity condition. In \cite{PengWu99}, Wu and Peng treated the same problem and weakened the monotonicity condition. They also make a link with linear quadratic stochastic optimal control problem using the maximum principle. We also refer to the paper of Yu and Ji \cite{YU2008} who studied the linear quadratic non zero sum games in term of FBSDE.\\

This theory has been extensively studied in the literature and many extensions have been made. In this paper, we are concerned with its extension to the jump setting for which the literature is very small. The first generalization of fully coupled FBSDEs to the jump setting was obtained by Zen Wu \cite{zhen1999forward}. Importantly,in \cite{zhen1999forward} and \cite{wu2003fully}, Wu extended the results in \cite{yong1997finding} and \cite{peng1999fully} to the case where the FBSDE is driven by both a Brownian motion and a Poisson random measure under the monotonicity condition. More precisely, in \cite{zhen1999forward}, the author obtained the existence and the uniqueness of the solution for such fully coupled FBSDEs with jumps, and in \cite{wu2003fully}, he proved the existence and the uniqueness of the solution as well as a comparison theorem for fully coupled FBSDEs with jumps over a stochastic interval.
\paragraph{Mean field forward-backward SDEs theory}
Historically the theory of mean field stochastic differential equations, also called \textit{McKean-Vlasov equations}, goes back to the early works of Kac \cite{kac1956foundations} and McKean \cite{mckean1966class} in the 1950's. It was initially suggested in order to study the behavior of a large number of mutually-interacting particles in different fields of of physical science. e.g. the derivation of Boltzmann or Vlasov equations in the kinetic gas theory.

With their pioneering recent paper \cite{lasry2007mean} , Lasry and Lions have enlarged considerably the horizon for applications of mean field problems. They extended this approach to problems in economics, finance and also to the theory of stochastic differential games where they introduced a general mathematical modeling approach of situations where a large number of particles is involved. More precisely, the authors considered N-player stochastic differential games, proved the existence of the Nash equilibrium points and derived rigorously the mean-field limit equation as N goes to infinity. Doing so, simple forms of BSDEs of McKean Vlasov type have been introduced and called of mean field type. 

In $2009$, nonlinear mean-field backward stochastic differential equations had been investigated with the work of Buckdahn, Djehiche Li and Peng \cite{buckdahn2009mean}. Since then, the theory of mean-field forward-backward stochastic differential equations(FBSDEs in short), as well as the theory of the associated partial differential equations (PDEs in short) of mean-field type has been intensively studied in the literature.

Buckdahn, Li and Peng \cite{buckdahn2009mf} studied a mean field problem in a Markovian setting. More precisely,  the authors investigate  in one hand, the existence and uniqueness of the mean field BSDEs in more general setting than the one in \cite{buckdahn2009mean}. In fact, unlike in \cite{buckdahn2009mean}, they consider that the coefficients are not necessarily deterministic. In the other hand, they give a comparison principle or this new type of BSDEs and study a decoupled mean-field FBSDEs and its relation with PDEs.

\paragraph{Contributions}
Our first purpose in this paper is to obtain existence and uniqueness results for a general class of fully coupled backward-forward stochastic differential equations of mean-field type (MF-FBSDE in short) with jumps under weak monotonicity conditions and without the non-degeneracy assumption on the forward equation.

Later, we give an application for storage in smart grids. This part is motivated by results of Alasseur et al. \cite{clemence2017grid}, where the authors  provide a stylized quantitative model for a power system with distributed local energy generation and storage.

This system is modeled as a network connecting a large number of nodes.  Each node has a local electricity consumption, a local electricity production, and manages a local storage device as in \cite{clemence2017grid}. However, unlike \cite{clemence2017grid}, we assume that the production of energy is unpredictable due to its dependence on environmental conditions such as the sun, the speed of the wind etc. which are intermittent and irregular. This leads to include a jump component in the net power production of each node. \\

 We highlight that an important volume of literature is devoted to the distributed storage management and the analysis of its development within the system. In particular, mean field games (MFG)approach has been already used by \cite{couillet2012mean} who analyze a system with controlled electrical vehicles and by \cite{de2015distributed} with local batteries. These two papers deal with numerical analysis of a corresponding MFG without providing the existence and uniqueness of the optimal control results.
 
 We also would like to stress out that the theory of mean field type control (MFC in short) is different from the mean field game (MFG in short) and even though, in general an optimal control on MFC is not an equilibrium strategy on MFG, nevertheless Lasry and Lions in \cite{lasry2007mean} have pointed out that in many cases a mean field Nash equilibrium is also the solution to an optimal control problem. Moreover, motivated by economic applications, Graber \cite{Graber2016}, have highlighted this point of view. \\
\noindent The outline of this paper is as follows. After recalling briefly some notations, we define, in section \ref{section 2}, the system of fully coupled forward backward SDE with jumps and we suggest existence and uniqueness results under different assumptions $\textbf{(H1)}$ and $\textbf{(H2)}$.
 
\noindent Then, in section \ref{section 3-4}, we consider a stylized model for a power network with distributed local power generation and storage. This model has been considered in \cite{clemence2017grid} where the system is modeled as network connecting a large number of nodes, where each node is characterized by a local electricity consumption, has a local electricity production  and manages a local storage device. In this part, in contrast to \cite{clemence2017grid},  we take account of the unpredictability of the energy production. In fact, is depends on intermittent and irregular environmental conditions and meteorological forecasts.\\
\noindent To illustrate this phenomena, in section \ref{section 3-4}, we include a jump component in our analysis. When the number of nodes is infinite, we link the problem to  an extended mean field game type control which unique solution is characterized through solving an associated fully forward backward stochastic differential equations with jumps.

\section{Framework: Notations and setting}
Let  $(\Omega ,\mathcal{F},\mathbb{P})$ be a filtered probability space on which is defined a $d$-dimensional Brownian motion $W=(W_{t})_{0\leq t\leq T}$ and an integer valued random measure $\pi$. We assume that the filtration $\mathbb{F}=(\mathcal{F}_{t})_{0<t<T}$ satisfies the usual conditions of completeness and right continuity.\\
Here $\pi$ is an integer valued random measure defined as
\begin{align}
\pi(\omega,dt,de):&(\Omega\times[0,T]\times\mathcal{E})\rightarrow(\mathcal{B}([0,T])\times \mathcal{E})  \nonumber \\
                  &(w,dt,de)\rightarrow \pi(\omega,dt,de)=\sum^{\Delta X_{s}\neq 0}_{s\in[0,T]}\delta_{ (s,\Delta X_{s}) }(dt,de).\nonumber
\end{align}
We will denote by $\eta$ the compensator of $\pi$ under the probability $\mathbb{P}$ (increasing process in any measurable subset $A$ and compensates the number of jump of $X$)  . For a $\sigma$-finite measure $\lambda$ and a bounded Radon-Nikodym derivative $\zeta$, we will assume that the compensator $\nu$ is absolutely continuous with respect to $\lambda\otimes dt$ such that
$$ \eta(dt,de)=\zeta(\omega,t,e)\lambda (de)dt, \,\,\,\,\,\,\,\,\,\,\,\, 0\leq \zeta \leq C_{\nu},$$
and  satisfies the following integrability condition $\int_{E}1\wedge |e|^{2}\lambda (de)<\infty$. We emphasize that, in our setting, jumps has finite activity jumps i.e.  $\int_{E} \eta(de)<\infty$.

\noindent The random measure $\tilde{\pi}$ is defined as the compensated measure of $\mu$ such that
$$ \tilde{\pi}(w,dt,de)=\pi(w,dt,de)-\eta(dt,de).$$

\noindent For any random variable $X$ on $(\Omega, \mathcal{F}_{t}, \mathbb{P})$, we denote by $\mathbb{P}_{X}$ its probability law under $\mathbb{P}$.
We denote by $\mathcal{M}_{1}(\mathbb{R}^{d})$ the set of probability measures on $\mathbb{R}^{d}$ equipped with the $2$-Wassertein distance 
\begin{align*}
\mathcal{W}_{2}(\mu, \mu')&:=\{(\int_{\mathbb{R}^{d}}|x-y|^{2}F(dx,dy))^{\frac{1}{2}}, F \in \mathcal{M}_{1}(\mathbb{R}^{d}\times \mathbb{R}^{d}) \mbox{ with marginals } \mu, \mu^{'}\}\\
&:=\{(\mathbb{E}|\xi - \xi'|^{2})^{\frac{1}{2}}: \xi=\mathcal{L}(\xi), \xi^{'}=\mathcal{L}(\xi^{'})\},
\end{align*}
where $\mathcal{L}(\xi)$ and $\mathcal{L}(\xi^{'})$ are respectively the law of $\xi$ and $\xi^{'}$.\\

\noindent Notice that if $X^{1}$ and $X^{2}$ are random variables of order $2$ with values in $\mathbb{R}^{d}$, then by definition we have 
\begin{equation}\label{Wasserstein}
\mathcal{W}_{2}(\mathbb{P}_{X^{1}}, \mathbb{P}_{X^{2}})\leq \Big[\mathbb{E}|X^{1}-X^{2}|^{2}\Big]^{\frac{1}{2}}.
\end{equation}
\noindent Now, we define the spaces of processes which will be used in the present work. We will denote by $\mathcal{P}$ the $\sigma$-field on $\Omega \times [0,T]$ generated by all left continuous adapted processes.

%In particular , the stochastic integral $U\star\tilde{\nu}=\int_{.} U_{s}(e)\tilde{\nu}(ds,de)$ is a local square integral martingale, for any predictable locally integral process $U$.

%We will assume the following Weak representation property, for any local martingale $M$
%$$M=M_{0}+ \int_{0}^{t} Z.dW+\int_{0}^{t}\int_{E} U_{s}(e)\tilde{\mu}(de,ds).$$
%\begin{align*}
%&\bullet\mathbb{H}^{2p}_{\nu}=:\left\lbrace  U:  \hbox{ predictable process } \hbox{ s.t }  \mathbb{E}\left[ \left(\int_{0}^{T}\int_{\mathbb{E}}|U_{s}(e)|^{2}\nu(de,ds)\right) \right]<+\infty\right\rbrace . \\
%&\bullet\mathcal{M}^{2,d}:=\left\lbrace Z:\mathbb{R}^{d} \hbox{  valued c\`{a}dl\'{a}g and } \mathcal{P-}\hbox{measurable process }    \hbox{ s.t }\mathbb{E}\left[ \int_{0}^{T} |Z_{s}|^{2}ds\right]<+\infty \right\rbrace .\\
%&\bullet\mathcal{K}^{2}:=\left\lbrace K: \mathcal{P} \hbox{-measurable continuous non decreasing process  s.t } K_{0}=0 \hbox{ and } \E\left[ K^{2}_{T}\right]<\infty \right \rbrace.
%\end{align*}
%This spaces are the classical spaces to study a  reflected BSDE with jumps. Now we introduce a new spaces on which our solution will belongs.\\
We denote by 
\begin{itemize}
\item $\mathcal{L}^{2}$ is the space of all $\mathbb{R}^{k}$-valued RCLL and $\mathcal{P}$-measurable such that 
$$
\E\int_{0}^{T} |Z_{s}|^{2}ds<+\infty,  \,\,\mathbb{P}\hbox{-a.s.}
$$
\item $\mathcal{L}^{2}_{\eta}$ is the space of all predictable processes such that$$  \|U\|^{2}_{\mathcal{L}^{2}_{\eta}}:=\E\int_{0}^{T}\int_{\mathbb{E}}|U_{s}(e)|^{2}\eta(de,ds)<+\infty, \,\,\mathbb{P}\hbox{-a.s.}$$
\item  For $u,\bar{u}\in\mathbb{L}^{0}(E,\lambda, \R^{d})$, we define 
\begin{equation*}
| u-\bar{u}|^{2}_{t}=\int_{E}|u-\bar{u}|^{2}\eta(de).
\end{equation*}
\end{itemize}
%$\bullet\,\mathcal{L}^{2}$ is the spaces of $\mathbb{R}^{k} $  valued c\`{a}dl\`{a}g and $ \mathcal{P}$-measurable processes   such that $$\E\int_{0}^{T} |Z_{s}|^{2}ds<+\infty,  \,\,\mathbb{P}-\hbox{a.s.}$$
%$\bullet \,\mathcal{L}^{2}_{\eta}$ is the space of predictable processes such that$$  \|U\|^{2}_{\mathcal{L}^{2}_{\eta}}:=\E\int_{0}^{T}\int_{\mathbb{E}}|U_{s}(e)|^{2}\eta(de,ds)<+\infty, \,\,\mathbb{P}-\hbox{a.s.}$$
%\begin{align*}
%& \mathcal{L}^{2,d}:=\left\lbrace Z:\mathbb{R}^{d} \hbox{  valued c\`{a}dl\'{a}g and } \mathcal{P-}\hbox{measurable process }    \hbox{ s.t } \int_{0}^{T} |Z_{s}|^{2}ds<+\infty,  \,\,\mathbb{P}-\hbox{a.s}\right\rbrace \\
%& \mathcal{L}^{2,d}_{\nu}=:\left\lbrace  U:  \hbox{ predictable process } \hbox{ s.t }   \int_{0}^{T}\int_{\mathbb{E}}|U_{s}(e)|^{2}\nu(de,ds)<+\infty, \,\,\mathbb{P}-\hbox{a.s}\right\rbrace.\\
%&\mathcal{K}:=\left\lbrace K: \mathcal{P} \hbox{-measurable continuous non decreasing process  s.t } K_{0}=0 \right \rbrace.\\
%&\mathcal{D}:=\left\lbrace Y: \R \hbox{ -valued }  \mathcal{P}\hbox{-measurable RCLL process }\right\rbrace.\\
%\vspace{0.5cm}
%&\mathcal{D}^{c}:=\left\lbrace Y: \R \hbox{ -valued }  \mathcal{P}\hbox{-measurable RCLL process such that } \Delta Y_{t}=0\right\rbrace.
%\end{align*}

%$\bullet$ For $u,\bar{u}\in\mathbb{L}^{0}(E,\lambda, \R^{k})$, we define 
%\begin{equation*}
%\| u-\bar{u}\|^{2}_{t}=\int_{E}|u-\bar{u}|^{2}\zeta(t,e)\lambda(de).
%\end{equation*}

\section{The system of mean-field FBSDE's with jumps}
\label{section 2}
In this section, we study the solvability of the following fully coupled forward backward SDE with jumps of mean field type
\begin{equation*}
\label{FBSDE}
\textbf{(S)}
{\small{
\begin{cases}
\vspace{0.1cm}
&\displaystyle{X_{t}=X_{0}+\int_{0}^{t}b_{s}(X_{s},Y_{s},Z_{s},K_{s},\mathbb{P}_{(X_{s},Y_{s})})ds+\int_{0}^{t}\sigma_{s}(X_{s},Y_{s},Z_{s},K_{s},\mathbb{P}_{(X_{s},Y_{s})}))dW_{s}}\\
\vspace{0.1cm}
& \qquad \displaystyle{+\int_{0}^{t}\int_{E}\beta(s,X_{s^{-}},Y_{s^{-}},Z_{s},K_{s},\mathbb{P}_{(X_{s},Y_{s})})\tilde{\pi}(ds,de),\hspace{0.2cm}0 \leq t\leq T,\, \P}\hbox{-a.s}.\\
&\displaystyle{Y_{t}=g(X_{T},\mathbb{P}_{X_{T}})+\int_{t}^{T}h_{s}(X_{s},Y_{s},Z_{s},K_{s},\mathbb{P}_{(X_{s},Y_{s})})ds-\int_{t}^{T}Z_{s}dW_{s}-\int_{t}^{T}\int_{E}K_{s}(e)\tilde{\pi}(ds,de).}
\end{cases}
}}
\end{equation*}
%where  $W$ is a Brownian motion $\tilde{\mu}(de,ds)=\mu(de,ds)-\nu(de,ds)$ is an independent compensated random measure where $\nu(de)$ is a positive $\sigma$-finite measure satisfying
%\begin{equation*}
%\int_{\mathbb{R}^{*}}(1\wedge|e|)^{2}\nu(de)<\infty.
%\end{equation*}
where $W$ is a $d$-dimensional Brownian motion, $(X,Y,Z,K)$ is an $\mathbb{R}^{d}\times \mathbb{R}^{d}\times \mathbb{R}^{d\times d}$-valued adapted processes and $\mathbb{P}_{(X_{t},Y_{t})}$ is the marginal distribution of $(X_{t},Y_{t})$.
%Let us first give the definition of a solution of the system $\textbf{(S)}$  
%\begin{Definition}
%We say that a quintuple $(X,Y,Z,K)$
%$(Y,Z,V,K^{-},K^{+})=(Y_{t},Z_{t},V_{t},K_{t}^{-},K_{t}^{+})_{0\leq t\leq T}\in\mathcal{D}\times \mathcal{L}^{2,d}\times\mathcal{L}^{2,d}_{\nu}$
 % is a solution to the mean field F-BSDE with jumps in $\mathcal{S}^{2}\times\mathcal{L}^{2}\times\mathcal{L}^{2}_{\eta}$, if it verifies the system $\textbf{(S)}$.
  %\end{Definition}
  \noindent We require that the coefficients of the system $(\textbf{S})$ satisfy the following assumptions.
 \begin{Assumptions}
 \textbf{Lipschitz Assumptions}
 \begin{itemize}
   \item[1-] The functions $b$, $h$, $\sigma$ and $\beta$ are Lipschitz in $(x,y,z,k)$ i.e.  there exists a constant $C>0$ such that $\forall t\in[0,T]$, $u=(x,y,z,k)$, $u'=(x',y',z',k') \in \mathbb{R}^{d+d+d\times d}$ and $\nu, \nu' \in \mathbb{M}_{1}(\mathbb{R}^{d}\times \mathbb{R}^{d})$,
\begin{align}\label{Lip of b sigma beta h }
&|b(t,u,\nu)-b(t,u',\nu')|+|h(t,u,\nu)-h(t,u',\nu')|+|\sigma(t,u,\nu)-\sigma(t,u',\nu')|\\
&+|\beta(t,u,\nu)-\beta(t,u',\nu')|\leq C\Big(|x-x'|+|y-y'|+\|z-z' \|+|k-k'|_{t}+\mathcal{W}_{2}(\nu,\nu')\Big).\nonumber
\end{align}
\item[2-] For $\phi\in\left\lbrace f,h,g,\sigma\right\rbrace, 
$ $\phi$ is Lipschitz w.r.t $x, y, z, \nu$ with $C^{x}_{\phi}$, $C^{y}_{\phi}$, $C^{z}_{\phi}$ and  $C^{\nu}_{\phi}$ as the Lipschitz constants. We also assume that $\beta$ satisfies the following conditions
\begin{equation}
\label{saut-lipschitz}
|\beta(x,\nu,e)|\leq C^{x}_{\beta}(1\wedge|e|)\quad\mbox{ and } \quad |\beta(x,\nu,e)-\beta(x',\nu',e)|\leq C^{x}_{\beta}(1\wedge |e|)|x-x'|.
\end{equation}
\item[3-] The function  $g:\Omega\times\mathbb{R}^{d}\times\mathbb{M}_{1}(\mathbb{R}^{d})\rightarrow \mathbb{R}^{d}$  is Lipschitz in $(x,\mu)$ i.e. there exists $C>0$ such that for all $x,\,\, x' \in \mathbb{R}^{d}$ and for all $\mu,\,\, \mu' \in \mathbb{M}_{1}(\mathbb{R}^{d})$,
\begin{equation}
\label{g-Lipschitz}
|g(x,\mu)-g(x',\mu')|\leq C(|x-x'|+\mathcal{W}_{2}(\mu,\mu')),\,\, \mathbb{P}\hbox{-a.s}.
\end{equation}
\end{itemize}
\end{Assumptions}

 % \underline{Assumptions:}\\
%We start by making the following assumptions.
%\\
%\noindent $\bullet$ We suppose that the functions $b$, $h$, $\sigma$ and $\beta$ are Lipschitz in $(x,y,z,k)$ i.e.  there exists a constant $C>0$ such that $\forall t\in[0,T]$, $u=(x,y,z,k)$, $u'=(x',y',z',k') \in \mathbb{R}^{m+m+m\times m}$ and $\nu, \nu' \in \mathbb{M}_{1}(\mathbb{R}^{m}\times \mathbb{R}^{m})$,
%\begin{align}\label{Lip of b sigma beta h }
%&|b(t,u,\nu)-b(t,u',\nu')|+|h(t,u,\nu)-h(t,u',\nu')|+|\sigma(t,u,\nu)-\sigma(t,u',\nu')|\\
%&+|\beta(t,u,\nu)-\beta(t,u',\nu')|\leq C\Big(|x-x'|+|y-y'|+\|z-z' %\|+|k-k'|_{t}+\mathcal{W}_{2}(\nu,\nu')\Big),\quad\mathbb{P}\hbox{-a.s}\nonumber.
%\end{align}
%As in \cite{HamadeneDjehiche}, we will use the following notation: for $\phi\in\left\lbrace f,h,g,\sigma\right\rbrace, 
%$ $\phi$ is Lipschitz w.r.t $x, y, z, \nu$ with $C^{x}_{\phi}$, $C^{y}_{\phi}$, $C^{z}_{\phi}$ and  $C^{\nu}_{\phi}$ as the Lipschitz constants. We also assume that $\phi$ is Lipschitz w.r.t $k$ as follows
%\begin{equation}
%|\beta(x,\nu,e)|\leq C^{x}_{\beta}(1\wedge|e|)\quad\mbox{ and } \quad %|\beta(x,\nu,e)-\beta(x',\nu',e)|\leq C^{x}_{\beta}(1\wedge |e|)|x-x'|.
%\end{equation}
%$\bullet$ $g:\Omega\times\mathbb{R}^{m}\times\mathbb{M}_{1}(\mathbb{R}^{m})\rightarrow \%mathbb{R}^{m}$  is Lipschitz in $(x,\mu)$ i.e. there exists $C>0$ such that $\forall x,x' \in \mathbb{R}^{m}, \nu,\nu' \in \mathbb{M}_{1}(\mathbb{R}^{m})$,
%\%begin{equation}
%|g(x,\mu)-g(x',\mu')|\leq C(|x-x'|+\mathcal{W}_{2}\mu,\mu')), \mathbb{P}\hbox{-a.s}.
%\end{equation}
\noindent For $u=(x,y,z,k)$ and $u'=(x',y',z',k') \in \mathbb{R}^{d+d+d\times d}\times \mathbb{L}^{0}(E,\lambda, \mathbb{R}^{d})$, we define the operator $\mathcal{A}$ in the following way
\begin{align*}
\mathcal{A}(t,u,u',\nu)&=\big[f(s,u,\nu)-f(s,u',\nu')\big](y-y')+\big[h(s,u,\nu)-h(s,u',\nu')\big](x-x')\\
&+[\sigma(s,u,\nu)-\sigma(s,u',\nu')](z-z')\\
&+\int_{E}(\beta(s,u,\nu)-\beta(s,u',\nu'))(k-k')(e)\eta(ds,de),
\end{align*} 
\subsection{Existence and uniqueness under $\textbf{(H1)}$}
In this part, we will present our first result which consists on proving the existence and uniqueness of the solution of the system $\textbf{(S)}$ under the following assumption:
\begin{equation*}\textbf{(H1)}\,\,\,
\label{h1-assump}
\begin{cases} 
(i)\mbox{ There exists } k>0, \mbox{ s.t } \forall t \in[0,T], \nu\in \mathbb{M}_{1}(\mathbb{R}^{d}\times \mathbb{R}^{d}), u, u' \in \mathbb{R}^{d+d+d \times d}, \,\,\,\,\\
 \qquad \qquad \mathcal{A}(t,u,u',\nu)\leq -k|x-x'|^{2}, \mathbb{P}\hbox{-a.s}.\\
(ii)\mbox{ There exists } k'>0, \mbox{ s.t } \forall \nu\in \mathbb{M}_{1}(\mathbb{R}^{d}\times \mathbb{R}^{d}),x,x'\in \mathbb{R}^{d}\\
\qquad \qquad(g(x,\nu)-g(x',\nu)).(x-x')\geq  k'|x-x'|^{2}, \mathbb{P}\mbox{-a.s.}
\end{cases}
\end{equation*}
We start by giving a key estimate for the difference of two solutions of the mean-field  fully coupled FBSDEs with jumps $\textbf{(S)}$  satisfying $\textbf{(H1)}$.
\begin{Lemma}\label{lemme estimation constante MBSDE}
Let $(Y^{'},Z^{'},K^{'})$ another solution  of the system $\textbf{(S)}$. Then, under $(\textbf{H1})$ we have the following estimates 
 \begin{equation}
 \E[|Y_{s}-Y^{'}_{s}|^{2}]\leq \Theta^{1}\E[|X_{T}-X^{'}_{T}|^{2}]+\Theta^{2}\int_{0}^{T}\E|X_{s}-X^{'}_{s}|^{2}ds
 \end{equation}
 \begin{equation}
 \E[\int_{0}^{T}[|Z_{s}-Z^{'}_{s}|^{2}+|K_{s}-K^{'}_{s}|_{s}^{2}]ds\leq \bar{\Theta}^{1}\E[|X_{T}-X^{'}_{T}|^{2}]+\bar{\Theta}^{2}\int_{0}^{T}\E|X_{s}-X^{'}_{s}|^{2}ds,
 \end{equation}
where
\begin{equation}
\begin{cases}
&\bar{\Theta}^{1}=2[(C_{h}^{x}+(C_{h}^{z})^{2}+(C_{h}^{k})^{2}+2C_{h}^{\nu}+2C_{h}^{y})]\Theta^{1}+2(C^{x}_{g}+C_{g}^{\nu})^{2}\\
&\bar{\Theta}^{2}=2[(C_{h}^{x}+(C_{h}^{z})^{2}+(C_{h}^{k})^{2}+2C_{h}^{\nu}+2C_{h}^{y})]\Theta^{2}+2(C_{h}^{x}+C_{h}^{\nu})\\
      &\Theta^{1}=  e^{(C_{h}^{x}+(C_{h}^{z})^{2}+(C_{h}^{k})^{2}+2C_{h}^{\nu}+2C_{h}^{y})T}(C^{x}_{g}+C_{g}^{\nu})^{2}\\
      &\Theta^{2}=e^{(C_{h}^{x}+(C_{h}^{z})^{2}+(C_{h}^{k})^{2}+2C_{h}^{\nu}+2C_{h}^{y})T}(C_{h}^{x}+C_{h}^{\nu}).
        \end{cases}
    \end{equation}
\end{Lemma}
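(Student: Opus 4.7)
The strategy is a standard Itô / Young / Gronwall argument on the backward component of \textbf{(S)}. Write $\delta X = X - X'$, $\delta Y = Y - Y'$, $\delta Z = Z - Z'$, $\delta K = K - K'$, and let $\delta h_r$ denote the increment of the driver at the two solutions. Applying Itô's formula for càdlàg semimartingales to $|\delta Y|^2$ on $[s,T]$, the quadratic variation of the compensated Poisson stochastic integral contributes the $\pi$-integral of $|\delta K|^2$, whose predictable projection is the corresponding $\eta$-integral. Taking expectation (the Brownian and compensated Poisson stochastic integrals being true martingales under the $\mathcal{L}^2$ and $\mathcal{L}^2_\eta$ integrability of $(\delta Y, \delta Z, \delta K)$) produces the key identity
\begin{equation*}
\mathbb{E}|\delta Y_s|^2 + \mathbb{E}\!\int_s^T\! |\delta Z_r|^2\, dr + \mathbb{E}\!\int_s^T\! |\delta K_r|_r^2\, dr \;=\; \mathbb{E}|\delta Y_T|^2 + 2\,\mathbb{E}\!\int_s^T\! \delta Y_r \cdot \delta h_r\, dr.
\end{equation*}

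The Lipschitz property \eqref{g-Lipschitz} combined with the Wasserstein bound \eqref{Wasserstein} gives $\mathbb{E}|\delta Y_T|^2 \leq 2(C_g^x + C_g^\nu)^2\, \mathbb{E}|\delta X_T|^2$. Componentwise Lipschitzness of $h$ and \eqref{Wasserstein} control $|\delta h_r|$ by $C_h^x|\delta X_r| + C_h^y|\delta Y_r| + C_h^z|\delta Z_r| + C_h^k|\delta K_r|_r + C_h^\nu \bigl(\mathbb{E}|\delta X_r|^2 + \mathbb{E}|\delta Y_r|^2\bigr)^{1/2}$. Applying Young's inequality $2ab \leq a^2 + b^2$ with coefficients tuned so that the cross terms involving $|\delta Z_r|$ and $|\delta K_r|_r$ come with unit coefficient yields
\begin{equation*}
2\,\mathbb{E}[\delta Y_r \cdot \delta h_r] \leq \alpha\, \mathbb{E}|\delta Y_r|^2 + \mathbb{E}|\delta Z_r|^2 + \mathbb{E}|\delta K_r|_r^2 + (C_h^x + C_h^\nu)\, \mathbb{E}|\delta X_r|^2,
\end{equation*}
with $\alpha := C_h^x + 2C_h^y + (C_h^z)^2 + (C_h^k)^2 + 2C_h^\nu$, which is exactly the exponential rate appearing in $\Theta^1$ and $\Theta^2$. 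Plugging back into the Itô identity and cancelling the $|\delta Z|^2$ and $|\delta K|_r^2$ integrals on the two sides reduces matters to a Gronwall-type inequality for $\mathbb{E}|\delta Y_s|^2$; Gronwall's lemma then delivers the first estimate with $\Theta^1 = e^{\alpha T}(C_g^x + C_g^\nu)^2$ and $\Theta^2 = e^{\alpha T}(C_h^x + C_h^\nu)$.

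For the second estimate, return to the same Itô identity at $s = 0$ and this time apply Young's inequality with coefficient $\tfrac12$ on the $|\delta Z|$ and $|\delta K|_r$ cross terms, so that after absorption $\tfrac12\, \mathbb{E}\!\int_0^T (|\delta Z_r|^2 + |\delta K_r|_r^2)\, dr$ survives on the left-hand side. Substituting the terminal bound on $\mathbb{E}|\delta Y_T|^2$ and the Gronwall bound on $\mathbb{E}|\delta Y_r|^2$ obtained in the previous step into the right-hand side yields the stated constants $\bar\Theta^1$ and $\bar\Theta^2$. The main technical obstacle is the careful bookkeeping of the jump bracket — ensuring that Itô's formula turns the $\pi$-integral of $|\delta K|^2$ into the $\eta$-integral defining $|\delta K_r|_r^2$ with the correct sign after taking expectation — together with the precise Young tuning that makes the absorption / cancellation steps close cleanly. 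Note that the monotonicity hypothesis \textbf{(H1)} is not actually invoked for this comparison estimate (only the Lipschitz hypotheses enter); \textbf{(H1)} will come into play in the uniqueness argument based on this lemma.
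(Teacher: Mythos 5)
Your proposal is correct and follows essentially the same route as the paper's proof: It\^o's formula applied to $|\Delta Y|^{2}$, the martingale property of the stochastic integrals, the Lipschitz and Wasserstein bounds on $h$ and $g$, Young's inequality tuned so the $|\Delta Z|^{2}$ and $|\Delta K|_{s}^{2}$ terms are absorbed, Gronwall for the first estimate, and then a re-run with coefficient $\tfrac12$ plus substitution of the first estimate for the second (and, as you note, \textbf{(H1)} is indeed never used, only the Lipschitz hypotheses). One small bookkeeping point: your intermediate terminal bound $2(C_{g}^{x}+C_{g}^{\nu})^{2}\,\mathbb{E}|\Delta X_{T}|^{2}$ carries an extra factor $2$ that is inconsistent with the constant $\Theta^{1}=e^{\alpha T}(C_{g}^{x}+C_{g}^{\nu})^{2}$ you then state; the sharper bound $(C_{g}^{x}+C_{g}^{\nu})^{2}\,\mathbb{E}|\Delta X_{T}|^{2}$, obtained by applying Young's inequality to the cross term $2C_{g}^{x}C_{g}^{\nu}|\Delta X_{T}|\,\mathcal{W}_{2}$ rather than using $(a+b)^{2}\le 2a^{2}+2b^{2}$, is what the paper uses and what the stated constants require.
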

\begin{proof}
For simplicity, we shall make the following notations that will be used all along this paper: $\Delta{X}=X'-X,\,\, \Delta{Y}=Y'-Y,\,\, \Delta{Z}=Z'-Z,\,\, \Delta{K}=K'-K$. 
Now, consider the following processes
\begin{align*}
 &\displaystyle{\zeta^{1}_{s}=\frac{h(X'_{s},Y'_{s},Z'_{s},K'_{s},\P_{(X'_{s},Y'_{s})})-h(X_{s},Y'_{s},Z'_{s},K'_{s},\P_{(X'_{s},Y'_{s})})}{X^{'}_{s}-X_{s}}},\\
&\zeta^{2}_{s}=\frac{h(X_{s},Y'_{s},Z'_{s},K'_{s},\P_{(X'_{s},Y'_{s})})-h(X_{s},Y_{s},Z'_{s},K'_{s},\P_{(X'_{s},Y'_{s})})}{Y^{'}_{s}-Y_{s}},\\
&\zeta^{3}_{s}=\frac{h(X_{s},Y_{s},Z'_{s},K'_{s},\P_{(X'_{s},Y'_{s})})-h(X_{s},Y_{s},Z_{s},K'_{s},\P_{(X'_{s},Y'_{s})})}{\|Z^{'}_{s}-Z_{s}\|^{2}}(Z^{'}_{s}-Z_{s}),\\
%&\zeta^{4}_{s}=\frac{h(X_{s},Y_{s},Z_{s},K'_{s},\P_{(X'_{s},Y'_{s})})-h(X_{s},Y_{s},Z_{s},K_{s},\P_{(X'_{s},Y'_{s})})}{K^{'}_{s}-K_{s}},
\end{align*}
which are respectively bounded by $C_{h}^{x},  C_{h}^{y}, C_{h}^{z}$ due to the Lipschitz assumption on $h$. 
%\begin{align*}
%&\bar{Y}_{t}=g(X^{'}_{T},\mu^{'}_{T})-g(X_{T},\mu_{T})+\int_{t}^{T}[\zeta^{1}_{s}\bar{X}_{s}+\zeta^{2}_{s}\bar{Y}_{s}+\zeta^{3}_{s}\bar{Z}_{s}]ds\\
%&-\int_{t}^{T}\bar{Z}_{s}dW_{s}-\int_{t}^{T}\bar K_{s}\tilde{\pi}(ds,de)+\int_{t}^{T}h(U^{'}_{s},\P_{(X'_{s},Y'_{s})})-h(U_{s},\P_{(X_{s},Y_{s})})ds\\
%&+\int_{t}^{T}h(X_{s},Y_{s},Z_{s},K'_{s},\P_{(X'_{s},Y'_{s})})-h(X_{s},Y_{s},Z_{s},K_{s},\P_{(X'_{s},Y'_{s})})ds.
%\end{align*}
We apply It\^{o}'s formula to the process $|\Delta{Y}|^{2}$ to obtain
\begin{align*}
&\E[|\Delta{Y}_{t}|^{2}]=\E[g(X'_{T},\P_{X'_{T}})-g(X_{T},\P_{X_{T}})]^{2}+2\E\int_{t}^{T}\Delta{Y}_{s}[\zeta^{1}_{s}\Delta{X}_{s}+\zeta^{2}_{s}\Delta{Y}_{s}+\zeta^{3}_{s}\Delta{Z}_{s}]ds\\
&-\E[\int_{t}^{T}\int_{E}\Big(|\Delta{Y}_{s^{-}}+\Delta{K}_{s}(e)|^{2}-|\Delta{Y}_{s^{-}}|^{2}\Big)\tilde{\pi}(de,ds)-\int_{t}^{T}\|\Delta{Z}_{s}\|^{2}ds]\\&-\E[\int_{t}^{T}\int_{E}\Big(|\Delta{Y}_{s^{-}}+\Delta{K}_{s}(e)|^{2}-|\Delta{Y}_{s^{-}}|^{2}-2|\Delta{Y}_{s^{-}}\Delta{K}_{s}(e)|\Big)\eta(de,ds)-\int_{t}^{T}2\Delta{Y}_{s}\Delta{Z}_{s}dW_{s}]\\
&+2\E[\int_{t}^{T}\Delta{Y}_{s}\big[h(U^{'}_{s},\P_{(X'_{s},Y'_{s})})-h(U^{'}_{s},\P_{(X_{s},Y_{s})})+h(X_{s},Y_{s},Z_{s},K'_{s},\P_{(X^{'}_{s},Y^{'}_{s})})-h(X_{s},Y_{s},Z_{s},K_{s},\P_{(X'_{s},Y'_{s})})\big]ds.
\end{align*}
Since the stochastic integrals are true martingales, we conclude that
\begin{align} 
\E[|\Delta{Y}_{t}|^{2}]+&\E[\int_{t}^{T}\|\Delta{Z}_{s}\|^{2}ds]+\E[\int_{t}^{T}\int_{E}|\Delta{U}_{s}(e)|^{2}\eta(de,ds)]\nonumber\\
&=\E[|g(X'_{T},\P_{X'_{T}})-g(X_{T},\P_{X_{T}})|^{2}]+2\E\int_{t}^{T}\Delta{Y}_{s}[\zeta^{1}_{s}\Delta{X}_{s}+\zeta^{2}_{s}\Delta{Y}_{s}+\zeta^{3}_{s}\Delta{Z}_{s}]ds\nonumber\\
&+\E[\int_{t}^{T}\Delta{Y}_{s}[h(X_{s},Y_{s},Z_{s},K'_{s},\P_{(X'_{s},Y'_{s})})-h(X_{s},Y_{s},Z_{s},K_{s},\P_{(X'_{s},Y'_{s})})]ds]\nonumber\\
&+2\E[\int_{t}^{T}\Delta{Y}_{s}[(U_{s},\P_{(X'_{s},Y'_{s})})-h(U^{'}_{s},\P_{(X_{s},Y_{s})})]ds].
\end{align}
Using the Lipschitz property of $h$, we obtain that 
\begin{align}\label{ito Y2}
\E\big[|\Delta{Y}_{t}|^{2}+\int_{t}^{T}\|\Delta{Z}_{s}\|^{2}ds&+\int_{t}^{T}\int_{E}|\Delta{K}_{s}(e)|^{2}\eta(de,ds)\big]\leq\E[|g(X'_{T},\P_{X'_{T}})-g(X_{T},\P_{X_{T}})|^{2}]\nonumber\\
&+2\E\int_{t}^{T}\Delta{Y}_{s}[C_{h}^{x}|\Delta{X}_{s}|+C_{h}^{y}|\Delta{Y}_{s}|+C_{h}^{z}|\Delta{Z}_{s}|+C_{h}^{k}|\Delta{K}_{s}|_{s}]ds \nonumber\\
&+2\E[\int_{t}^{T}\Delta{Y}_{s}[h(U'_{s},\P_{(X'_{s},Y'_{s})})-h(U'_{s},\P_{(X_{s},Y_{s})})]ds.
\end{align}
Notice that in one hand we have
\begin{align}\label{estim h constante}
&2\Delta{Y}_{s}[h(U'_{s},\P_{(X'_{s},Y'_{s})})-h(U'_{s},\P_{(X_{s},Y_{s})})]\leq 2C_{h}^{\nu}|\Delta Y_{s}|(\sqrt{\E[|\Delta{X}_{s}|^{2}]}+\sqrt{\E[|\Delta{Y}_{s}|^{2}]}),
\end{align}
 and in the other hand we have
\begin{align}\label{estim const}
&2\Delta{Y}_{s}[C_{h}^{x}|\Delta{X}_{s}|+C_{h}^{y}|\Delta{Y}_{s}|+C_{h}^{z}|\Delta{Z}_{s}|+C_{h}^{k}|\Delta{K}_{s}|_{s}]ds\\
&\leq C_{h}^{x}|\Delta{Y}_{s}|^{2}+C_{h}^{x}|\Delta{X}_{s}|^{2}+(C_{h}^{z})^{2}|\Delta{Y}_{s}|^{2}+|\Delta{Z}_{s}|^{2}+(C_{h}^{k})^{2}|\Delta{Y}_{s}|^{2}+|\Delta{K}|^{2}_{s}+2C_{h}^{y}|\Delta{Y}_{s}|^{2}.\nonumber
\end{align}
Moreover, by Young inequality and the Lispchitz property on $g$ we obtain 
\begin{align}\label{estim g constante}
|g(X'_{T},\P_{X'_{T}})-g(X_{T},\P_{X_{T}})|^{2}&=|g(X'_{T},\P_{X'_{T}})-g(X_{T},\P_{X'_{T}})+g(X_{T},\P_{X'_{T}})-g(X_{T},\P_{X_{T}})|^{2}\nonumber\\
&\leq 
|C_{g}^{x}|X_{T}'-X_{T}|+C_{g}^{\nu}\mathcal{W}_{2}(\mu',\mu)|^{2}\nonumber\\
&\leq (C^{x}_{g})^{2}|\Delta{X}_{T} |^{2}+(C_{g}^{\nu})^{2} |\Delta{X}_{T}|^{2}+2C_{g}^{x}C_{g}^{\nu}|\Delta{X}_{T}|\mathcal{W}_{2}(\mu',\mu)\nonumber\\
&\leq(C^{x}_{g})^{2}|\Delta{X}_{T} |^{2}+(C_{g}^{\nu})^{2} |\Delta{X}_{T}|^{2}+C_{g}^{x}C_{g}^{\nu}|\Delta{X}_{T}|^{2}+C_{g}^{x}C_{g}^{\nu}\mathcal{W}_{2}^{2}(\mu',\mu) \nonumber\\
&\leq(C^{x}_{g})^{2}|\Delta{X}_{T} |^{2}+(C_{g}^{\nu})^{2} |\Delta{X}_{T}|^{2}+2C_{g}^{x}C_{g}^{\nu}|\Delta{X}_{T}|^{2}\nonumber\\
&\leq (C^{x}_{g}+C_{g}^{\nu})^{2}|\Delta{X}_{T}|^{2}.
\end{align}
Now, plugging \eqref{estim g constante}, \eqref{estim const} and  \eqref{estim h constante} in \eqref{ito Y2} yields
\begin{align*}
\E[|\Delta{Y}_{t}|^{2}]
&\leq (C^{x}_{g}+C_{g}^{\nu})^{2}\E[|\Delta{X}_{T}|^{2}]+\int_{t}^{T}(C_{h}^{x}+C_{h}^{\nu})\E|\Delta{X}_{s}|^{2}ds \nonumber\\
&+\E[\int_{t}^{T}(C_{h}^{x}+(C_{h}^{z})^{2}+(C_{h}^{k})^{2}+2C_{h}^{\nu}+2C_{h}^{y})|\Delta{Y}_{s}|^{2}ds].
\end{align*}
Finally, Gronwall's lemma yields 
%\begin{equation*}
%\E[|\Delta{Y}_{s}|^{2}]\leq e^{[(C_{h}^{x}+(C_{h}^{z})^{2}+(C_{h}^{k})^{2}+2C_{h}^{\nu}+2C_{h}^{y})](T-t)} \left[(C^{x}_{g}+C_{g}^{\nu})^{2}\E[|\Delta{X}_{T}|^{2}]+\int_{0}^{T}(C_{h}^{x}+C_{h}^{\nu})\E|\Delta{X}_{s}|^{2}ds\right].
%\end{equation*}
\begin{equation*}
\E[|\Delta{Y}_{s}|^{2}]\leq e^{[(C_{h}^{x}+(C_{h}^{z})^{2}+(C_{h}^{k})^{2}+2C_{h}^{\nu}+2C_{h}^{y})]T} \Big[(C^{x}_{g}+C_{g}^{\nu})^{2}\E[|\Delta{X}_{T}|^{2}]+(C_{h}^{x}+C_{h}^{\nu})\int_{0}^{T}\E|\Delta{X}_{s}|^{2}ds\Big],
\end{equation*}
and we obtain  
 \begin{equation}
 \E[|\Delta{Y}_{s}|^{2}]\leq \Theta^{1}\E[|\Delta{X}_{T}|^{2}]+\Theta^{2}\int_{0}^{T}\E|\Delta{X}_{s}|^{2}ds.
 \end{equation}
 $\bullet$ Let us now prove the second estimates. 
Recalling \eqref{ito Y2} and noting that 
\begin{equation*}
2C_{h}^{z}|\Delta{Y}_{s}||\Delta{Z}_{s}|\leq 2(C_{h}^{z})^{2}| \Delta{Y}_{s}|^{2}+\frac{1}{2}|\Delta{Z}_{s}|^{2} \mbox{ and } 2C_{h}^{k}|\Delta{Y}_{s}||\Delta{K}_{s}|_{s}\leq 2(C_{h}^{k})^{2}| \Delta{Y}_{s}|^{2}+\frac{1}{2}|\Delta{K}_{s}|_{s}^{2}, 
\end{equation*}
we obtain 
\begin{align*}
\frac{1}{2}\E[\int_{t}^{T}[|\Delta{Z}_{s}|^{2}+|\Delta{K}_{s}|_{s}^{2}]ds]
&\leq (C^{x}_{g}+C_{g}^{\nu})^{2}\E[|\Delta{X}_{T}|^{2}]+\int_{t}^{T}(C_{h}^{x}+C_{h}^{\nu})\E|\Delta{X}_{s}|^{2}ds \nonumber\\
&+\E[\int_{t}^{T}(C_{h}^{x}+2(C_{h}^{z})^{2}+2(C_{h}^{k}+C_{h}^{\nu})^{2}+2C_{h}^{y})|\Delta{Y}_{s}|^{2}ds]
\end{align*}
Henceforth,  if we denote by 
\begin{align*}
&\bar{\Theta}^{1}=2[(C_{h}^{x}+(C_{h}^{z})^{2}+(C_{h}^{k})^{2}+2C_{h}^{\nu}+2C_{h}^{y})]\Theta^{1}+2(C^{x}_{g}+C_{g}^{\nu})^{2}\\
&\bar{\Theta}^{2}=2[(C_{h}^{x}+(C_{h}^{z})^{2}+(C_{h}^{k})^{2}+2C_{h}^{\nu}+2C_{h}^{y})]\Theta^{2}+2(C_{h}^{x}+C_{h}^{\nu}),
\end{align*}
we obtain 
\begin{equation}
 \E[\int_{t}^{T}(|\Delta{Z}_{s}|^{2}+|\Delta{K}|^{2}_{s})ds\leq \bar{\Theta}^{1}\E[|\Delta{X}_{T}|^{2}]+\bar{\Theta}^{2}\int_{0}^{T}\E|\Delta{X}_{s}|^{2}ds.
 \end{equation}
 \end{proof}
 The previous estimates allow to prove the following uniqueness of the solution of the Mean field FBSDE with jumps $\textbf{(S)}$.
\begin{Proposition}\label{unique H1}
Under $\textbf{(H1)}$, there exists a unique solution $U=(X,Y,Z,K)$ of the  mean field FBSDE with jumps $\textbf{(S)}$.
\end{Proposition}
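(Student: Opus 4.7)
The plan is to extract uniqueness from an Itô expansion of the cross product $\langle \Delta X_t, \Delta Y_t \rangle$ between two presumed solutions, combined with the a priori estimates already furnished by Lemma \ref{lemme estimation constante MBSDE}. Let $(X,Y,Z,K)$ and $(X',Y',Z',K')$ be two solutions of $\textbf{(S)}$ sharing the common initial value $X_{0}$, and write $\Delta X=X'-X$, $\Delta Y=Y'-Y$, $\Delta Z=Z'-Z$, $\Delta K=K'-K$. Applying Itô's formula to $\langle \Delta X_{t},\Delta Y_{t}\rangle$ on $[0,T]$, using $\Delta X_{0}=0$, and taking expectation to kill the Brownian and compensated-Poisson martingales, I obtain schematically
\begin{equation*}
\E\!\left[\Delta X_{T}\cdot\bigl(g(X'_{T},\P_{X'_{T}})-g(X_{T},\P_{X_{T}})\bigr)\right]
=\E\!\int_{0}^{T}\mathcal{A}(s,U_{s},U'_{s},\P_{(X_{s},Y_{s})})\,ds+R,
\end{equation*}
where the residual $R$ gathers the mean-field cross terms produced by the fact that $\mathcal{A}$ is evaluated at a single law, while the two drivers in the original system depend on the two distinct laws $\P_{(X_{s},Y_{s})}$ and $\P_{(X'_{s},Y'_{s})}$.

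Next I would apply both halves of hypothesis $\textbf{(H1)}$. Part $(ii)$ forces the left-hand side to be bounded below by $k'\,\E|\Delta X_{T}|^{2}$ up to a Wasserstein correction on $\P_{X_{T}}$ controlled via \eqref{Wasserstein}, while part $(i)$ forces the $\mathcal{A}$-integral to be bounded above by $-k\,\E\!\int_{0}^{T}|\Delta X_{s}|^{2}\,ds$. The residual $R$ and the Wasserstein corrections are then controlled in two steps: first, $\mathcal{W}_{2}^{2}(\P_{(X_{s},Y_{s})},\P_{(X'_{s},Y'_{s})})\leq \E|\Delta X_{s}|^{2}+\E|\Delta Y_{s}|^{2}$ by \eqref{Wasserstein}; second, Lemma \ref{lemme estimation constante MBSDE} controls $\E|\Delta Y_{s}|^{2}$, and the $Z$- and $K$-integrals, in terms of $\E|\Delta X_{T}|^{2}$ and $\int_{0}^{T}\E|\Delta X_{s}|^{2}\,ds$ with the explicit constants $\Theta^{i},\bar{\Theta}^{i}$. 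Plugging these controls back yields
\begin{equation*}
k'\,\E|\Delta X_{T}|^{2}+k\,\E\!\int_{0}^{T}|\Delta X_{s}|^{2}\,ds
\;\leq\; C_{1}\,\E|\Delta X_{T}|^{2}+C_{2}\int_{0}^{T}\E|\Delta X_{s}|^{2}\,ds,
\end{equation*}
with $C_{1},C_{2}$ depending only on the Lipschitz data and on $\Theta^{i},\bar{\Theta}^{i}$.

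The monotonicity constants $k,k'$ in $\textbf{(H1)}$ are designed to dominate $C_{1},C_{2}$, so $k'-C_{1}\geq 0$ and $k-C_{2}\geq 0$, with at least one strict. A Gronwall argument then yields $\E|\Delta X_{s}|^{2}=0$ for every $s\in[0,T]$, hence $\Delta X\equiv 0$. Feeding this back into Lemma \ref{lemme estimation constante MBSDE} immediately gives $\Delta Y\equiv 0$, $\Delta Z\equiv 0$ and $\Delta K\equiv 0$ in the appropriate norms, proving uniqueness. Existence over the whole interval $[0,T]$ would then be obtained by the standard continuation-in-parameter method of Hu--Peng / Peng--Wu adapted to the mean-field setting: interpolate between the fully decoupled system (for which a solution is constructed by separate Picard iteration on the forward and backward equations) and $\textbf{(S)}$, and use exactly the same a priori bound just derived as the inductive step that keeps the interpolation parameter range open and closed.

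The main obstacle is the bookkeeping in the second step: the mean-field dependence of $h$ and $g$ produces terms that are not directly dominated by $(H1)(i)$ because two different laws enter them, and the jump term in $\mathcal{A}$ must be handled by the weighted norm $|\cdot|_{t}$ arising from $\eta$. Absorbing all these contributions into $C_{1}$ and $C_{2}$ while keeping them strictly below $k',k$ is the delicate step; this is precisely what forces the Wasserstein Lipschitz constants $C_{g}^{\nu},C_{h}^{\nu}$ of the coefficients to play the role they do in the constants $\Theta^{i},\bar{\Theta}^{i}$ of Lemma \ref{lemme estimation constante MBSDE}.
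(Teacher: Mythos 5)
Your uniqueness argument is essentially the paper's own proof of this Proposition: apply It\^o's formula to $\Delta X_t\,\Delta Y_t$, take expectations after checking the stochastic integrals are true martingales, bound the terminal term from below by $(k'-C_g^{\nu})\,\E|\Delta X_T|^2$ using $\textbf{(H1)}(ii)$ together with \eqref{Wasserstein}, bound the integrand from above by $-k|\Delta X_s|^2$ plus Lipschitz-in-law corrections using $\textbf{(H1)}(i)$, Young's inequality and $\mathcal{W}_2^2(\nu_s,\nu'_s)\le \E|\Delta X_s|^2+\E|\Delta Y_s|^2$, and then absorb $\Delta Y,\Delta Z,\Delta K$ via the estimates of Lemma \ref{lemme estimation constante MBSDE} before imposing the size conditions on $k,k'$. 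Two small remarks on your closing step: no Gronwall argument is needed (once the constants are dominated the inequality reads $(k'-C_1)\E|\Delta X_T|^2+(k-C_2)\int_0^T\E|\Delta X_s|^2ds\le 0$, which is a direct absorption), and "at least one strict" is not quite enough --- you need strictness in the coefficient of the time integral, i.e. $k>C_2$, to kill $\Delta X$ on all of $[0,T]$; if only the terminal coefficient is strict you only obtain $X_T=X'_T$. The paper states its conditions with non-strict inequalities and is loose on the same point, so this is a shared imprecision rather than a defect of your argument; your explicit use of Lemma \ref{lemme estimation constante MBSDE} to deduce $\Delta Y=\Delta Z=\Delta K=0$ afterwards is fine.

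The genuine gap is the existence half of the statement. You dispose of it in one sentence by invoking the continuation-in-parameter method of Hu--Peng / Peng--Wu "adapted to the mean-field setting", but you neither perform the interpolation between a solvable system and $\textbf{(S)}$, nor establish the a priori estimates uniformly in the interpolation parameter (these must now carry the law dependence through the Wasserstein terms and the jump norm $|\cdot|_t$), nor verify the fixed step-size argument that keeps the admissible parameter set both open and closed. The paper takes a different, fully worked-out route (Theorem \ref{existence H1}): it defines a recursive sequence $U^{n}$ through a $\delta$-perturbation of the forward equation, repeats the It\^o-on-the-product computation with $\textbf{(H1)}$ and Young's inequality to obtain a contraction for a suitable choice of the auxiliary constants, deduces that $(X^n_T)$ and $(X^n,Y^n,Z^n,K^n)$ are Cauchy in the appropriate $\mathcal{L}^2$ spaces, and passes to the limit in the scheme. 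If you wish to keep the continuation route you must actually carry out these estimates in the mean-field jump setting; as written, the existence part of your proposal is asserted rather than proved, while the uniqueness part coincides with the paper's argument.
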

 
\begin{proof}
%\underline{\textbf{Uniqueness:}}\\
%First, we make the following notations
%\begin{equation*}
%\Delta X_{t}=X_{t}-X_{t}^{'},\,\,\, \Delta Y_{t}=Y_{t}-Y_{t}^{'},\,\,\,\,\, \Delta Z_{t}=Z_{t}-Z_{t}^{'},\,\,\,\,\, \Delta K_{t}=K_{t}-K_{t}^{'},\,\,\,\,\, \Delta U_{t}=U_{t}-U_{t}^{'}.
%\end{equation*}
Suppose that $\textbf{(S)}$ has another solution $U'=(X',Y',Z',K')$. 
Applying It\^o's formula to the product $\Delta X_{t}\Delta Y_{t}$ gives
\begin{align*}
&d(\Delta X_{t}\Delta Y_{t})=\Delta X_{t}d(\Delta Y_{t})+\Delta Y_{t}d(\Delta X_{t})+d\langle \Delta X_{t},\Delta Y_{t}\rangle_{t}
\end{align*}
Taking the conditional expectation, we obtain  
\begin{align*}
\Gamma_{T}&=\mathbb{E}[ \Delta X_{T}\Delta Y_{T}]=\mathbb{E}\Big[\displaystyle  \int_{0}^{T}\{(f(s,U_{s},\nu_{s})-f(s,U'_{s},\nu'_{s}))\Delta Y_{s}\\
&+(h(s,U_{s},\nu_{s})-h(s,U'_{s},\nu'_{s})\Delta X_{s}+(\sigma(s,U_{s},\nu_{s})-\sigma(s,U'_{s},\nu'_{s}))\Delta Z_{s}\}ds\\
&+\int_{0}^{T}\int_{E}(\beta(s,U_{s},\nu_{s})-\beta(s,U'_{s},\nu'_{s}))\Delta K_{s}\eta(ds,de)\Big]+\E[\int_{0}^{T}\Delta X_s\Delta Z_sdW_{s}]\\
&+\int_{0}^{T}\int_{E}\Delta X_s\Delta K_{s}\tilde{\pi}(ds,de)]+\E[\int_{0}^{T}\Delta Y_s(\sigma(s,U_s,\nu)-\sigma(s,U^{'}_s,\nu_s^{'})°dW_{s}]\\
&+\E[\int_{0}^{T}\int_{E}\Delta Y_s(\beta(s,U_s,\nu_s)-\beta(s,U^{'}_s,\nu_s^{'}))\tilde{\pi}(de,ds)].
\end{align*}
 
\noindent Let us observe that the local martingale $\int_{0}^{t}\Delta X_s\Delta Z_sdW_{s}+\int_{0}^{T} \Delta X_s\Delta K_{s}\tilde{\pi}(ds,de)$ is a true $(\P,\mathcal{F})$ martingale. Indeed, using the BDG inequality with the help of the square integrability of $\Delta Y$, $\Delta Z$ and $\Delta K$ 
\begin{align*}
\E\Big[\sup_{0\leq t\leq T}| \int_{0}^{t}\Delta X_s\Delta Z_sdW_{s}|\Big]
&\leq C \E\big[\sup_{0\leq t\leq T}|\Delta X_{t}|^{2}\int_{0}^{T}|\Delta Z_{s}|^{2 }ds\big]^{\frac{1}{2}
}\\
&\leq C( \E[\sup_{0\leq t\leq T}|\Delta X_t|^{2}]+\E[\int_{0}^{T}|\Delta Z_{s}|^2ds])<+\infty,
\end{align*}
and 
\begin{align*}
\E[\sup_{0\leq t\leq T}|\int_{0}^{t}\int_{E}\Delta X_s\Delta K_{s}\tilde{\pi}(de,ds)]
&\leq C\E[\int_{0}^{T}\int_{E}|\Delta X_{s}\Delta K_{s}|^{2}\eta(de,ds)]^{\frac{1}{2}}\\
&\leq C(\E[\sup_{0\leq t\leq T}|\Delta X_{t}|^{2}]+\E[\int_{0}^{T}\int_{E}|\Delta K_{s}|^{2}\eta(de,ds)])<+\infty.
\end{align*}
 In the same way, we can prove that $\int_{0}^{T}\Delta Y_s(\sigma(s,U_s,\nu)-\sigma(s,U^{'}_s,\nu_s^{'})dW_{s}+\int_{0}^{T}\Delta Y_s(\beta(s,U_s,\nu_s)-\beta(s,U^{'}_s,\nu_s^{'})\tilde{\pi}(de,ds)$ is a $(\P,\mathcal{F})$-martinagle.\\
We now study each term separately. Let us start by the term  $\Delta X_{T}\Delta Y_{T}$: Using $\textbf{(H1)}$, we compute
\begin{align}\label{born-inf}
\Gamma_{T}&=\mathbb{E}[(\Delta X_{T})(g(X'_{T},\mathbb{P}_{X'_{T}})-g(X_{T},\mathbb{P}_{X_{T}}))]\nonumber\\
&\geq \mathbb{E}[k'|\Delta X_{T}|^{2}-C^{\nu}_{g}\big(|\Delta X_{T}|.\mathcal{W}_{2}(\mathbb{P}_{X'_{T}},\mathbb{P}_{X_{T}})\big)]\nonumber\\
&\geq k'\mathbb{E}[|\Delta X_{T}|^{2}]-C^{\nu}_{g}\mathbb{E}[|\Delta X_{T}|]\mathbb{E}[|\Delta X_{T}|^{2}]^{\frac{1}{2}}\nonumber\\
&\geq (k'-C^{\nu}_{g})\mathbb{E}[|\Delta X_{T}|^{2}].
\end{align}
 %So, in one hand, we have 
%\begin{equation}
%\label{born-inf}
%\Gamma_{T}\geq (k'-C^{\nu}_{g})\mathbb{E}[|\Delta X_{T}|^{2}].
%\end{equation}

\noindent On the other hand, we have
\begin{align*}
\Gamma_{T}\leq \mathbb{E}\Big[&\displaystyle\int_{0}^{T}\{\mathcal{A}(s,U_{s},U'_{s},\nu_{s})+((f(s,U'_{s},\nu_{s})-f(s,U'_{s},\nu'_{s})).\Delta Y_{s}\\
&+(h(s,U'_{s},\nu_{s})-h(s,U'_{s},\nu'_{s})).\Delta X_{s}+(\sigma(s,U'_{s},\nu_{s})-\sigma(s,U'_{s},\nu'_{s}))\Delta Z_{s}\}ds\nonumber\\
&+\int_{0}^{T}\int_{E}(\beta(s,U'_{s},\nu_{s})-\beta(s,U'_{s},\nu'_{s}))\Delta K_{s}\eta(ds,de)\Big].\nonumber
\end{align*}
The Lipschitz assumption and Young inequality: $ ab\leq \frac{1}{2}(a^{2}+b^{2})$ entails
\begin{align*}
\Gamma_{T}&\displaystyle{\leq\mathbb{E}\Big[\int_{0}^{T}[\mathcal{A}(s,U_{s},U^{'}_{s},\nu)+\big(C^{\nu}_{h} |\Delta X_{s}|+C^{\nu}_{f}|\Delta Y_{s}|+C^{\nu}_{\sigma}|\Delta Z_{s}|+C^{\nu}_{\beta}|\Delta K_s|_{s}\big)\mathcal{W}_{2}(\nu_{s},\nu_{s}^{'})]ds\Big]}\nonumber\\
&\leq\mathbb{E}[\displaystyle{\int_{0}^{T}-k|\Delta{X}_{s}|^{2}ds]+\frac{1}{2}
\mathbb{E}[\int_{0}^{T}(C_{h}^{\nu}|\Delta{X}_{s}|^{2}
+C_{h}^{\nu}\mathcal{W}_{2}^{2}(\nu^{'}_{s},\nu_{s})+C_{f}^{\nu}|\Delta{Y}_{s}|^{2}
+C_{f}^{\nu}\mathcal{W}_{2}^{2}(\nu^{'}_{s},\nu_{s}))ds]}\nonumber\\
&+\frac{1}{2}\mathbb{E}[\int_{0}^{T}(C_{\sigma}^{\nu}\|\Delta{Z}_{s}\|^{2}
+C_{\sigma}^{\nu}\mathcal{W}_{2}^{2}(\nu_{s}^{'},\nu_{s}))ds
+\frac{1}{2}\mathbb{E}[\int_{0}^{T}(C_{\beta}^{\nu}|\Delta K_{s}|_{s}^{2}
+C_{\beta}^{\nu}\mathcal{W}_{2}^{2}(\nu^{'}_{s},\nu_{s}))ds]\nonumber
\end{align*}
Observe that 
\begin{equation}
\mathcal{W}_{2}^{2}(\nu'_{s},\nu_{s})\leq \E[|\Delta X_{s}|^{2}]+E[|\Delta Y_{s}|^{2}],
\end{equation}
we obtain 
\begin{align*}
&\Gamma_{T}\leq\mathbb{E}[\displaystyle\int_{0}^{T}-k|\Delta X_{s}|^{2}ds\Big]+\E[\int_{0}^{T}[C_{h}^{\nu}+\frac{1}{2}(C_{f}^{\nu}+C_{\sigma}^{\nu}+C_{\beta}^{\nu})]|\Delta X_{s}|^{2}ds]\\
&+\E[\int_{0}^{T}[C_{f}^{\nu}+\frac{1}{2}(C_{h}^{\nu}+C_{\sigma}^{\nu}+C_{\beta}^{\nu})]|\Delta Y_{s}|^{2}ds]+\frac{1}{2}C_{\sigma}^{\nu}\E[\int_{0}^{T}\|\Delta Z_{s}\|^{2}ds]+\frac{1}{2}C_{\beta}^{\nu}\E[\int_{0}^{T}|\Delta K_{s}|_{s}^{2}ds].
\end{align*}
Now using the estimates in lemma \eqref{lemme estimation constante MBSDE}, we get 
\begin{align*}
&\Gamma_{T}\leq\mathbb{E}\Big[\displaystyle\int_{0}^{T}(-k+[C_{h}^{\nu}+\frac{1}{2}(C_{f}^{\nu}+C_{\sigma}^{\nu}+C_{\beta}^{\nu})]|\Delta X_{s}|^{2}ds]\\
&+[C_{f}^{\nu}+\frac{1}{2}(C_{h}^{\nu}+C_{\sigma}^{\nu}+C_{\beta}^{\nu})] \Big(\Theta^{1}\E[|\Delta{X}_{T}|^{2}]+\Theta^{2}\int_{0}^{T}\E|\Delta{X}_{s}|^{2}ds\Big)\\
&+\frac{1}{2}(C_{\sigma}^{\nu}\wedge C_{\beta}^{\nu})\big(\bar{\Theta}^{1}\E[|\Delta{X}_{T}|^{2}]+\bar{\Theta}^{2}\int_{0}^{T}\E|\Delta{X}_{s}|^{2} ds\big)\Big].\nonumber
\end{align*}
Hence  \eqref{born-inf} gives that
\begin{align}
&(k'-C^{\nu}_{g})\mathbb{E}[|\Delta X_{T}|^{2}]\leq \Big(\Theta^{1}\big[C_{f}^{\nu}+\frac{1}{2}(C_{h}^{\nu}+C_{\sigma}^{\nu}+C_{\beta}^{\nu})\big]+\frac{1}{2}(C_{\sigma}^{\nu}\wedge C_{\beta}^{\nu})\bar{\Theta}^{1}\Big)\mathbb{E}[|\Delta X_{T}|^{2}]\\
&\Big(-k+[C_{h}^{\nu}+\frac{1}{2}(C_{f}^{\nu}+C_{\sigma}^{\nu}+C_{\beta}^{\nu})]+[C_{f}^{\nu}+\frac{1}{2}(C_{h}^{\nu}+C_{\sigma}^{\nu}+C_{\beta}^{\nu})] \Theta^{2}+\frac{1}{2}(C_{\sigma}^{\nu}\wedge C_{\beta}^{\nu})\bar{\Theta}^{2}
\Big)\mathbb{E}[\int_{0}^{T}|\Delta X_{s}|^{2}ds],\nonumber
\end{align}
Henceforth taking 
$$(k'-C^{\nu}_{g})\geq\Theta^{1}\big[C_{f}^{\nu}+\frac{1}{2}(C_{h}^{\nu}+C_{\sigma}^{\nu}+C_{\beta}^{\nu})\big]+\frac{1}{2}(C_{\sigma}^{\nu}\wedge C_{\beta}^{\nu})\bar{\Theta}^{1}$$ and 
$$k \geq [C_{h}^{\nu}+\frac{1}{2}(C_{f}^{\nu}+C_{\sigma}^{\nu}+C_{\beta}^{\nu})]+[C_{f}^{\nu}+\frac{1}{2}(C_{h}^{\nu}+C_{\sigma}^{\nu}+C_{\beta}^{\nu})] \Theta^{2}+\frac{1}{2}(C_{\sigma}^{\nu}\wedge C_{\beta}^{\nu})\bar{\Theta}^{2},$$
we  obtain that $\forall t\in[0,T], X^{'}_{t}=X_{t}$ and then we obtain the uniqueness of the solution of the FBSDEJ \eqref{FBSDE}.
\end{proof}
\begin{Theorem}\label{existence H1}
Under Assumption $\textbf{(H1)}$, there exists a solution $U=(X,Y,Z,K)$ of the  mean field FBSDE with jumps $\textbf{(S)}$.
\end{Theorem}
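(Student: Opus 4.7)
My approach is the classical method of continuation in a parameter $\alpha\in[0,1]$, in the spirit of Hu-Peng \cite{HUPeng95} and the jump extension of Wu \cite{zhen1999forward}, adapted to the present mean-field setting. The plan is to introduce a family $(\textbf{S}_\alpha)$ interpolating between a linear, explicitly solvable system at $\alpha=0$ and the target $(\textbf{S})$ at $\alpha=1$, and to show that the set $\mathcal{J}\subseteq[0,1]$ of parameters for which $(\textbf{S}_\alpha)$ is solvable for every admissible quadruple of inhomogeneities contains $0$ and is right-open, hence equals $[0,1]$. Evaluating at $\alpha=1$ with zero inhomogeneities then yields the desired solution of $(\textbf{S})$.

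More precisely, fix reference linear coefficients $\bar b(u)=-y,\ \bar h(u)=-x,\ \bar\sigma(u)=-z,\ \bar\beta(u,e)=-k(e),\ \bar g(x)=x$, and for each $\alpha\in[0,1]$ and $(\phi,\psi,\xi,\gamma)\in\mathcal{L}^2\times\mathcal{L}^2\times\mathcal{L}^2_\eta\times\mathcal{L}^2$ consider
$$
\begin{cases}
X_t = X_0 + \int_0^t\bigl[\alpha b_s(U_s,\nu_s) + (1-\alpha)\bar b(U_s)+\phi_s\bigr]\,ds + \int_0^t\bigl[\alpha \sigma_s(U_s,\nu_s) + (1-\alpha)\bar\sigma(U_s)+\psi_s\bigr]\,dW_s \\
\qquad + \int_0^t\!\!\int_E\bigl[\alpha\beta_s(U_{s^-},\nu_s,e) + (1-\alpha)\bar\beta(U_s,e)+\xi_s(e)\bigr]\,\tilde\pi(ds,de),\\
Y_t = \alpha g(X_T,\mathbb{P}_{X_T}) + (1-\alpha)X_T + \int_t^T\bigl[\alpha h_s(U_s,\nu_s) + (1-\alpha)\bar h(U_s)+\gamma_s\bigr]\,ds - \int_t^T Z_s\,dW_s - \int_t^T\!\!\int_E K_s(e)\,\tilde\pi(ds,de).
\end{cases}
$$
A direct check shows that the interpolated operator satisfies $\mathcal{A}_\alpha(t,u,u',\nu)\le -(\alpha k + 1-\alpha)|x-x'|^2 - (1-\alpha)(|y-y'|^2 + \|z-z'\|^2 + |k-k'|_t^2)$, and the interpolated terminal $g_\alpha=\alpha g + (1-\alpha)\mathrm{Id}$ is monotone with constant $\alpha k'+1-\alpha\ge\min(k',1)$, so $\textbf{(H1)}$ transfers uniformly across $[0,1]$. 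At $\alpha=0$ the system is linear, coupled, and strictly monotone in $(x,y,z,k)$; it is therefore solvable by a standard Banach fixed point argument, so $0\in\mathcal{J}$.

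The decisive step is the uniform continuation lemma: there exists $\delta_0>0$, depending only on $T$, $k$, $k'$ and the Lipschitz constants, such that $\alpha_0\in\mathcal{J}$ implies $[\alpha_0,\alpha_0+\delta_0]\cap[0,1]\subseteq\mathcal{J}$. The proof is by a Picard-type fixed point: given $\alpha=\alpha_0+\delta$, define $\Phi(\widetilde U)$ as the unique solution at level $\alpha_0$ with inhomogeneities shifted by $\delta$ times the differences $(b-\bar b,\sigma-\bar\sigma,\beta-\bar\beta,h-\bar h,g-\bar g)$ evaluated at $\widetilde U$, so that fixed points of $\Phi$ are solutions at level $\alpha$. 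Applying It\^o's formula to $\Delta X_t\Delta Y_t$ for two inputs $\widetilde U,\widetilde U'$ and their images $U,U'$ exactly as in the uniqueness proof of Proposition \ref{unique H1}, the preserved monotonicity produces
$$(\alpha_0 k'+1-\alpha_0)\mathbb{E}|\Delta X_T|^2 + (1-\alpha_0)\mathbb{E}\!\int_0^T\!(|\Delta Y_s|^2+\|\Delta Z_s\|^2+|\Delta K_s|_s^2)ds + \alpha_0 k\,\mathbb{E}\!\int_0^T\!|\Delta X_s|^2 ds \le \mathbb{E}[\Delta X_T\Delta Y_T],$$
whereas the explicit $\delta$-shift on the right side of It\^o gives an upper bound of order $\delta$ in the norms of $\widetilde U-\widetilde U'$, once $\Delta Y,\Delta Z,\Delta K$ are controlled by $\Delta X$ via Lemma \ref{lemme estimation constante MBSDE}. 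Choosing $\delta$ small makes $\Phi$ a strict contraction on $\mathcal{L}^2\times\mathcal{L}^2\times\mathcal{L}^2\times\mathcal{L}^2_\eta$.

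The main obstacle is ensuring that the step size $\delta_0$ is \emph{independent of} $\alpha_0$, so that the continuation reaches $\alpha=1$ in finitely many iterations. This uniformity hinges on the lower bound $\min(\alpha_0 k+1-\alpha_0,\,\alpha_0 k'+1-\alpha_0)\ge\min(k,k',1)>0$ across $[0,1]$, together with the fact that the constants $\Theta^i,\bar\Theta^i$ of Lemma \ref{lemme estimation constante MBSDE} depend only on the structural data. Two technical points must be watched: the mean-field cross-terms enter through $\mathcal{W}_2(\nu_s,\nu'_s)$ and are absorbed via $\mathcal{W}_2^2(\nu_s,\nu'_s)\le\mathbb{E}|\Delta X_s|^2+\mathbb{E}|\Delta Y_s|^2$ combined with Young's inequality, and the jump contribution to the monotonicity enters as the $\eta$-weighted integral $\int_E(\beta-\beta')(k-k')\,\eta(ds,de)$, which forces the $\Delta K$ estimate to be taken in the $\mathcal{L}^2_\eta$-norm $|\cdot|_s$; both are handled exactly as in the proof of Proposition \ref{unique H1}, using the second bound of Lemma \ref{lemme estimation constante MBSDE} for the latter.
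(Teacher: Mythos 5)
Your continuation argument is a genuinely different route from the paper's. The paper proves existence by a Picard-type scheme in which the mean-field law is frozen at the previous iterate and small $\delta$-perturbation terms are inserted in the forward equation; each iterate is then a non-mean-field fully coupled FBSDE with jumps, solvable by the Appendix result (the jump extension of Hamad\`ene), and the monotonicity $\textbf{(H1)}$ is exploited through It\^o's formula applied to $\hat{X}^{n+1}\hat{Y}^{n+1}$ to show the iterates form a Cauchy sequence whose limit solves $\textbf{(S)}$. Your method of continuation in $\alpha$, in the spirit of Hu--Peng \cite{HUPeng95}, Peng--Wu \cite{PengWu99} and Wu \cite{zhen1999forward}, instead deforms the coefficients themselves; it treats existence and uniqueness in one stroke and avoids freezing the law, at the price of verifying that the interpolation preserves $\textbf{(H1)}$ (which you do correctly) and of securing a solvable starting point. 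Both arguments rest on the same It\^o-product/monotonicity estimate, and both need, exactly as in the paper's own proofs of Proposition \ref{unique H1} and of the existence theorem, the monotonicity constants $k,k'$ to dominate the mean-field Lipschitz constants $C^{\nu}_{\cdot}$ so that the $\mathcal{W}_{2}$ cross-terms can be absorbed; your proposal inherits this restriction through its appeal to Lemma \ref{lemme estimation constante MBSDE}.

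Two points in your write-up need repair. First, the base case: a fully coupled FBSDE on an arbitrary horizon is not solvable by ``a standard Banach fixed point argument''; plain Picard iteration on the coupled system contracts only for small $T$. For the linear monotone system at $\alpha=0$ you should instead invoke the known solvability of linear monotone FBSDEs with jumps (Wu \cite{zhen1999forward}) or the paper's Appendix Proposition applied to constant linear coefficients. Second, and more seriously, your sign conventions make the base case false as written: you write the backward equation with $+\int_t^T[\alpha h+(1-\alpha)\bar h+\gamma]\,ds$ while taking $\bar h(u)=-x$, whereas the pairing $+[h(u)-h(u')](x-x')$ in the operator $\mathcal{A}$ arises from the It\^o product only under the convention in which the backward equation carries $-\int_t^T h\,ds$ (the convention effectively used in the paper's computations). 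Taking your equations literally, the homogeneous $\alpha=0$ system admits the deterministic reduction $\dot x=-y$, $\dot y=x$, $x_0=0$, $y_T=x_T$, which has infinitely many solutions whenever $\tan T=-1$ (e.g.\ $T=3\pi/4$); hence $0\notin\mathcal{J}$ for such horizons and the continuation cannot start. The cure is notational but essential: fix one sign convention for the generator, namely the one under which It\^o's formula applied to $\Delta X\Delta Y$ produces exactly $\mathcal{A}_\alpha$, and choose the reference generator $\bar h=\pm x$ accordingly so that the reference system is the dissipative one ($\dot x=-y$, $\dot y=-x$ in the deterministic picture). With these two corrections, and with the estimate of Lemma \ref{lemme estimation constante MBSDE} restated for the interpolated system with inhomogeneities (routine, since the interpolation preserves both the Lipschitz and the monotonicity structure uniformly in $\alpha$), your uniform-step continuation does yield the theorem.
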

\begin{proof}
%\underline{\textbf{Existence:}}
In this part, in order to prove the existence,  we use an approximation scheme based on perturbations of the Forward SDEJs. Let $\delta \in ]0,1]$ and consider a sequence $(X^{n},Y^{n},Z^{n},K^{n})$ of processes defined recursively by :
$(X^{0},Y^{0},Z^{0},K^{0})=(0,0,0,0)$ and for $n\geq 1$, $U^{n}=(X^{n},Y^{n},Z^{n},K^{n})$ satisfies 
\begin{equation}\label{system 2}
{\small{
\begin{cases}
\vspace{0.2cm}
&\small{\displaystyle{X^{n+1}_{t}}=\displaystyle{X_{0}+\int_{0}^{t}[b_s(U^{n+1}_{s},\nu^{n}_{s})-\delta(Y^{n+1}_{s}-Y^{n}_{s})]ds+\int_{0}^{t}[\sigma_s(U^{n+1}_{s},\nu^{n}_{s})-\delta(Y^{n+1}_{s}-Y^{n}_{s})]dW_{s}}}\\\vspace{0.3cm}
& \qquad \hspace{0.3cm}\displaystyle{+\int_{0}^{t}\int_{E}\Big(\beta_s(U^{n+1}_{s},\nu^{n}_{s})-\delta(K^{n+1}_{s}-K^{n}_{s})\Big)\tilde{\pi}(ds,de),}\\
&\displaystyle{Y_{t}^{n+1}=g(X^{n+1}_{T},\mu^{n}_{T})-\int_{t}^{T}h_s(U^{n+1}_{s},\nu^{n}_{s})ds-\int_{t}^{T}Z^{n+1}_{s}dW_{s}-\int_{t}^{T}\int_{E}K^{n+1}_{s}(e)\tilde{\pi}(ds,de).}
\end{cases}
}}
\end{equation}
Hereafter, we shall use the following simplified notations: For $n\geq 1$, $t \in[0,T]$, we set
$$
\hat{X}^{n+1}_{t}:=X^{n+1}_{t}-X^{n}_{t},\quad \hat{Y}^{n+1}_{t}:=Y^{n+1}_{t}-Y^{n}_{t},\quad \hat{Z}^{n+1}_{t}:=Z^{n+1}_{t}-Z^{n}_{t},\quad \hat{K}^{n+1}_{t}:=K^{n+1}_{t}-K^{n}_{t}
$$ 
and for a function $\phi=\{b,h,\sigma,\beta\}$, we set 
$$
\hat{\phi}^{n+1}_{t}:=\phi(t,U^{n+1}_{t},\nu^{n}_{t})-\phi(t,U^{n}_{t},\nu^{n-1}_{t}), \quad \tilde{\phi}^{n}_{t}:=\phi(t,U^{n}_{t},\nu^{n}_{t})-\phi(t,U^{n}_{t},\nu^{n-1}_{t}).
$$
We first apply It\^o's formula to the product $\hat{ X}^{n+1}\hat{Y}^{n+1}$
\begin{align*}
\E[&\displaystyle\hat{X}_{T}^{n+1}\hat{Y}_{T}^{n+1}]=\displaystyle{\E[\int_{0}^{T}\hat{Y}^{n+1}_{s}[\hat{b}^{n+1}_{s}-\delta (\hat{Y}^{n+1}_{s}-\hat{Y}^{n}_{s})]ds+\E[\int_{0}^{T}\hat{Y}^{n+1}_{s}[\hat{\sigma}^{n+1}_{s}-\delta(\hat{Z}^{n+1}_{s}-\hat{Z}^{n}_{s})]dW_{s}]}\\
&+\displaystyle{\E[\int_{0}^{T}\int_{E}\hat{Y}^{n+1}_{s}[\hat{\beta}^{n+1}_{s}-\delta(\hat{K}^{n+1}_{s}(e)-\hat{K}^{n}_{s}(e)]\tilde{\pi}(de,ds)
+\E[\int_{0}^{T}\hat{X}^{n+1}_{s}\hat{h}^{n+1}_{s}ds]}\\
&-\displaystyle{\E[\int_{0}^{T}\hat{X}^{n+1}_{s}\hat{Z}^{n+1}_{s}dW_{s}]
-\E[\int_{0}^{T}\int_{E}\hat{X}^{n+1}_{s}\hat{K}^{n+1}_{s}(e)\tilde{\pi}(de,ds)]}\\
&+\displaystyle{\E[\int_{0}^{T}(\hat{\sigma}^{n+1}_{s}-\delta(\hat{Z}^{n+1}_{s}-\hat{Z}^{n}_{s}),\hat{Z}^{n+1}_{s})ds]+\E[\int_{0}^{T}\int_{E}\hat{K}^{n+1}_{s}(\hat{\beta}^{n+1}_{s}-\delta(\hat{K}^{n+1}_{s}-\hat{K}^{n}_{s}))\eta(de,ds)].}
\end{align*}
Using the BDG inequality, we can easily see that the stochastic integrals in the above expression is a true martingale. Hence  we obtain 
\begin{align*}
&\E[\hat{X}_{T}^{n+1}\hat{Y}_{T}^{n+1}]=\E[\int_{0}^{T}\hat{Y}^{n+1}_{s}[\hat{b}^{n+1}_{s}-\delta (\hat{Y}^{n+1}_{s}-\hat{Y}^{n}_{s})]ds+\E[\int_{0}^{T}\hat{X}^{n+1}_{s}\hat{h}^{n+1}_{s}ds]\nonumber\\
&+\E[\int_{0}^{T}(\hat{\sigma}^{n+1}_{s}-\delta(\hat{Z}^{n+1}_{s}-\hat{Z}^{n}_{s}),\hat{Z}^{n+1}_{s})ds]+\E[\int_{0}^{T}\int_{E}\hat{K}^{n+1}_{s}(\hat{\beta}^{n+1}_{s}-\delta(\hat{K}^{n+1}_{s}-\hat{K}^{n}_{s}))\eta(de,ds)].
\end{align*}
Rearranging terms, we get 
\begin{align}\label{Ito}
&\delta \E[\int_{0}^{T}\hat{Y}^{n+1}_{s}\hat{Y}^{n}_{s}ds+\int_{0}^{T}\hat{Z}^{n+1}_{s}\hat{Z}^{n}_{s}ds+\int_{0}^{T}\int_{E}\hat{K}^{n+1}_{s}\hat{K}^{n}_{s}\eta(de,ds)]=\E[\hat{X}_{T}^{n+1}\hat{Y}_{T}^{n+1}]\nonumber\\
&-\E[\int_{0}^{T}\hat{X}^{n+1}_{s}\hat{h}^{n+1}_{s}+\hat{Y}^{n+1}_{s}\hat{b}^{n+1}_{s}+\hat{Z}^{n+1}_{s}\hat{\sigma}^{n+1}_{s}]ds+\int_{0}^{T}\int_{E}\hat{K}^{n+1}_{s}\hat{\beta}^{n+1}_{s}\eta(de,ds)].\nonumber\\
&+\delta\E[\int_{0}^{T}|\hat{Y}^{n+1}_{s}|^{2}+\|\hat{Z}^{n+1}_{s}\|^{2}+|\hat{K}^{n+1}_{s}|_{s}^{2}ds].
\end{align}
Since $\hat{X}^{n+1}_{T}\hat{Y}^{n+1}_{T}=\hat{X}^{n+1}_{T}[g(X^{n+1}_{T},\mu^{n}_{T})-g(X^{n}_{T},\mu^{n-1}_{T})]$, we have from  $\textbf{(H1)}$
\begin{align*}
\hat{X}_{T}^{n+1}\hat{Y}_{T}^{n+1}&=\hat{X}^{n+1}_{T}[g(X_{T}^{n+1},\mu^{n}_{T})-g(X_{T}^{n},\mu^{n-1}_{T})]\\
&=\hat{X}^{n+1}_{T}[g(X_{T}^{n+1},\mu^{n}_{T})-g(X_{T}^{n},\mu^{n}_{T})]+\hat{X}^{n+1}_{T}[g(X_{T}^{n},\mu^{n}_{T})-g(X_{T}^{n},\mu^{n-1}_{T})]\\
&\geq k^{'}|\hat{X}^{n}_{T}|^{2}-C_{g}^{\nu}|\hat{X}^{n}_{T}|\mathcal{W}_{2}(\mu^{n}_{T},\mu^{n-1}_{T}).
\end{align*}
Using \eqref{Wasserstein} and the elementary inequality : $\forall \epsilon >0,\,  2ab\leq \epsilon^{-1}a^{2}+\epsilon b^{2}$, we obtain 
%\begin{align*}
%\hat{X}_{T}^{n+1}\hat{Y}_{T}^{n+1}\geq k^{'}|\hat{X}^{n}_{T}|^{2}-\frac{C_{g}^{\nu}\epsilon}{2}|\hat{X}^{n}_{T}|^{2}-\frac{C_{g}^{\nu}}{2\epsilon}\mathcal{W}_{2}^{2}(\mu^{n}_{T},\mu^{n-1}_{T}).
%\end{align*}
%Hence by \eqref{Wasserstein}, we have 
%\begin{equation}
%-d^{2}(\mu^{n}_{T},\mu^{n-1}_{T})\geq \E[|X^{n}_{T}-X^{n-1}_{T}|^{2}=\E[|\hat{X}^{n}_{T}|^{2}].
%\end{equation}
%Therefore 
\begin{equation}\label{estimates I}
\E[\hat{X}_{T}^{n+1}\hat{Y}_{T}^{n+1}]\geq (k^{'}-\frac{C_{g}^{\nu}\epsilon}{2})\E[|\hat{X}^{n+1}_{T}|^{2}]-\frac{C_{g}^{\nu}}{2\epsilon}\E[|\hat{X}^{n}_{T}|^{2}].
\end{equation}
In the other hand, using once again $\textbf{(H1)}$
\begin{align*}
\displaystyle&\int_{0}^{T}[\hat{X}^{n+1}_{s}\hat{h}^{n+1}_{s}+\hat{Y}^{n+1}_{s}\hat{b}^{n+1}_{s}+\hat{Z}^{n+1}_{s}\hat{\sigma}^{n+1}_{s}]ds+\int_{0}^{T}\int_{E}\hat{K}^{n+1}_{s}\hat{\beta}^{n+1}_{s}\eta(de,ds)\\
\vspace{0.2cm}
=\displaystyle&\,\int_{0}^{T}[\mathcal{A}(s,U_{s}^{n+1},U_{s}^{n},\nu^{n})+\hat{Y}^{n+1}_{s}\bar{b}^{n}_{s}+\hat{X}^{n+1}_{s}\bar{h}^{n}_{s}+\hat{Z}^{n+1}_{s}\bar{\sigma}^{n}_{s}]ds+\int_{0}^{T}\int_{E}\hat{K}^{n+1}_{s}\bar{\beta}^{n}_{s}\eta(de,ds).\\
\displaystyle\leq &-k\int_{0}^{T}| \hat{X}^{n+1}_{s}|^{2}ds+\int_{0}^{T}[C_{h}^{\nu}| \hat{X}^{n+1}_{s}|+C_{f}^{\nu}| \hat{Y}^{n+1}_{s}|+C_{\sigma}^{\nu}\| \hat{Z}^{n+1}_{s}\|+C_{\beta}^{\nu}| \hat{K}^{n+1}_{s}|_{s}]\mathcal{W}_{2}(\nu^{n},\nu^{n-1})ds.
\end{align*}
Using Young inequality : $\forall \tilde{\epsilon} >0,\,  2ab\leq \tilde{\epsilon}^{-1}a^{2}+\tilde{\epsilon} b^{2}$, we obtain 
\begin{align*}
&\int_{0}^{T}[\hat{X}^{n+1}_{s}\hat{h}^{n+1}_{s}+\hat{Y}^{n+1}_{s}\hat{b}^{n+1}_{s}+\hat{Z}^{n+1}_{s}\hat{\sigma}^{n+1}_{s}]ds+\int_{0}^{T}\int_{E}\hat{K}^{n+1}_{s}\hat{\beta}^{n+1}_{s}\eta(de,ds)\\
&\leq (\frac{ \tilde{\epsilon}C_{h}^{\nu}}{2}-k)\int_{0}^{T}| \hat{X}^{n+1}_{s}|^{2}ds+\frac{\tilde{\epsilon }}{2}\int_{0}^{T}(C_{f}^{\nu}| \hat{Y}^{n+1}_{s}|^{2}+C_{\sigma}^{\nu}\| \hat{Z}^{n+1}_{s}\|^{2}+C_{\beta}^{\nu}|\hat{K}^{n+1}_{s}|_{s}^{2})ds\\
&+\frac{C_{h}^{\nu}+C_{f}^{\nu}+C_{\sigma}^{\nu}+C_{\beta}^{\nu}}{2\tilde{\epsilon}}\mathcal{W}_{2}^{2}(\nu_{s}^{n},\nu_{s}^{n-1}).
\end{align*}
Notice that  $\mathcal{W}_{2}^{2}(\nu^{n}_{s},\nu^{n-1}_{s})\leq\E[|\hat{X}^{n}_{s}|^{2}+|\hat{Y}^{n}_{s}|^{2}]$. Hence taking the expectation we obtain 
\begin{align}\label{estimatesI-1}
&\E[\int_{0}^{T}[\hat{X}^{n+1}_{s}\hat{h}^{n+1}(s)+\hat{Y}^{n+1}_{s}\hat{b}^{n+1}(s)+\hat{Z}^{n+1}_{s}\hat{\sigma}^{n+1}(s)]ds+\int_{0}^{T}\int_{E}\hat{K}^{n+1}_{s}\hat{\beta}^{n+1}_{s}\eta(de,ds)]\nonumber\\
&\leq (\frac{ \tilde{\epsilon}C_{h}^{\nu}}{2}-k)\E[\int_{0}^{T}| \hat{X}^{n+1}_{s}|^{2}ds]+\frac{\tilde{\epsilon }}{2}\E[\int_{0}^{T}(C_{f}^{\nu}| \hat{Y}^{n+1}_{s}|^{2}+C_{\sigma}^{\nu}\| \hat{Z}^{n+1}_{s}\|^{2}+C_{\beta}^{\nu}|\hat{K}^{n+1}_{s}|_{s}^{2})ds]\nonumber\\
&+\frac{C_{h}^{\nu}+C_{f}^{\nu}+C_{\sigma}^{\nu}+C_{\beta}^{\nu}}{2\tilde{\epsilon}}\E[\int_{0}^{T}(|\hat{X}^{n}_{s}|^{2}+|\hat{Y}^{n}_{s}|^{2})ds].
\end{align}
In addition,
 \begin{align}\label{estimatesI-2}
&\E[\int_{0}^{T}(\hat{Y}^{n+1}_{s}\hat{Y}^{n}_{s}+\hat{Z}^{n+1}_{s}\hat{Z}^{n}_{s})ds+\int_{0}^{T}\int_{E}\hat{K}^{n+1}_{s}\hat{K}^{n}_{s}\,\eta(de,ds)]\nonumber\\
\leq \frac{\kappa}{2}&\E[\int_{0}^{T} |\hat{Y}^{n+1}_{s}|^{2}+\|\hat{Z}^{n+1}_{s}\|^{2}+| \hat{K}^{n+1}_{s}|_{s}^{2}ds]+\frac{1}{2\kappa}\E[\int_{0}^{T}|\hat{Y}^{n}_{s}|^{2}+\|\hat{Z}^{n}_{s}\|^{2}+| \hat{K}^{n}_{s}|_{s}^{2}ds].
\end{align}
Plugging \eqref{estimates I}, \eqref{estimatesI-1} and \eqref{estimatesI-2} in \eqref{Ito} we obtain 
\begin{align*}
&(k^{'}-\frac{C^{\nu}_{g}\epsilon}{2 })\E[|\hat{X}^{n+1}_{T}|^{2}]-\frac{C^{\nu}_{g}}{2 \epsilon}\E[|\hat{X}^{n}_{T}|^{2}]+\delta \E[\int_{0}^{T}|\hat{Y}^{n+1}_{s}|^{2}+\|\hat{Z}^{n+1}_{s}\|^{2}+|\hat{K}^{n+1}_{s}|_{s}^{2}ds]\\
&+(-\frac{\tilde{\epsilon}C^{\nu}_{h}}{2}+k) \E[\int_{0}^{T}|\hat{X}^{n+1}_{s}|^{2}ds]-\frac{\tilde{\epsilon}}{2}\E[\int_{0}^{T}C^{\nu}_{f}|\hat{Y}^{n+1}_{s}|^{2}+C^{\nu}_{\sigma}\|\hat{Z}^{n+1}_{s}\|^{2}+C^{\nu}_{\beta}|\hat{K}^{n+1}_{s}|_{s}^{2}ds]\\
&-\frac{C_{h}^{\nu}+C_{f}^{\nu}+C_{\sigma}^{\nu}+C_{\beta}^{\nu}}{2\epsilon}\E[\int_{0}^{T}|\hat{X}^{n}_{s}|^{2}+|\hat{Y}^{n}_{s}|^{2}ds]\\
&
\leq\frac{\delta \kappa}{2} \E[\int_{0}^{T}|\hat{Y}^{n+1}_{s}|^{2}+\|\hat{Z}^{n+1}_{s}\|^{2}+|\hat{K}^{n+1}_{s}|_{s}^{2}ds]+ \frac{ \delta}{2\kappa}\E[\int_{0}^{T}|\hat{Y}^{n+1}_{s}|^{2}+\|\hat{Z}^{n+1}_{s}\|^{2}+|\hat{K}^{n+1}_{s}|_{s}^{2}ds]
.
\end{align*}
Rearranging terms we get
\begin{align*}
\hspace{0.1cm}
(k^{'}-\frac{C^{\nu}_{g}\epsilon}{2 })&\E[|\hat{X}^{n+1}_{T}|^{2}]+(k-\frac{\tilde{\epsilon}C^{\nu}_{h}}{2}) \E[\int_{0}^{T}|\hat{X}^{n+1}_{s}|^{2}ds]
+(\delta-\frac{\kappa\delta}{2}-\frac{\tilde{\epsilon}C^{\nu}_{f}}{2})\E[\int_{0}^{T}|\hat{Y}^{n+1}_{s}|^{2}ds]\\
&+(\delta-\frac{\kappa\delta}{2}-\frac{\tilde{\epsilon}C^{\nu}_{\sigma}}{2})\E[\int_{0}^{T}\|\hat{Z}^{n+1}_{s}\|^{2}ds]+(\delta-\frac{\kappa\delta}{2}-\frac{\tilde{\epsilon}C^{\nu}_{\beta}}{2})\E[\int_{0}^{T}|\hat{K}^{n+1}_{s}|_{s}^{2}ds]\\
&\leq\frac{C^{\nu}_{g}}{2 \epsilon}\E[|\hat{X}^{n}_{T}|^{2}]+\frac{C_{h}^{\nu}+C_{f}^{\nu}+C_{\sigma}^{\nu}+C_{\beta}^{\nu}}{2\epsilon}\E[\int_{0}^{T}|\hat{X}^{n}_{s}|^{2}ds]\\
&+\frac{\delta}{2\kappa}E[\int_{0}^{T}\|\hat{Z}^{n}_{s}\|^{2}+|\hat{K}^{n}_{s}|_{s}^{2}ds]+(\frac{C_{h}^{\nu}+C_{f}^{\nu}+C_{\sigma}^{\nu}+C_{\beta}^{\nu}}{2\epsilon}+\frac{\delta}{2\kappa})\E[\int_{0}^{T}|\hat{Y}^{n}_{s}|^{2}ds].
\end{align*}
Setting 
\begin{equation}
 \begin{cases}
   & \gamma :=min (k^{'}-\frac{C^{\nu}_{g}\epsilon}{2 },k-\frac{\tilde{\epsilon}C^{\nu}_{h}}{2},(\delta-\frac{\kappa\delta}{2}-\frac{\tilde{\epsilon}C^{\nu}_{f}}{2}),(\delta-\frac{\kappa\delta}{2}-\frac{\tilde{\epsilon}C^{\nu}_{\sigma}}{2}),(\delta-\frac{\kappa\delta}{2}-\frac{\tilde{\epsilon}C^{\nu}_{\beta}}{2}))\\
   &\theta=max (\frac{C^{\nu}_{g}}{2 \epsilon},-\frac{C_{h}^{\nu}+C_{f}^{\nu}+C_{\sigma}^{\nu}+C_{\beta}^{\nu}}{2\epsilon}+\frac{\delta}{2\kappa}),
    \end{cases}
\end{equation}
we obtain that 
\begin{equation}
\E[|\hat{X}^{n+1}_{T}|^{2}]+\E[\int_{0}^{T}\|\hat{U}^{n+1}_{s}\|^{2}_{s}ds]\leq \frac{\theta}{\gamma}\E[|\hat{X}^{n}_{T}|^{2}]+\E[\int_{0}^{T}|\hat{U}^{n}_{s}|^{2}_{s}ds].
\end{equation}
Choosing $\tilde{\epsilon}$ and $\epsilon$ so that $\theta <\gamma$, the inequality becomes a contraction. Thus, $(\hat{X}^{n}_{T})_{n\geq 0}$ is a Cauchy sequence in $\mathcal{L}^{2}(\Omega,\P)$ and $(\hat{X}^{n})_{n\geq 0},(\hat{Y}^{n})_{n\geq 0}, (\hat{Z}^{n})_{n\geq 0}$ and $\hat{K}^{n})_{n\geq 0})$ are Cauchy sequences respectively in $\mathcal{L}^{2}([0,T],\Omega,dt\otimes d\P)$and $\mathcal{L}^{2}_{\nu}([0,T],\Omega,dt\otimes d\nu).$ Hence, if $X, Y$, $Z$ and $K$ are the respective limits of these sequences, passing to the limit in \eqref{system 2}, we see that $(X,Y,Z,K)$ is a solution of \eqref{FBSDE}.
\end{proof}
\subsection{Existence and uniqueness under $\textbf{(H2)}$}
Our second main result is the extension to the case where the datas satisfy a weaker monotonicity assumptions. We adopt here a common strategy which is the Picard approach: we construct a schema based on small perturbation. This helps us to construct the contracting maps and 
therefore deduce the existence of a unique solution of the system $\textbf{(S)}$.

Consider the following assumption 
\begin{equation*}
\textbf{(H2)}\,\,\,
\label{h2-assump}
\begin{cases} 
(i)\mbox{ There exists } k>0, \mbox{ s.t } \forall t \in[0,T], \nu\in \mathbb{M}_{1}(\mathbb{R}^{d}\times \mathbb{R}^{d}), u, u' \in \mathbb{R}^{d+d+d \times d}, \,\,\,\,\\
 \qquad \qquad \mathcal{A}(t,u,u',\nu)\leq -k(|x-x|^{2}+|y-y'|^{2}+||z-z'||^{2}+|k-k^{'}|_{t}), \mathbb{P}\mbox{-a.s.}\\
(ii)\mbox{ There exists } k'>0, \mbox{ s.t } \forall \nu\in \mathbb{M}_{1}(\mathbb{R}^{m}\times \mathbb{R}^{d}),x,x'\in \mathbb{R}^{m}\\
\qquad \qquad(g(x,\nu)-g(x',\nu)).(x-x')\leq  k'|x-x'|^{2}, \mathbb{P}\mbox{-a.s.}
\end{cases}
\end{equation*}
As for the previous section we will give some useful a priori estimates.
\begin{Lemma}\label{lemme estimation constante MBSDE}
Let $(Y^{'},Z^{'},K^{'})$ another solution  of the the system $\textbf{(S)}$. Then, under $(\textbf{H2})$ we have the following estimates 
 \begin{align*}
 & \E[\int_{0} ^{T}| \Delta{X}_{s}|^{2}ds] \leq \frac{\exp(t.\Upsilon^{1})-\Upsilon^{1}}{\Upsilon^{1}}[\Upsilon^{2}E[\int_{0}^{t}|\Delta{Y}_{s}|^{2}ds
  +\Upsilon^{3}E[\int_{0}^{t}|\Delta{Z}_{s}|^{2}ds+\Upsilon^{4}E[\int_{0}^{t}|\Delta{K}_{s}|_{s}^{2}ds],
   \end{align*}
   where
   \begin{equation}\label{constante H2}
\begin{cases}
   &\Upsilon^{1}:= 2C_{f}^{x}+C_{f}^{y}+C_{f}^{z}+C_{f}^{k}+5(C_{\sigma}^{x})^{2}+5(C_{\beta}^{x})^{2}+C_{f}^{\nu}+C_{f}^{\nu}+5(C_{\sigma}^{\nu})^{2}+5(C_{\beta}^{\nu})^{2} \nonumber \\
   &\Upsilon^{2}:=C_{f}^{y}+5(C_{\sigma}^{y})^{2}+5(C_{ \beta}^{y})^{2}+C_{f}^{\nu}+5(C_{\sigma}^{\nu})^{2}+5(C_{\beta}^{\nu})^{2}\nonumber\\
   &\Upsilon^{3}:=(C_{f}^{z}+5(C_{\sigma}^{z})^{2}+5(C_{\beta}^{x})^{2}\nonumber\\
   &\Upsilon^{4}:=(C_{f}^{k}+5(C_{\sigma}^{k})^{2}+5(C_{\beta}^{k})^{2},
   \end{cases} 
   \end{equation}
 \end{Lemma}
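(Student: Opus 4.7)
The plan is to apply Itô's formula to the squared increment $|\Delta X_t|^2$ of the forward equation, expand the right-hand side using the Lipschitz bounds on $b$ (denoted $f$ in the Assumptions), $\sigma$ and $\beta$, and close the resulting inequality via Gronwall's lemma followed by an integration in $t$. At this preliminary step \textbf{(H2)} enters only through its Lipschitz consequences; the strong monotonicity constant $-k$ will intervene later, for the uniqueness argument.

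Writing $\Delta X = X' - X$, $\Delta b_s := b(s,U'_s,\nu'_s) - b(s,U_s,\nu_s)$, and similarly $\Delta\sigma_s$, $\Delta\beta_s$, Itô's formula together with a BDG argument identical to the one used in the proof of Proposition \ref{unique H1} (thanks to the square-integrability of $\Delta X$, $\Delta\sigma$ and $\Delta\beta$) kills the stochastic integrals in expectation, leaving
\[
\E|\Delta X_t|^2 = 2\E\!\int_0^t \Delta X_s\cdot\Delta b_s\,ds + \E\!\int_0^t\|\Delta\sigma_s\|^2\,ds + \E\!\int_0^t\!\int_E|\Delta\beta_s(e)|^2\,\eta(de,ds).
\]
For the drift, the Lipschitz bound on $b$ combined with $2ab\leq a^2+b^2$ gives a self-term $2C_f^x|\Delta X_s|^2$ plus cross contributions $C_f^\star(|\Delta X_s|^2 + |\Delta\star_s|^2)$ for $\star\in\{y,z,k\}$, and $C_f^\nu(|\Delta X_s|^2 + \mathcal{W}_2^2(\nu_s,\nu'_s))$. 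For the diffusion and jump quadratic variations, the elementary inequality $(a_1+\cdots+a_5)^2 \leq 5\sum_i a_i^2$ together with the Lipschitz bounds on $\sigma$ and $\beta$ yields
\[
\|\Delta\sigma_s\|^2 + \int_E |\Delta\beta_s(e)|^2\eta(de,ds) \leq 5\!\!\sum_{\phi\in\{\sigma,\beta\}}\!\!\!\Big[(C_\phi^x)^2|\Delta X_s|^2 + (C_\phi^y)^2|\Delta Y_s|^2 + (C_\phi^z)^2\|\Delta Z_s\|^2 + (C_\phi^k)^2|\Delta K_s|_s^2 + (C_\phi^\nu)^2\mathcal{W}_2^2(\nu_s,\nu'_s)\Big].
\]
Finally, inserting \reff{Wasserstein}, namely $\mathcal{W}_2^2(\nu_s,\nu'_s) \leq \E|\Delta X_s|^2 + \E|\Delta Y_s|^2$, and using Fubini produces a linear integral inequality
\[
\E|\Delta X_t|^2 \leq \Upsilon^1\!\!\int_0^t\!\!\E|\Delta X_s|^2 ds + \Upsilon^2\!\!\int_0^t\!\!\E|\Delta Y_s|^2 ds + \Upsilon^3\!\!\int_0^t\!\!\E\|\Delta Z_s\|^2 ds + \Upsilon^4\!\!\int_0^t\!\!\E|\Delta K_s|_s^2 ds,
\]
whose constants coincide with the $\Upsilon^i$ announced in \reff{constante H2}.

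Gronwall's lemma applied to $\psi(t) := \E|\Delta X_t|^2$, treating the last three integrals as a nondecreasing forcing, then yields $\psi(t) \leq \exp(t\,\Upsilon^1)\,[\Upsilon^2\!\int_0^t\!\E|\Delta Y_s|^2 ds + \Upsilon^3\!\int_0^t\!\E\|\Delta Z_s\|^2 ds + \Upsilon^4\!\int_0^t\!\E|\Delta K_s|_s^2 ds]$, and an outer integration in $t$ over $[0,T]$ produces the announced prefactor $\int_0^T e^{t\Upsilon^1}dt = (e^{T\Upsilon^1}-1)/\Upsilon^1$ (the numerator $\exp(t\Upsilon^1)-\Upsilon^1$ in the statement appears to be a typo for $\exp(T\Upsilon^1)-1$). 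The only genuine hurdle is the bookkeeping: the constants $\Upsilon^i$ must reproduce the precise combinations in \reff{constante H2}, which forces one to track carefully the origin of every factor $2$, $5$ and of the duplicated $C_f^\nu$ (the last reflecting the symmetric splitting of the Wasserstein cross term prior to dispatching the $\mathcal{W}_2^2$ contribution into $\E|\Delta X|^2 + \E|\Delta Y|^2$). Beyond this accounting, there is no analytic obstacle.
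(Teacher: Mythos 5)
Your proposal follows essentially the same route as the paper's own proof: It\^o's formula applied to $|\Delta X|^2$, the Lipschitz bounds with Young's inequality on the drift, the factor-$5$ splitting of the squared diffusion and jump increments, the Wasserstein estimate \reff{Wasserstein}, and Gronwall's lemma to produce the constants $\Upsilon^1,\dots,\Upsilon^4$. Your reading of the prefactor as a typographical slip (the paper's Gronwall step likewise yields an exponential factor, stated there pointwise in $t$ rather than after the outer time integration) is consistent with the paper's argument, so no gap remains.
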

 \begin{proof}
Applying It\^o formula to $|\Delta{X}|^{2}$, we compute using the Lipschitz assumption
 \begin{align*}
& \E[|\Delta{X}_{t}|^{2}] \leq 2\E[\int_{0}^{t}|\Delta{X}_{s}|(C^{x}_{f}|\Delta{X_{s}}|+C_{f}^{y}| \Delta{Y}_{s}|+C_{f}^{z} \|\Delta{Z}_{s}\|+C^{k}_{f}|\Delta{K}_{s}|_{s}+C_{f}^{\nu}\mathcal{W}_{2}(\nu'_{s},\nu_{s}))ds\\
&+5\E[\int_{0}^{t}[(C^{x}_{\sigma})^{2}|\Delta{X_{s}}|^{2}+(C_{\sigma}^{y})^{2}| \Delta{Y}_{s}|^{2}+(C_{\sigma}^{z})^{2} \|\Delta{Z}_{s}\|^{2}+(C^{k}_{\sigma})^{2}|\Delta{K}_{s}|^{2}_{s}+(C_{\sigma}^{\nu})^{2}\mathcal{W}_{2}^{2}(\nu'_{s},\nu_{s})]ds\\
&+5\E[\int_{0}^{t}\left[(C^{x}_{\beta})^{2}|\Delta{X_{s}}|^{2}+(C_{\beta}^{y})^{2}| \Delta{Y}_{s}|^{2}+(C_{\beta}^{z})^{2} \|\Delta{Z}_{s}\|^{2}+(C^{k}_{\beta})^{2}|\Delta{K}_{s}|^{2}_{s}+(C_{\beta}^{\nu})^{2}\mathcal{W}_{2}(\nu'_{s},\nu_{s})^{2}\right]
  \end{align*}
  Then from Young inequality we get 
   \begin{align*}
 & \E[| \Delta{X}_{t}|^{2}] \leq\Big( 2C_{f}^{x}+C_{f}^{y}+C_{f}^{z}+C_{f}^{k}+5(C_{\sigma}^{x})^{2}+5(C_{\beta}^{x})^{2}+C_{f}^{\nu}+C_{f}^{\nu}+5(C_{\sigma}^{\nu})^{2}+5(C_{\beta}^{\nu})^{2}\Big) \E[\int_{0}^{t}|\Delta{X}_{s}|^{2}ds\\
   &+(C_{f}^{y}+5(C_{\sigma}^{y})^{2}+5(C_{ \beta}^{y})^{2}+C_{f}^{\nu}+5(C_{\sigma}^{\nu})^{2}+5(C_{\beta}^{\nu})^{2}E[\int_{0}^{t}|\Delta{Y}_{s}|^{2}ds\\
   &+(C_{f}^{z}+5(C_{\sigma}^{z})^{2}+5(C_{\beta}^{x})^{2})E[\int_{0}^{t}|\Delta{Z}_{s}|^{2}ds+[C_{f}^{k}+5(C_{\sigma}^{k})^{2}+5(C_{\beta}^{k})^{2}]E[\int_{0}^{t}|\Delta{K}_{s}|_{s}^{2}ds.
   \end{align*}
   Thus, taking $\Upsilon^{1}$,  $\Upsilon^{2}$, $\Upsilon^{3}$, and $\Upsilon^{4}$ as in \eqref{constante H2} and applying
Gronwall lemma, we obtain
  \begin{equation*}
 \E[| \Delta{X}_{t}|^{2} \leq \frac{\exp(t.\Upsilon^{1})-\Upsilon^{1}}{\Upsilon^{1}}[\Upsilon^{2}E[\int_{0}^{t}|\Delta{Y}_{s}|^{2}ds
  +\Upsilon^{3}E[\int_{0}^{t}|\Delta{Z}_{s}|^{2}ds+\Upsilon^{4}E[\int_{0}^{t}|\Delta{K}_{s}|_{s}^{2}ds]],
   \end{equation*}
which gives the desired result.
 \end{proof}
\begin{Proposition}
Under assumption (H2), there exists a unique solution $(X,Y,Z,K)$  of the FBSDE with jumps \eqref{FBSDE}.
\end{Proposition}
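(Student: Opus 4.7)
The plan is to mimic the proof of Theorem \ref{existence H1}, but to swap the roles of the forward and backward components. Under $\textbf{(H1)}$ the strong monotonicity sat on the terminal condition, and the forward SDE was perturbed so that a contraction could be extracted via $(k'-C_g^\nu)\E|\hat X^{n+1}_T|^2$. Under $\textbf{(H2)}$ the strong monotonicity now lives on the operator $\mathcal{A}$, while $g$ satisfies only a one-sided upper bound of slope $k'$. Consequently I would leave the forward SDE essentially untouched and perturb the BSDE driver, e.g.\ by adding $+\delta\,\hat X^{n+1}_s$ to $h$ at the $(n{+}1)$-th iterate, so that the perturbation now couples consecutive $X$-iterates through the backward equation.

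\textbf{Iteration and It\^o's formula.} I define a Picard scheme $(X^n,Y^n,Z^n,K^n)_{n\ge 0}$ starting from zero, where at step $n{+}1$ the quadruple is the unique solution of the decoupled McKean--Vlasov FBSDE with jumps in which the frozen measure argument is $\nu^n$, the driver carries the perturbation above, and the terminal condition is $g(X^{n+1}_T,\mu^n_T)$. Existence of each iterate follows from standard Lipschitz theory. I then apply It\^o's formula to $\hat X^{n+1}_s\hat Y^{n+1}_s$ on $[0,T]$, take expectations, and check as in Theorem \ref{existence H1} that all stochastic integrals are true martingales by BDG together with the square-integrability of $\hat X,\hat Y,\hat Z,\hat K$.

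\textbf{Extracting the contraction.} The strong monotonicity $\textbf{(H2)}(i)$ produces on the right-hand side the non-positive term
\[
-k\,\E\!\int_0^T\!\bigl(|\hat X^{n+1}_s|^2+|\hat Y^{n+1}_s|^2+\|\hat Z^{n+1}_s\|^2+|\hat K^{n+1}_s|_s^2\bigr)\,ds,
\]
while the upper bound $\textbf{(H2)}(ii)$ controls $\E[\hat X^{n+1}_T\hat Y^{n+1}_T]$ from above by $k'\,\E|\hat X^{n+1}_T|^2$ plus Wasserstein remainders that are handled by Young's inequality. The perturbation $\delta\,\hat X^{n+1}$ in the driver produces the cross-term $\delta\,\E\!\int_0^T\hat X^{n+1}_s\hat X^n_s\,ds$ which, via Young, couples consecutive iterates. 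Invoking Lemma \ref{lemme estimation constante MBSDE} to bound $\E\!\int_0^T|\hat X^{n+1}_s|^2\,ds$ in terms of the $(\hat Y^{n+1},\hat Z^{n+1},\hat K^{n+1})$-norms, and collecting the Lipschitz-in-measure contributions, I aim at an inequality of the form
\[
\E\!\int_0^T\!\bigl(|\hat X^{n+1}_s|^2+|\hat Y^{n+1}_s|^2+\|\hat Z^{n+1}_s\|^2+|\hat K^{n+1}_s|_s^2\bigr)\,ds\;\le\;\rho\,\E\!\int_0^T\!\bigl(|\hat X^{n}_s|^2+|\hat Y^{n}_s|^2+\|\hat Z^{n}_s\|^2+|\hat K^{n}_s|_s^2\bigr)\,ds,
\]
with $\rho<1$ after a careful tuning of $\delta$ and the Young parameters $\epsilon,\tilde\epsilon,\kappa$.

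\textbf{Conclusion and main obstacle.} Cauchyness of $(X^n,Y^n,Z^n,K^n)_n$ in the relevant $\mathcal{L}^2$ spaces and passage to the limit in the perturbed system then deliver a solution of $\textbf{(S)}$; uniqueness follows by applying the same It\^o computation directly to the difference of two solutions, where $\textbf{(H2)}(i)$ now forces every component of $(\Delta X,\Delta Y,\Delta Z,\Delta K)$ to vanish while $\textbf{(H2)}(ii)$ keeps the terminal contribution under control. I expect the main obstacle to be the delicate constant bookkeeping: the exponential factor $(\exp(t\,\Upsilon^1)-\Upsilon^1)/\Upsilon^1$ appearing in Lemma \ref{lemme estimation constante MBSDE} has to be controlled jointly with the Young parameters so that $\rho<1$, which is likely to require a structural smallness condition on the measure-Lipschitz constants $C_\phi^\nu$ relative to the monotonicity constants $k$ and $k'$.
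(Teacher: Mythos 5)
Your overall architecture is the one the paper itself uses: uniqueness by applying It\^o's formula to $\Delta X\,\Delta Y$ for two solutions, letting \textbf{(H2)}(i) generate coercivity in $(\Delta Y,\Delta Z,\Delta K)$, converting $\E\int_0^T|\Delta X_s|^2ds$ into backward norms through the a priori lemma, and closing with a smallness condition on the measure-Lipschitz constants; existence by a Picard scheme in which the $\delta$-perturbation migrates from the forward equation (the \textbf{(H1)} case) to the backward equation, followed by It\^o on $\hat X^{n+1}\hat Y^{n+1}$, Young's inequality, a contraction, and passage to the limit. Up to bookkeeping, this is the paper's route.

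There is, however, a genuine gap in your treatment of the terminal term. In the paper's scheme \eqref{system H2} the perturbation appears \emph{both} in the driver and in the terminal condition, $g(X^{n+1}_T,\mu^n_T)+\delta(X^{n+1}_T-X^n_T)$, and the proof then invokes (despite the inequality sign displayed in \textbf{(H2)}(ii)) the lower bound of type \eqref{born-inf}, $\E[\hat X^{n+1}_T\hat Y^{n+1}_T]\ge (k'-\tfrac{C_g^{\nu}\epsilon}{2})\E|\hat X^{n+1}_T|^2-\tfrac{C}{\epsilon}\E|\hat X^{n}_T|^2$; it is precisely this coercivity at $t=T$ (reinforced by the $\delta$ terminal perturbation) that absorbs the $\mathcal{W}_2$-remainder coming from $g$. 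You drop the terminal perturbation and propose to use \textbf{(H2)}(ii) ``from above,'' bounding $\E[\hat X^{n+1}_T\hat Y^{n+1}_T]\le k'\E|\hat X^{n+1}_T|^2+\dots$. This is the wrong direction: once \textbf{(H2)}(i) has been used to bound the integral terms, the rearranged It\^o identity requires an upper bound on $-\E[\hat X^{n+1}_T\hat Y^{n+1}_T]$, i.e.\ a \emph{lower} bound on the terminal pairing; with only the one-sided upper condition on $g$ and no terminal perturbation, that pairing can be arbitrarily negative and your contraction inequality does not close. The same issue resurfaces in your uniqueness step, where you again need $\E[\Delta X_T\,\Delta Y_T]$ bounded from below (this is how the paper gets $(k'-C_g^{\nu})\E|\Delta X_T|^2$ on the left). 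The gap is repairable within your framework — keep the terminal $\delta$-perturbation so that $\delta\E|\hat X^{n+1}_T|^2$ appears on the favourable side, or use the monotonicity of $g$ from below as the paper de facto does, or fall back on the plain Lipschitz bound for $g$ and absorb $\E|\hat X^{n+1}_T|^2$ via the forward a priori estimate at $t=T$ — but in the latter case note that the lemma you invoke is stated for two exact solutions of \textbf{(S)}; for the Picard iterates, whose forward equation carries the frozen measure $\nu^n$, the same computation produces additional remainders $\E\int_0^T(|\hat X^{n}_s|^2+|\hat Y^{n}_s|^2)ds$ that must be carried into the contraction constant.
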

\begin{proof}
Let $U=(X,Y,Z,K)$ and $U'=(X',Y',Z',K')$ be two solutions of the mean-field FBSDEJs $\textbf{(S)}$. Using the same notation as in Proposition \eqref{unique H1}, We have as proved earlier in \eqref{born-inf}
\begin{equation}
\label{borneinfeH2}
\Gamma_{T}\geq (k'-C_{g}^{\nu})\mathbb{E}[|\Delta X_{T}|^{2}].
\end{equation}
On the other hand, using $\textbf{(H2)}$ and the Lipschitz assumption, we compute
\begin{align*}
&\Gamma_{T}\leq\mathbb{E}[\displaystyle-k\int_{0}^{T}(|\Delta Y_{s}|^{2}+|\Delta Z_{s}|^{2}+|\Delta K_{s}|^{2})ds+\int_{0}^{T}[C_{h}^{\nu}+\frac{1}{2}(C_{f}^{\nu}+C_{\sigma}^{\nu}+C_{\beta}^{\nu})]|\Delta X_{s}|^{2}ds]\\
&+\E[\int_{0}^{T}[C_{f}^{\nu}+\frac{1}{2}(C_{h}^{\nu}+C_{\sigma}^{\nu}+C_{\beta}^{\nu})]|\Delta Y_{s}|^{2}ds+C_{\sigma}^{\nu}\int_{0}^{T}\|\Delta Z_{s}\|^{2}ds+C_{\beta}^{\nu}\int_{0}^{T}|\Delta K_{s}|_{s}^{2}ds].
\end{align*}
Combining \eqref{lemme estimation constante MBSDE} and   \eqref{borneinfeH2} we obtain
\begin{align*}
&(k'-C_{g}^{\nu})\E[|\Delta{X}_{T}|^{2}]+k\E[\int_{0}^{T}|\Delta Y_{s}|^{2}+|\Delta Z_{s}|^{2}+|\Delta K_{s}|_{s}^{2}ds]\\
& \leq\big[\Upsilon^{\nu}\frac{\exp(t.\Upsilon^{1})-\Upsilon^{1}}{\Upsilon^{1}}\Upsilon^{2}+(C_{f}^{\nu}+\frac{1}{2}(C_{h}^{\nu}+C_{\sigma}^{\nu}+C_{\beta}^{\nu})\big]E[\int_{0}^{t}|\Delta{Y}_{s}|^{2}]ds
  \\
  &+\big[\Upsilon^{\nu}\frac{\exp(t.\Upsilon^{1})-\Upsilon^{1}}{\Upsilon^{1}}\Upsilon^{3}+\frac{C^{\nu}_{\sigma}}{2}\big]E[\int_{0}^{t}|\Delta{Z}_{s}|^{2}ds]\\
&  +\big[\Upsilon^{\nu}\frac{\exp(t.\Upsilon^{1})-\Upsilon^{1}}{\Upsilon^{1}}\Upsilon^{4}+\frac{C^{\nu}_{\beta}}{2})\big]E[\int_{0}^{t}|\Delta{K}_{s}|_{s}^{2}ds].
\end{align*}
where $\Upsilon^{\nu}:=[C_{h}^{\nu}+\frac{1}{2}(C_{f}^{\nu}+C_{\sigma}^{\nu}+C_{\beta}^{\nu})]$.
Taking the Lipschitz constants small enough to obtain
\begin{align*}
&k> \big[\Upsilon^{\nu}\frac{\exp(t.\Upsilon^{1})-\Upsilon^{1}}{\Upsilon^{1}}\Upsilon^{2}+(C_{f}^{\nu}+\frac{1}{2}(C_{h}^{\nu}+C_{\sigma}^{\nu}+C_{\beta}^{\nu})\big]\\
&k>\big[\Upsilon^{\nu}\frac{\exp(t.\Upsilon^{1})-\Upsilon^{1}}{\Upsilon^{1}}\Upsilon^{3}+\frac{C^{\nu}_{\sigma}}{2}\big]\\
&k>\big[\Upsilon^{\nu}\frac{\exp(t.\Upsilon^{1})-\Upsilon^{1}}{\Upsilon^{1}}\Upsilon^{4}+\frac{C^{\nu}_{\beta}}{2})\big]
\end{align*} 
and $k'-C_{g}^{\nu}>0$. 
Thus we have $$(k'-C_{g}^{\nu})\E[|\Delta{X}_{T}|^{2}]+k\int_{0}^{T}(|Y_{s}'-Y_{s}|^{2}+\|Z_{s}'-Z_{s}\|^{2}+|K_{s}'-K_{s}|_{s}^{2})ds]\leq 0$$
This implies that $X'_{T}=X_{T}$ and for all $t\in[0,T]$, $X^{'}=X$, $Y'=Y$, $Z'=Z$ and $K'=K$  which gives the desired result.
\end{proof}
\begin{Theorem}
Under assumption $\textbf{(H2)}$, there exists a solution $(X,Y,Z,K)$  of the FBSDE with jumps \eqref{FBSDE}.
\end{Theorem}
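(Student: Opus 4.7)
The plan is to carry out a contraction-based approximation scheme analogous to the one used under $\textbf{(H1)}$, but adapted so as to exploit the stronger monotonicity of $\mathcal{A}$ in the full tuple $(y,z,k)$ provided by $\textbf{(H2)}$(i). Since the roles of the forward and backward monotonicities are essentially swapped from $\textbf{(H1)}$ to $\textbf{(H2)}$, the natural perturbation is now added on the backward side rather than on the forward side.

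Concretely, I would initialize $(X^0,Y^0,Z^0,K^0)=(0,0,0,0)$ and, for $\delta\in(0,1]$, define $(X^{n+1},Y^{n+1},Z^{n+1},K^{n+1})$ recursively by first solving the (decoupled) forward SDE with jumps for $X^{n+1}$ using coefficients $b_s(U^{n+1}_s,\nu^n_s)$, $\sigma_s(U^{n+1}_s,\nu^n_s)$, $\beta_s(U^{n+1}_s,\nu^n_s)$ (a standard McKean--Vlasov forward equation for fixed $\nu^n$), and then solving the perturbed BSDE with jumps
$$
Y^{n+1}_t = g(X^{n+1}_T,\mu^n_T) + \delta\hat X^{n+1}_T + \int_t^T \!\big[h_s(U^{n+1}_s,\nu^n_s) + \delta\hat X^{n+1}_s\big]\,ds - \int_t^T \!Z^{n+1}_s\,dW_s - \int_t^T\!\!\int_E K^{n+1}_s(e)\,\tilde\pi(ds,de),
$$
where $\hat X^{n+1}:=X^{n+1}-X^n$. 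Each step is a well-posed decoupled McKean--Vlasov forward SDE followed by a standard BSDE with jumps, so the iterates are well defined.

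The core computation then applies It\^o's formula to $\hat X^{n+1}_s\hat Y^{n+1}_s$ and takes expectations, exactly as in the uniqueness proof under $\textbf{(H2)}$: the monotonicity $\textbf{(H2)}$(i) yields a term $-k\,\mathbb E\int_0^T(|\hat X^{n+1}|^2+|\hat Y^{n+1}|^2+\|\hat Z^{n+1}\|^2+|\hat K^{n+1}|_s^2)\,ds$; the upper bound $\textbf{(H2)}$(ii) combined with the Lipschitz bound on $g$ handles the terminal term via Young's inequality; and the Wasserstein gap is controlled by $\mathcal{W}_2^2(\nu^n,\nu^{n-1})\leq \mathbb E|\hat X^n|^2+\mathbb E|\hat Y^n|^2$. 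Invoking the a priori estimate of the preceding lemma to dominate $\mathbb E|\hat X^{n+1}_t|^2$ and $\mathbb E\int_0^T|\hat X^{n+1}_s|^2\,ds$ by the $\mathcal{L}^2$-norms of $(\hat Y^{n+1},\hat Z^{n+1},\hat K^{n+1})$, and choosing the perturbation parameter $\delta$ together with the measure-Lipschitz constants $C_g^\nu,C_h^\nu,C_f^\nu,C_\sigma^\nu,C_\beta^\nu$ sufficiently small, one obtains a strict contraction
$$
\mathbb E|\hat X^{n+1}_T|^2+\mathbb E\!\int_0^T\!\big(|\hat Y^{n+1}_s|^2+\|\hat Z^{n+1}_s\|^2+|\hat K^{n+1}_s|_s^2\big)ds \;\leq\; \rho\Big(\mathbb E|\hat X^n_T|^2+\mathbb E\!\int_0^T\!\big(|\hat Y^n_s|^2+\|\hat Z^n_s\|^2+|\hat K^n_s|_s^2\big)ds\Big),
$$
with $\rho<1$. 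The Cauchy sequence $(X^n,Y^n,Z^n,K^n)$ then converges in $\mathcal{L}^2\times\mathcal{L}^2\times\mathcal{L}^2\times\mathcal{L}^2_\eta$, the perturbation $\delta\hat X^{n+1}$ vanishes in the limit, and passing to the limit in the perturbed system recovers a solution of $\textbf{(S)}$.

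The main obstacle is the book-keeping of constants: one must verify that the negative contribution $-k$ produced by $\textbf{(H2)}$(i) strictly dominates both the Young cross-terms originating from the Wasserstein gap and the exponentially growing prefactor $(\exp(t\,\Upsilon^1)-\Upsilon^1)/\Upsilon^1$ appearing in the a priori estimate of the lemma. This is the same smallness condition that underlies the uniqueness proof under $\textbf{(H2)}$, so once it is imposed the contraction is uniform in $n$ on the entire interval $[0,T]$, without any restriction on the time horizon.
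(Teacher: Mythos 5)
Your overall strategy is exactly the paper's: move the $\delta$-perturbation to the backward equation (terminal condition and driver perturbed by $\delta \hat X^{n+1}$), apply It\^o's formula to $\hat X^{n+1}\hat Y^{n+1}$, use $\textbf{(H2)}$, Young's inequality and the bound $\mathcal{W}_{2}^{2}(\nu^{n},\nu^{n-1})\leq \E[|\hat X^{n}|^{2}]+\E[|\hat Y^{n}|^{2}]$ to obtain a contraction, then pass to the limit. The genuine gap is your justification that the iterates are well defined. In the scheme you write (which is the paper's scheme \eqref{system H2}), the forward coefficients are evaluated at $U^{n+1}=(X^{n+1},Y^{n+1},Z^{n+1},K^{n+1})$: only the measure argument is frozen at $\nu^{n}$. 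So each iteration is \emph{not} a decoupled McKean--Vlasov forward SDE followed by a standard BSDE with jumps; it is itself a fully coupled FBSDE with jumps (without mean-field terms), and its solvability is precisely the nontrivial ingredient the paper supplies in its Appendix by extending Hamad\`ene's monotonicity-based existence result to the jump setting. Your sentence ``each step is a well-posed decoupled forward SDE followed by a standard BSDE, so the iterates are well defined'' therefore does not stand as written: you must either invoke such a non-mean-field coupled FBSDEJ existence result at every step, or genuinely decouple the scheme by evaluating the forward coefficients at $U^{n}$ --- but in that case the differences of coefficients no longer combine into the operator $\mathcal{A}$, the monotonicity $\textbf{(H2)}$(i) cannot be applied to the cross terms in the form you use, and the contraction computation you describe would have to be redone from scratch.

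A secondary, smaller point: the a priori lemma under $\textbf{(H2)}$ is stated for two solutions of the system $\textbf{(S)}$, whereas $X^{n+1}$ and $X^{n}$ solve forward equations with \emph{different} frozen laws $\nu^{n}$, $\nu^{n-1}$, so the estimate you invoke to dominate $\E[|\hat X^{n+1}_{t}|^{2}]$ and $\E\int_{0}^{T}|\hat X^{n+1}_{s}|^{2}ds$ must be re-derived with extra terms $\E[|\hat X^{n}_{s}|^{2}]+\E[|\hat Y^{n}_{s}|^{2}]$ on the right-hand side; this is harmless for the contraction but should be stated. The paper itself takes a slightly different route here: it does not use the lemma in the existence proof, but lets the $\delta$-perturbation (together with the monotonicity of $g$) produce the positive coefficients in front of $\E[|\hat X^{n+1}_{T}|^{2}]$ and $\E\int_{0}^{T}|\hat X^{n+1}_{s}|^{2}ds$ directly in the It\^o identity. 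Either bookkeeping works once the smallness conditions on $\delta$, $\epsilon$, $\alpha$ and the measure-Lipschitz constants are imposed, so apart from the well-posedness of the iterates your argument is in substance the paper's.
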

\begin{proof}
Following the same approach as in proposition \eqref{existence H1}, we use an approximation scheme based on perturbation.   However, perturbations here are made in the backward SDE with jumps. Let $\delta >0$ and consider a sequence $(X^{n},Y^{n},Z^{n},K^{n})$ of processes defined recursively by :
$(X^{0},Y^{0},Z^{0},K^{0})=(0,0,0,0)$ and for $n\geq 1$, $U^{n}=(X^{n},Y^{n},Z^{n},K^{n})$ satisfies 
\begin{equation}\label{system H2}
\begin{cases}
\vspace{0.2cm}
&X^{n+1}_{t}=X_{0}+\int_{0}^{t}b_s(U^{n+1}_{s},\nu^{n}_{s})ds+\int_{0}^{t}\sigma_s(U^{n+1}_{s},\nu^{n}_{s})dW_{s}+\int_{0}^{t}\int_{E}\beta_s(U^{n+1}_{s},\nu^{n}_{s})\tilde{\pi}(ds,de),\\
\vspace{0.1cm}
&Y_{t}^{n+1}=g(X^{n+1}_{T},\mu^{n}_{T})+\delta(X^{n+1}_{T}-X^{n}_{T})-\int_{t}^{T}\big[h_s(U^{n+1}_{s},\nu^{n}_{s})+\delta(X^{n+1}_{s}-X^{n}_{s})\big]ds\\
&\qquad\,\,\,-\int_{t}^{T}Z^{n+1}_{s}dW_{s}-\int_{t}^{T}\int_{E}K^{n+1}_{s}(e)\tilde{\pi}(ds,de).
\end{cases}
\end{equation}
with $\mu^{n}_{T}=\P_{X^{n}_{T}},\,\, \nu^{n}_{t}=\P_{(X^{n}_{t},Y^{n}_{t})}.$\\
We keep the same notation as in Theorem \eqref{existence H1}
and we apply It\^o formula to the product  $\hat{X}_{s}^{n+1}\hat{Y}_{s}^{n+1}$
%\begin{align*}
%&\displaystyle\hat{X}_{s}^{n+1}\hat{Y}_{s}^{n+1}-\hat{X}_{0}^{n+1}\hat{Y}_{0}^{n+1}=\int_{0}^{T}\hat{Y}^{n+1}_{s}\hat{b}^{n+1}_sds+\int_{0}^{T}\hat{Y}^{n+1}_{s}\hat{\sigma}^{n+1}_sdW_{s}\\
%&\displaystyle+\int_{0}^{T}\hat{X}^{n+1}_{s}[\hat{h}^{n+1}_{s}-\delta(\hat{X}^{n+1}_{s}-\hat{X}^{n}_{s})]ds-\int_{0}^{T}\hat{X}^{n+1}_{s}\hat{Z}^{n+1}_{s}dW_{s}-\int_{0}^{T}\hat{X}^{n+1}_{s}\hat{K}^{n+1}_{s}(e)\tilde{\pi}(de,ds)\\
%&\displaystyle+\int_{0}^{T}(\hat{\sigma}^{n+1}(s),\hat{Z}^{n+1}_{s})ds]+\int_{0}^{T}\int_{E}\hat{K}^{n+1}_{s}\hat{\beta}^{n+1}_{s}\eta(de,ds).
%\end{align*}
\begin{align}
&\displaystyle\hat{X}_{T}^{n+1}\hat{Y}_{T}^{n+1}-\hat{X}_{0}^{n+1}\hat{Y}_{0}^{n+1}=\int_{0}^{T}\hat{Y}^{n+1}_{s}\hat{b}^{n+1}_{s}ds+\int_{0}^{T}\hat{Y}^{n+1}_{s}\hat{\sigma}^{n+1}_{s}dW_{s}\\
&\displaystyle+\int_{0}^{T}\hat{X}^{n+1}_{s}[\hat{h}^{n+1}_{s}-\delta(\hat{X}^{n+1}_{s}-\hat{X}^{n}_{s})]ds-\int_{0}^{T}\hat{X}^{n+1}_{s}\hat{Z}^{n+1}_{s}dW_{s}-\int_{0}^{T}\hat{X}^{n+1}_{s}\hat{K}^{n+1}_{s}(e)\tilde{\pi}(de,ds)\nonumber\\
&+\displaystyle\int_{0}^{T}(\hat{\sigma}^{n+1}_{s},\hat{Z}^{n+1}_{s})ds+\int_{0}^{T}\int_{E}\hat{K}^{n+1}_{s}\hat{\beta}^{n+1}_{s}\eta(de,ds)+\int_{0}^{T}\int_{E}\hat{Y}^{n+1}_{s}\hat{\beta}^{n+1}_{s}\tilde{\pi}(de,ds).\nonumber
\end{align}
Notice that since the terminal condition is given by $\hat{Y}^{n+1}_{T}=[g(X^{n+1},\mu^{n}_{T})-g(X^{n},\mu^{n-1}_{T})]+\delta(X^{n+1}_{T}-X^{n}_{T})$ we rewrite the above equation as follows
\begin{align*}
&\E\big[\hat{X}_{s}^{n+1}(g(X_{T}^{n+1},\mu^{n}_{T})-g(X_{T}^{n},\mu^{n-1}_{T}))+\delta[|\hat{X}^{n+1}_{T}|^{2}-\hat{X}^{n+1}_{T}\hat{X}^{n}_{T}
]\big]\\
&=\E\big[\int_{0}^{T}\hat{Y}^{n+1}_{s}\hat{b}^{n+1}(s)ds+\int_{0}^{T}\hat{X}^{n+1}_{s}\hat{h}^{n+1}_{s}ds+\int_{0}^{T}(\hat{\sigma}^{n+1}(s),\hat{Z}^{n+1}_{s})ds\big]\\
&+\E\big[\int_{0}^{T}\int_{E}\hat{K}^{n+1}_{s}\hat{\beta}^{n+1}_{s}\eta(de,ds)-\delta(\int_{0}^{T}|\hat{X}^{n+1}_{s}|^{2}ds-\int_{0}^{T}\hat{X}^{n+1}_{s}\hat{X}^{n}_{s}ds)\big].
\end{align*}
Using Assumption $\textbf{(H2)}$ and Young's inequality we obtain 
\begin{align}\label{Estim H2_a}
\E\big[\hat{X}^{n+1}_{T}(g(X^{n+1}_{T},\mu^{n}_{T})-g(X_{T}^{n},\mu^{n-1}_{T}))\big]&=\E\Big[\hat{X}^{n+1}_{T}(g(X_{T}^{n+1},\mu^{n}_{T})-g(X_{T}^{n},\mu^{n}_{T}))\Big]\nonumber\\
&+\E\big[\hat{X}^{n+1}_{T}(g(X_{T}^{n},\mu^{n}_{T})-g(X_{T}^{n},\mu^{n-1}_{T}))\big]\nonumber\\
&\geq-C_{g}^{\nu}\E[|\hat{X}^{n+1}_{T}|]\mathcal{W}_{2}(\mu^{n}_{T},\mu^{n-1}_{T})+k^{'}\E|\hat{X}^{n+1}_{T}|^{2}]\nonumber\\
&\geq (k'-\frac{C_{g}^{\nu}\epsilon}{2})\E[]|\hat{X}^{n+1}_{T}|^{2}]-\frac{2 C_{g}^{\nu}}{\epsilon }\mathcal{W}_{2}^{2}(\mu^{n}_{T},\mu^{n-1}_{T})\nonumber\\
&\geq (k'-\frac{C_{g}^{\nu}\epsilon}{2})\E[|\hat{X}^{n+1}_{T}|^{2}]-\frac{2 C_{g}^{\nu}}{\epsilon }\E[|\hat{X}^{n}_{T}|^{2}].
\end{align}
Besides using the classical linearization technics to obtain 
\begin{align*}
&\E[\displaystyle\int_{0}^{T}[\hat{Y}^{n+1}_{s}\hat{b}^{n+1}_s+\hat{X}^{n+1}_{s}\hat{h}^{n+1}_{s}(\hat{\sigma}^{n+1}_s,\hat{Z}^{n+1}_{s})]ds+\int_{0}^{T}\int_{E}\hat{K}^{n+1}_{s}\hat{\beta}^{n+1}_{s}\eta(de,ds)]\\
=\displaystyle &\E[\int_{0}^{T}[\mathcal{A}(s,U_{s}^{n+1},U_{s}^{n},\nu_{s}^{n})+\hat{Y}^{n+1}_{s}\bar{b}^{n}_{s}+\hat{X}^{n+1}_{s}\bar{h}^{n}_{s}+\hat{Z}^{n+1}_{s}\bar{\sigma}^{n}_{s}]ds\\
+&\E[\int_{0}^{T}\int_{E}\hat{K}^{n+1}_{s}\bar{\beta}^{n}_{s}\eta(de,ds)].
\end{align*}
Once again Assumption $(H2)$ and young inequality  entails
\begin{align}\label{Estim H2-b}
\hspace{0.1cm}
&\displaystyle\E\big[\int_{0}^{T}[\hat{Y}^{n+1}_{s}\hat{b}^{n+1}_{s}+\hat{X}^{n+1}_{s}\hat{h}^{n+1}_{s}+(\hat{\sigma}^{n+1}_{s},\hat{Z}^{n+1}_{s})]ds+\int_{0}^{T}\int_{E}\hat{K}^{n+1}_{s}\hat{\beta}^{n+1}_{s}\eta(de,ds)\big]\nonumber\\
\hspace{0.1cm}
&\leq -k\E[\int_{0}^{T}|\hat{Y}^{n+1}_{s}|^{2}+\|\hat{Z}^{n+1}_{s}\|^{2}+|\hat{K}^{n+1}_{s}|_{s}^{2}ds]+(\frac{C_{f}^{\nu}\alpha}{2}-k)\E[\int_{0}^{T}|\hat{Y}^{n+1}_{s}|^{2}ds]\nonumber\\
\hspace{0.2cm}
&+\E[\int_{0}^{T}[C_{h}^{\nu}| \hat{X}^{n+1}_{s}|+C_{f}^{\nu}| \hat{Y}^{n+1}_{s}|+C^{\nu}_{\sigma}\| \hat{Z}^{n+1}_{s}\|+C^{\nu}_{\beta}| \hat{K}^{n+1}_{s}|_{s}]\mathcal{W}_{2}(\nu_{s}^{n},\nu_{s}^{n-1})ds]\nonumber\\
&\leq \frac{C_{h}^{\nu}\alpha}{2}\E[\int_{0}^{T}| \hat{X}^{n+1}_{s}|^{2}ds]+\frac{C_{h}^{\nu}+C_{f}^{\nu}+C_{\sigma}^{\nu}+C_{\beta}^{\nu}}{2\alpha}\E[(|\hat{X}^{n}_{s}|^{2}+|\hat{Y}^{n}_{s}|^{2})ds]\nonumber\\
&+(\frac{C_{\sigma}^{\nu}\alpha}{2}-k)\E[\int_{0}^{T}\|\hat{Z}^{n+1}_{s}\|^{2}ds]+(\frac{C_{\beta}^{\nu}\alpha}{2}-k)\E[\int_{0}^{T}|\hat{K}^{n+1}_{s}|_{s}^{2}ds].
\end{align}
Therefore, we obtain from \eqref{Estim H2_a} and  \eqref{Estim H2-b} 
\begin{align*}
&\E\big[(k'-\frac{c_{g}^{\nu}\epsilon}{2}+\frac{\delta}{2})|\hat{X}^{n+1}_{T}|^{2}+\int_{0}^{T}(- \frac{C_{h}^{\nu}\alpha}{2}+\delta-\frac{\delta\rho}{2})|\hat{X}^{n+1}_{s}|^{2}ds+\int_{0}^{T}(\frac{C_{f}^{\nu}\alpha}{2}-k)|\hat{Y}^{n+1}_{s}|^{2}ds\big]\\
&+\E\big[\int_{0}^{T}(\frac{C_{\sigma}^{\nu}\alpha}{2}-k)|\hat{Z}^{n+1}_{s}|^{2}ds]+\E[\int_{0}^{T}(\frac{C_{\beta}^{\nu}\alpha}{2}-k)|\hat{K}^{n+1}_{s}|_{s}^{2}ds\big]\\
&\leq \E\big[ (\frac{2C_{g}^{\nu}}{\epsilon}+\frac{\delta}{2})|\hat{X}^{n}_{T}|^{2}]+\int_{0}^{T}(\frac{C_{h}^{\nu}+C_{f}^{\nu}+C_{\sigma}^{\nu}+C_{\beta}^{\nu}}{2\alpha}+\frac{\delta}{2\rho})|\hat{X}^{n}_{s}|^{2}ds\big]\\
&+\E\big[\int_{0}^{T}\frac{C_{h}^{\nu}+C_{f}^{\nu}+C_{\sigma}^{\nu}+C_{\beta}^{\nu}}{2\alpha}|\hat{Y}^{n}_{s}|^{2}ds\big].
\end{align*}

Henceforth, 
\begin{align}
\tilde{\gamma}\E[|\hat{X}^{n+1}_{T}|^{2}+\int_{0}^{T}\|\hat{U}^{n+1}_{s}\|_{s}^{2}ds]\leq \tilde{\theta}\E[|\hat{X}^{n}_{T}|^{2}+\int_{0}^{T}\|\hat{U}^{n}_{s}\|_{s}^{2}ds]
\end{align}
%Setting 
%\begin{equation}
%    \tilde{\gamma} :=min (-\frac{C\alpha}{2}+k,\frac{\delta}{2}-\frac{C\epsilon}{2},-\frac{C\alpha}{2}+\frac{\delta}{2})\mbox{ and }  \theta=max(
%\end{equation}
%$\tilde{\gamma} :=min (-\frac{C\alpha}{2}+k,\frac{\delta}{2}-\frac{C\epsilon}{2},-\frac{C\alpha}{2}+\frac{\delta}{2})$ and $\theta=:max ((\frac{4C}{2\alpha}+\frac{\delta}{2})$, we obtain that 
%\begin{equation}
%\E[|\hat{X}^{n+1}_{T}|^{2}]+\E[\int_{0}^{T}\|\hat{U}^{n+1}_{s}\|^{2}_{s}ds]\leq \frac{\theta}{\gamma}\E[|\hat{X}^{n}_{s}|^{2}]+\E[\int_{0}^{T}\|\hat{U}^{n}_{s}\|^{2}_{s}ds].
%\end{equation}
Choosing $\tilde{\epsilon}$ $\alpha$ and $\epsilon$ so that $\frac{\theta}{ \gamma}<1$, the inequality becomes a contraction. Thus, $(\hat{X}^{n}_{T})_{n\geq 0}$ is a Cauchy sequence in $\mathcal{L}^{2}(\Omega,\P)$ and $(\hat{X}^{n})_{n\geq 0},(\hat{Y}^{n})_{n\geq 0}, (\hat{Z}^{n})_{n\geq 0}$ and $\hat{K}^{n})_{n\geq 0})$ are Cauchy sequences respectively in $\mathcal{L}^{2}([0,T],\Omega,dt\otimes d\P)$and $\mathcal{L}^{2}_{\eta}([0,T],\Omega,dt\otimes d\nu).$ Hence, if $X, Y$, $Z$ and $K$ are the respective limits of these sequences, passing to the limit in \eqref{system 2}, we see that $(X,Y,Z,K)$ is a solution of \eqref{FBSDE}.
\end{proof}
\section{Application: Storage problem}
\label{section 3-4}
%Let $T>0$ be a fixed time horizon and  $(\Omega, \mathcal{F},\P)$ be a complete probability space on which is defined independent Brownian motions $B^{0},B^{1},B^{r},..,B^{N}$ and $\tilde{\mu}=\mu(dt,de)-\nu(de)dt$ be is a compensated  jump process with Lévy measure $\nu$ defined over $E$. We also consider  $N$ independent identically distributed (i.i.d) random variables $x_{0}^{i}=(s_{0}^{i},q^{i}_{0})$. We assume that $\F=\left\lbrace\mathcal{F}_{t}\right\rbrace$ is the filtration defined by $\mathcal{F}_{t}=\sigma \left\lbrace s_{0}^{i},q^{i}_{0},q^{r}_{0},B^{0}_{s},B^{r}_{s},B^{i}_{s},\mu, i=1,...,N, s\leq t \right\rbrace.$
\subsection{Description of the model}
We consider a macro grid system designed to analyses energy system of $N$ nodes defined in a micro grid system called region $1,...,\Gamma$ ( power plant, electrical substations,..). We denote by $N_\gamma$ the number of nodes in group $\gamma$, so that $N = \sum_{\gamma=1}^\Gamma  N_\gamma$, and let $\pi^\gamma = N_\gamma /N$ be the ratio of the population size of region $\gamma$ to the whole population.  We shall  abusively write $i \in \gamma$ to signify that the node $i$ is in region $\gamma$.
Each node is characterized by the following:
\begin{itemize}[label=\textbullet]
\item  The capacity of the battery $S_{t}$ representing the  total {\sl energy} available in the storage device
\item The net power production of the energy (photvoltaic panels, diesel energy,..) that each nodes produces after all costs subtracted $Q_{t}$.
\end{itemize}
$-$ The power production of the energy of each nodes $i\in\left\lbrace 1,...,N\right\rbrace$ is modeled as follow \\
We denote by $Q^{i}_{t}$ the power production of a nodes $i\in\left\lbrace 1,...,N\right\rbrace $ at time $t$ is given by 
\begin{equation}
\begin{cases}
   \hspace{0.3cm} dQ_{t}^{i}=\mu^{\gamma}(t,Q^{i}_{t})dt+dM^{i}_{t}+dM^{0}_{t}\\
\hspace{0.3cm}
    Q^{r}_{0}=q^{r}_{0},
\end{cases}
\end{equation} 
where 
\begin{equation}
\begin{cases}
&dM^{i}_{t}=\sigma^{\gamma}(t,Q^{i}_{t})dB^{i}_{t}+\int_{E}\beta^{\gamma}(t,e,Q^{i}_{t})\tilde{N}^{i}(dt,de)\nonumber\\
&dM^{0}_{t}=\sigma^{\gamma^{0}}(t,Q^{0}_{t})dB^{0}_{t}+\int_{E}\beta^{\gamma^{0}}(t,e,Q^{0}_{t})\tilde{N}^{0}(dt,de)\nonumber
\end{cases}
\end{equation}
%Where $B^{i}=(B^{i}_{t})_{0\leq t\leq T}$ for $i=0,...,N$ and $\sigma, \mu$ are a given functions.\\
We formally consider a complete filtered probability space  $(\Omega, \mathcal{F},\P)$ on which is defined an independent stochastic processes
\begin{itemize}[label=\textbullet]
\item A standard  $B^{0},B^{1},B^{r},..,B^{N}$ Brownian motion.
\item A Poisson point process $p$ defined on $[0,T]\times\Omega\times\R|\left\lbrace 0\right\rbrace$ and $\tilde{N}_{p}$ as the associated counting measure such that 
$\hat{N}(de,dt)=n_{p}(de)dt$. We also suppose that the predictable measure $n_{p}(dx)$ is positive, finite and satisfies the following integrability condition
\begin{equation}
 \int_{\R| \left\lbrace 0\right\rbrace}(1\wedge |e|)^{2}n (de)<\infty.
\end{equation} 
\item We also consider  $N$ independent identically distributed (i.i.d) random variables $x_{0}^{i}=(s_{0}^{i},q^{i}_{0})$.
\end{itemize}
 We denote by $\F=\{\Fc_t\}$ the filtration defined by\\
 $$\hspace{0.7cm} \Fc_t = \sigma((s^i_0,  q^0_0, q^i_0), B^0_s, B^i_s, N^{0}, N^{i} \mbox{ where }  i=1, \cdots N, ~s \le t\}$$ and  by  
 $$ \hspace{0.7cm}  \Fc^0_t = \sigma( B^0_s, ~s \le t\} \mbox{ the filtration generated by } B^{0} \mbox{ and } N^{0}.$$
$-$ The Battery level $S^{i}_{t}$ in the region $\gamma$ of the node $(i)$ is controlled through a storage action  $\alpha^{\gamma,i} \in \Ac$ according  to
%	%\b*%\label{eq: node i S}
%	%S^{\gamma,i}_t&=& s^{\gamma,i}_0 + \int_0^t \alpha^{\gamma, i}_s ds, 
%	%\e*
%	\b*%\label{eq: node i S}
%	S^{i}_t&=& s^{i}_0 + \int_0^t \alpha^{i}_s ds, 
%	\e*
\begin{equation}
\label{FSDE}
\begin{cases}
S_{t}^{i}=S_{0}^{i}+\int_{0}^{t}\alpha^{i}_{s}ds\\
0\leq S_{t}^{i}\leq S_{\max}
\end{cases}
\end{equation}
In contrast with the paper of Alasseur et al \cite{clemence2017grid}, we assume that the production of energy is unpredictable. This is due to its dependence on environmental conditions such as the sun, the speed of the wind etc. which are intermittent and irregular.\\
Let's point out that, including the jumps component is essential in our analyses. In fact when the production of energy is perturbed or sudden slowdown ( winds ) , the other region can compensate this variability by optimizing the balance between production and consumption and hence stone excess electricity from a nodes to another and therefore avoiding a later expensive production . For this reason we assume that there is an exchange between the regions.\\
In fact the quantity $Q^{i}-\alpha^{i}$ is the amount of energy (excesses or bor )that the nodes can exchange with other nodes. 
We will also assume that the storage level will be enforced by a constraints. In other words, we assume that there is a maximal level for which the battery can support. $S_{max}$ is the battery's maximum instantaneous power output.\\
%\begin{Remark}
%As explained above, in this model we assumed that the battery has maximum fixed level $S_{max}$. However, it can be very useful to analyses the qualitative behavior of the battery. In fact there exist many type of storage: high capacity energy storage, intermediary storage and lower energy storage. The type of the battery is closely related to the number of nodes in the grid. 
%\emph{What is the optimal size of the battery we should  add to storage system? It is worth to pay for bigger battery?  }  .\\
%In our model we do not enforce constraints on the storage level nor on the injection/withdrawal rates. Indeed, we give priority to finding explicit solutions to our problem in order to analyse the qualitative behavior of the system.  In the numerical examples we considered  we were able to obtain reasonable interpretations and results despite this limitation on the modeling of the storages. 
%\end{Remark}
%We can also assume that the $S$ is forced to be in a closed convex set.\\
As in \cite{clemence2017grid}, we include a micro grid system indexed by $0$ called the "rest of the world",  which is characterized by one state variable, its    {\sl local net power production}   $Q^0_t$, and which  does not possess any storage. 
 The net production of the rest of the world is given by 
\begin{equation}
\begin{cases}
&dQ^0_t=\mu^0(t,Q^0_t) dt  + \sigma^{0}(t,Q^0_t) dB^0_t+\int_{E}\beta^{0}(t,e,Q^{0}_{t})\tilde{N}^{0}(dt,de)\\
&Q^0_0 = q^0_0.
\end{cases}
\end{equation}
\\
\underline{\emph{Electricity spot price}}\\
 We make the assumption that the electricity price per Watt-hour depends on the instantaneous demand. When the strategy   $\alpha= (\alpha^1, \cdots,\alpha^N) \in    \Ac^N$ is implemented the spot price is given by
\begin{equation}
P^{N,\alpha}_t= p\left(-  Q^0_t -  \sum_{i=1}^N \eta (Q^i_t - \alpha^i_t)\right),
\end{equation}
where $p(\cdot)$ is the exogenous inverse demand function for electricity 
and $\eta$ is a scaling parameter which weights the contribution of each individual node $i$ to the whole system. \\ 
We assume that $p$ is non deceasing function 
The inverse demand function is usually taken 
Since the energy model is modeled through a macro grid system of a large number of nodes. we are naturally leads to express our model in term of mean field problem . Hence, we will consider the following spot price \begin{equation}
P^{N,\alpha}_t= p\left( - Q^0_t -  \sum_{i=1}^N  \frac{1}{N}(Q^i_t - \alpha^i_t)\right).
\end{equation}
where  $\frac{1}{N} \sum_{i=1}^N (Q^i_t - \alpha^i_t)$
 is the  averaged net injections and  $\eta = 1/N$.\\
 \\
 \underline{\emph{The control problem}}\\
We consider  a finite time horizon $T>0$.
An admissible control  $\alpha = (\alpha^1, \cdots, \alpha^N)$is defined to be a square integrable $(\mathcal{F}_{t})_{0\leq t\leq T}$-adapted process with values in $\R^{m}$. Any nodes $i$ controls it level of reserve through $\alpha^{i}$ in order to minimize the following cost function. 
\begin{equation}
J^{i,\gamma,N}(\alpha)=\E[
	\int_0^T P^{N,\alpha}_t.\left( \alpha^i_t - Q^i_t\right) + L^\gamma_T(Q^{i}_t, \alpha^i_t)+ L_S(S^{i, \alpha^i}_t, \alpha^i_t)dt + g(S^{i, \alpha^i}_T)]
\end{equation}
The cost function $J$ depend on the revenue of the storage ( bought, or rented) and  also  on the total cost of the system. \\
$\bullet$ $L_{S}$ represent the running storage cost as investment in capacities , operation and maintenance. \\
$\bullet$ $L_{T}$ is the volumetric charge. This electricity cost is closely related to the power that system requires in peak hours and hence produce enough power to satisfy highest level of peak demand. \\
\underline{\emph{The rest of the world}  }
The rest of the world incurs  only  energy and transmission costs 
\begin{equation}
	J^{0,N}(\alpha)=\E\left[
	\int_0^T - P^{N,\alpha}_t . Q^0_t+L^0_T(Q^{0}_t, 0)dt\right]
		\end{equation}

\subsection{Reformulation: Mean field type control problem}
In this section we consider  on the filtered probability space $(\Omega, \Fc, \P, \F)$,  $\Gamma$ standard brownian motions $B^\gamma, \gamma =1, \cdots, \Gamma$ which are mutually independent and independent from the Brownian filtration $\F^0$.
 We shall use the following notation. If $\xi = \{\xi_t \}$ is an $\F$-adapted process, then $\bar \xi = \{ \bar \xi_t\}$ denotes the process defined by :~$\bar \xi_t := \E[\xi_t | \Fc^0_t]$.
\bigskip

Let $x_0=(s_0, q_0) = \left( x_0^{\gamma}=(s_0^{\gamma}, q_0^{\gamma})\right)_{ 1\le {\gamma} \le \Gamma}$ be a random vector which is independent from $\F^0$. Let $Q^0$ and $Q^\gamma$ be the processes defined by
\begin{align}
\label{jump-q}
\vspace{0.1cm}
&\displaystyle Q^{\gamma}= q_0^{\gamma}+\int_0^t\mu^{\gamma}(u,Q_{u}^{\gamma}) du  +\int_0^t   \sigma^{\gamma}(u,Q_{u}^{\gamma}) dB^{\gamma}_u +\int_0^t   \sigma^{{\gamma},0}(u,Q_{u}^{\gamma}) dB^0_u\nonumber\\
\vspace{0.2cm}
&\qquad\,\,\quad+\int_{0}^{t}\int_{E}\beta^{\gamma}(u,e,Q^{\gamma}_{u})\tilde{N}(du,de)+\int_{0}^{t}\int_{E}\beta^{\gamma^{0}}(u,e,Q^{0}_{u})\tilde{N}^{0}(du,de)\nonumber
\\
&Q^0_t=q_0^{0}+  \int_0^t \mu^r(u,Q^0_t) du +  \int_0^t \sigma^{0}(u,Q^0_u) dB^0_u+\int_{0}^{t}\int_{E}\beta^{\gamma}(u,e,Q^{0}_{u})\tilde{N}^{0}(du,de).
	\end{align}

If  $\bar \nu =  (\bar \nu^{1}, \cdots, \bar \nu^{\Gamma})$
 is an $\F^0$-adapted   $\R^\Gamma$-valued process, we denote
 	\begin{equation}
	\label{price:MFC}
	P^{\bar \nu}_t=	p\left(-  Q^0_t   - \sum_{{\gamma} \in \Gamma}\pi^{\gamma}\left(\E[ Q^{{\gamma}}_t | \Fc^0_t] - \bar \nu^{{\gamma}}_t \right)\right).
        \end{equation}

We now consider the following cost functions,  for any control process $\alpha=(\alpha^1, \cdots, \alpha^\Gamma)$ and for each $\gamma =1, \cdots, \Gamma$, 
	\begin{equation}
	 J^{\gamma}_{x_0}(\alpha^\gamma, \bar{ \nu})=
	\E\int_0^T \left[ P^{\bar \nu}_t (\alpha^\gamma_t - Q^\gamma_t) + 
	L^\gamma_T(Q^\gamma_t, \alpha^\gamma_t) + L_S(S^\gamma_t, \alpha^\gamma_t)\right]dt + \E\left[ g(S^\gamma_t)\right].
	\end{equation}
	\begin{equation}
	J^{C}_{x_0}(\alpha)=\E\int_0^T [-P^{\bar{\alpha}}_t Q^0_t +L^0_T(Q^0_t, 0 )]dt	+ \sum_{\gamma=1}^\Gamma \pi^\gamma J_{x_0}^\gamma(\alpha^\gamma, \bar \alpha_t).
	\end{equation}
	where
	$S^{{\gamma}}_t = s_0^{\gamma} + \int_0^t \alpha^\gamma_u du.$

%where, for each $\gamma = 1, \cdots, \Gamma$ 
%	\be
%	\label{eq: fsde}
%	S^{{\gamma}}_t &=& s_0^{\gamma} + \int_0^t \alpha^\gamma_u du
%	\ee 

Following \cite{clemence2017grid}, we characterize the solution and the control process $\alpha$ for a fixed mean field control $\bar\nu$ and prove that if the conditional expectation of the control process  $\alpha$ is equal to $ \bar \nu $ then necessary we have the Nash equilibrium . \\

\begin{Definition}[Mean field Nash equilibrium]
Let $x_0=(s_0,q_0)$ be  a random vector independent from $\F^0$. We say that $\alpha^\star = \{\alpha^{\gamma,\star},1\le {\gamma} \le \Gamma\}$ is a mean field Nash equilibrium   if,   for each $\gamma$, $ \alpha^{\gamma,\star}$ minimizes the function
$\alpha^\gamma \mapsto J^{\gamma}_{x_0}(\alpha^\gamma, \{ \E[\alpha^\star_t | \Fc^0_t]\})$.
\end{Definition}

\begin{Definition}[Mean field optimal control]
Let $x_0=(s_0,q_0)$ be  a random vector independent from $\F^0$. We say that $\hat \alpha= \{\hat \alpha^{\gamma},1\le {\gamma} \le \Gamma\}$ is a mean field optimal control   if,    $ \hat \alpha$ minimizes the function
$\alpha \mapsto J^{C}_{x_0}(\alpha)$.
\end{Definition}
\underline{\textbf{Characterization of  mean field Nash equilibrium}}
\begin{Proposition}
Let $\bar \nu$ be a given $\F^0$-adapted   $\R^\Gamma$-valued process. \\
Then there exists a unique  control $ (\alpha^{1,\star}, \cdots, \alpha^{\Gamma, \star}) = \alpha^\star(\bar \nu, x_0)$ such that\\
$\bullet$ For each $\gamma\in 1,.., \Gamma $, $\alpha^{\gamma,\star}$ minimizes the function   $\alpha^\gamma \mapsto J^{\gamma}_{x_0}(\alpha^\gamma, \bar \nu)$.\\
$\bullet$ If $(S^{\gamma,\star}, Q^\gamma)$  is the state process  corresponding to the initial data condition $x^\gamma_0$, to  the control  $\alpha^{\gamma,\star}$,  and to the dynamic above, then there exists a unique adapted solution \\$(Y^{\gamma, \star}, Z^{0,\gamma, \star}, Z^{\gamma, \star}, V^{\gamma, \star})$ of the BDSE with jumps
	\begin{align}
	\label{nash-FBSDE}
Y^{\gamma, \star}_t&=\partial_s g(S^{\gamma,\star}_T) +\int_{0}^{T}\partial _s L_S(S^{\gamma,\star}_t, \alpha^{\gamma,\star}_t)dt + \int_{0}^{T}  Z^{0,\gamma, \star}_t dB^0_t + Z^{\gamma,\star}_t dB^\gamma_t
\nonumber\\&+\int_{0}^{T}\int_{E}V^{\gamma, \star}(e)\tilde{N}(dt,de)+\int_{0}^{T}\int_{E}V^{0,\gamma, \star}(e)\tilde{N}^{0}(dt,de)
\end{align}
satisfying the coupling condition
	\begin{equation}\label{nash coupling condition}
	Y^{\gamma,\star}_t + 
	P^{\bar \nu}_t +
	\partial_\alpha L^\gamma_T(Q^\gamma_t,\alpha^{\gamma,\star}_t) + \partial_\alpha L_S(S^{\gamma,\star}_t,\alpha^{\gamma,\star}_t)=0
	\end{equation}
Conversely, assume that there exists $(\alpha^{\gamma,\star}, S^{\gamma,\star}, Y^{\gamma, \star}, Z^{0,\gamma,\star},Z^{\gamma,\star}, V^{\gamma, \star})$ which satisfy the coupling condition  \eqref{nash coupling condition} as well as the FBSDEJ, then $ \alpha^{\gamma,\star}$ is the optimal control minimizing $J^{\gamma}_{x_0}(\alpha^\gamma, \bar \nu)$ and $S^{\gamma,\star}$ is the optimal trajectory.\\
$\bullet$ If in addition:$\forall \gamma =1, \cdots, \Gamma$ $\E\left[  \alpha^{\gamma,\star}_t| \Fc^0_t\right] = \bar \nu^{\gamma,0}_t,$ then $ \alpha^\star$ is a mean field Nash equilibrium.

\end{Proposition}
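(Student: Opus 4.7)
The plan is to treat the minimization of $J^\gamma_{x_0}(\cdot, \bar \nu)$, for each fixed $\gamma$ and fixed $\F^0$-adapted $\bar \nu$, as a standard stochastic control problem and to apply the Pontryagin stochastic maximum principle with jumps. Once $\bar \nu$ is frozen, the price $P^{\bar\nu}$ and the exogenous processes $Q^\gamma, Q^0$ no longer depend on $\alpha^\gamma$, and the controlled state is simply $S^{\gamma}$ with the affine dynamics $dS^\gamma_t = \alpha^\gamma_t \, dt$, $S^\gamma_0 = s^\gamma_0$. Under the standing convexity and Lipschitz-smoothness assumptions on $L^\gamma_T(q,\cdot)$, $(s,a)\mapsto L_S(s,a)$, and $g$, the map $\alpha^\gamma \mapsto J^\gamma_{x_0}(\alpha^\gamma, \bar\nu)$ is strictly convex, yielding both existence and uniqueness of a minimizer.

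I introduce the Hamiltonian
\begin{equation*}
H^\gamma(t, s, q, a, y) := P^{\bar\nu}_t (a - q) + L^\gamma_T(q, a) + L_S(s, a) + y\, a,
\end{equation*}
and define the adjoint quintuple $(Y^{\gamma,\star}, Z^{0,\gamma,\star}, Z^{\gamma,\star}, V^{\gamma,\star}, V^{0,\gamma,\star})$ as the unique $\F$-adapted solution of the BSDE with jumps having driver $\partial_s H^\gamma = \partial_s L_S$ and terminal condition $\partial_s g(S^{\gamma,\star}_T)$. Since $\bar\nu$ is frozen and $S^{\gamma,\star}, Q^\gamma$ are already determined $\F$-adapted processes, this is a classical Lipschitz BSDE driven by $(B^0, B^\gamma, \tilde N, \tilde N^0)$, so well-posedness in the appropriate $\Lc^2$ and $\Lc^2_\eta$ spaces is standard (Tang--Li type theory).

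The core step is a convex variational argument. For an admissible perturbation $\beta$, let $\delta S^\gamma$ be the Gateaux derivative of the state, so that $d(\delta S^\gamma)_t = \beta_t\, dt$ with $\delta S^\gamma_0 = 0$. Applying It\^o's formula to $Y^{\gamma,\star} \, \delta S^\gamma$, using the BSDE satisfied by $Y^{\gamma,\star}$ together with the bounded-variation dynamics of $\delta S^\gamma$, and taking expectations to eliminate the martingale parts, I get
\begin{equation*}
\E\!\left[\partial_s g(S^{\gamma,\star}_T)\, \delta S^\gamma_T\right] + \E\!\int_0^T \partial_s L_S(S^{\gamma,\star}_t,\alpha^{\gamma,\star}_t)\, \delta S^\gamma_t\, dt \;=\; \E\!\int_0^T Y^{\gamma,\star}_t \beta_t \, dt,
\end{equation*}
and therefore
\begin{equation*}
\left.\frac{d}{d\varepsilon} J^\gamma_{x_0}(\alpha^{\gamma,\star} + \varepsilon \beta, \bar\nu)\right|_{\varepsilon=0} = \E\!\int_0^T \partial_a H^\gamma\!\left(t, S^{\gamma,\star}_t, Q^\gamma_t, \alpha^{\gamma,\star}_t, Y^{\gamma,\star}_t\right)\, \beta_t\, dt.
\end{equation*}
Optimality at $\alpha^{\gamma,\star}$ forces $\partial_a H^\gamma = 0$ a.e., which is exactly the coupling condition \eqref{nash coupling condition}. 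Strict convexity of $a\mapsto H^\gamma(t,s,q,a,y)$ then yields both uniqueness of $\alpha^{\gamma,\star}$ and the sufficiency direction: any tuple satisfying the FBSDEJ together with the coupling condition produces $J^\gamma_{x_0}(\alpha^\gamma, \bar\nu) - J^\gamma_{x_0}(\alpha^{\gamma,\star}, \bar\nu) \geq 0$ for every admissible $\alpha^\gamma$ by the same computation combined with convexity of $H^\gamma$ in $(s,a)$ and of $g$.

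The Nash equilibrium statement is then immediate from the definition: if each $\alpha^{\gamma,\star}$ minimizes $J^\gamma_{x_0}(\cdot, \bar\nu)$ for $\bar\nu$ satisfying $\bar\nu^{\gamma,0}_t = \E[\alpha^{\gamma,\star}_t \mid \Fc^0_t]$, no unilateral deviation can be beneficial, so $\alpha^\star$ is a mean field Nash equilibrium. The main technical obstacle will be twofold: first, rigorously handling the state constraint $0 \leq S^\gamma_t \leq S_{\max}$, which either forces Lagrange multipliers into the Hamiltonian (a KKT extension of the SMP) or requires an a posteriori argument that the unconstrained optimum stays in $[0, S_{\max}]$; second, justifying the martingale cancellation in the duality identity when both $\tilde N$ and $\tilde N^0$ appear, which relies on the square-integrability of $V^{\gamma,\star}, V^{0,\gamma,\star}$ against $\eta$ obtained from the BSDE well-posedness.
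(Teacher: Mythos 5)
Your proposal is correct and follows essentially the same route as the paper: the paper also freezes $\bar\nu$, computes the Gateaux (Euler--Lagrange) derivative of $\alpha^\gamma \mapsto J^\gamma_{x_0}(\alpha^\gamma,\bar\nu)$, and applies It\^o's formula to the product of the adjoint process $Y^{\gamma,\star}$ with the perturbed state $\tilde S^\beta$ (your $\delta S^\gamma$) to turn the $\partial_s L_S$ and $\partial_s g$ terms into $\E\int_0^T Y^{\gamma,\star}_s\beta_s\,ds$, which gives exactly the coupling condition \eqref{nash coupling condition}. Your Hamiltonian/maximum-principle phrasing and the explicit convexity argument for uniqueness and for the sufficiency (converse) direction merely make precise what the paper asserts without detail, and your remarks on the constraint $0\leq S^\gamma_t\leq S_{\max}$ and on the martingale cancellation flag points the paper likewise leaves implicit.
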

\begin{proof}
$\bullet$ Since the dynamic programming principal does not work in this context,  we derive the optimality condition directly from the Euler Lagrange equation 
\begin{equation}
d_\beta J^{\gamma}_{x_0}( .,\bar \nu):=0
\end{equation}
We start by computing the functional directional derivative of $J^{\gamma}_{x_0}( .,\bar \nu)$
\begin{align}
d_\beta J^{\gamma}_{x_0}( .,\bar \nu)&=\E\left[\int_0^T [P^{\bar \nu}_u +\partial_\alpha L^\gamma_T(Q^\gamma_u,\alpha^\gamma_u) + \partial_\alpha L_S(S^\gamma_u,\alpha^\gamma_u)+\partial_s L_{S}(S^{\gamma}_u, \alpha^\gamma_u) ]\beta_u  du \right]\\
&+ \E\left[\tilde S^\beta_T \partial_s g(S^{\gamma}_T)\right]
	\end{align}
Hence, 	there exists a unique optimal control $\alpha^{\gamma,\star} =\alpha^{\gamma,\star}(\bar \nu, x_0)$ such that 
	\begin{equation}
\E\left[\int_0^T [P^{\bar \nu}_u +\partial_\alpha L^\gamma_T(Q^\gamma_u,\alpha^\gamma_u) + \partial_\alpha L_S(S^\gamma_u,\alpha^\gamma_u)+\partial_s L_{S}(S^{\gamma}_u, \alpha^\gamma_u) ]\beta_u  du \right]
+ \E\left[\tilde S^\beta_T \partial_s g(S^{\gamma}_T)\right]=0
	\end{equation}
We denote by $S^{\gamma,\star}$ the optimal trajectory associated to  $\alpha^{\gamma,\star}$. 
Applying It\^o tanaka formula to $S^\beta_t Y^{\gamma,\star}_t$ we get 
\begin{align*}
\tilde S^\beta_t Y^{\gamma,\star}_t& = \tilde S^\beta_T Y^{\gamma,\star}_T+\int_{t}^{T} Y^{\gamma,\star}_s \beta_s ds -\tilde S^\beta_s f()ds\\
&+\int_{0}^{T}  \tilde S^\beta_sZ^{0,\gamma, \star}_s dB^0_s + \int_{t}^{T} \tilde S^\beta_s Z^{\gamma,\star}_s dB^\gamma_s+\int_{0}^{T}\int_{E}\tilde S^\beta_s V_{s}^{\gamma, \star}(e)\tilde{N_{p}}(dt,de).
 \end{align*}
 Taking the conditional expectation of the above the equation , we obtain 
 \begin{equation}
 \E\left[ \tilde S^\beta_T Y^{\gamma,\star}_T\right]=\E[\int_{t}^{T} Y^{\gamma,\star}_s \beta_s ds -\tilde S^\beta_s f(S^{\gamma,\star}_t, \alpha^{\gamma,\star}_t)ds].
 \end{equation}
 Using the above Euler optimality condition yield's to 

\begin{equation}
	\E\left[\int_0^T 
	\left( Y^{\gamma,\star}_s +P^{\bar \nu}_s +\partial_\alpha L^\gamma_T(Q^\gamma_s,\alpha^{\gamma,\star}_s) + \partial_\alpha L_S(S^{\gamma,\star}_s,\alpha^{\gamma,\star}_s)
	\right) \beta_sds
	\right]=0.
	\end{equation}
	$\bullet$ If $(\alpha^{\gamma,\star},S^{\gamma,\star}, Y^{\gamma, \star}, Z^{0,\gamma,\star},Z^{\gamma,\star},V^{\gamma,\star})$ is a solution of the following coupled Forward-Backward SDE with jumps. 
	\begin{equation}
	    \begin{cases}
	    \vspace{0.1cm}
	   & Y^{\gamma, \star}_t=\partial_s g(S^{\gamma,\star}_T) +\int_{0}^{T}\partial _s L_S(S^{\gamma,\star}_t, \alpha^{\gamma,\star}_t)dt + \int_{0}^{T}  Z^{0,\gamma, \star}_t dB^0_t + Z^{\gamma,\star}_t dB^\gamma_t\\
	   \vspace{0.2cm}
	   &\qquad+\int_{0}^{T}\int_{E}V^{\gamma, \star}(e)\tilde{N}(dt,de)+\int_{0}^{T}\int_{E}V^{0,\gamma, \star}(e)\tilde{N}^{0}(dt,de).\\
	   \vspace{0.1cm}
	&Q^{\gamma}= q_0^{\gamma}  + \int_0^t  
	\mu^{\gamma}(u,Q^{\gamma}) du  +\int_0^t   \sigma^{\gamma}(u,Q^{\gamma}) dB^{\gamma}_u +\int_0^t   \sigma^{{\gamma},0}(u,Q^{\gamma}) dB^0_u\\
	\vspace{0.2cm}
	&\qquad+\int_{0}^{t}\int_{E}\beta^{\gamma}(u,e,Q^{\gamma}_{u})\tilde{N}(du,de)+\int_{0}^{t}\int_{E}\beta^{\gamma,0}(u,e,Q^{\gamma, 0}_{u})\tilde{N}^{0}(du,de)\\
	&S^{{\gamma}}_t = s_0^{\gamma} + \int_0^t \alpha^\gamma_u du
	    \end{cases}
	\end{equation}
	$$ \hbox{ with }Y^{\gamma,\star}_t + 
	P^{\bar \nu}_t +
	\partial_\alpha L^\gamma_T(Q^\gamma_t,\alpha^{\gamma,\star}_t) + \partial_\alpha L_S(S^{\gamma,\star}_t,\alpha^{\gamma,\star}_t)=0.$$
	Then $ \alpha^{\gamma,\star}\in \argmin J^{\gamma}_{x_0}(\alpha^\gamma, \bar \nu)$ with $S^{\gamma,\star}$ is the optimal trajectory. and Finally if $\forall \gamma =1, \cdots, \Gamma,$
 $E\left[  \alpha^{\gamma,\star}_t| \Fc^0_t\right] = \bar \nu^{\gamma,0}_t$
then $ \alpha^\star$ is a mean field nash equilibrium.\\
	\end{proof}
\underline{\textbf{Characterization of  mean field optimal controls}}
\begin{Proposition}\label{prop: charac mfc} 
Assume that $\hat \alpha=(\hat \alpha^1, \cdots, \hat \alpha^\Gamma)$  minimizes the functional $J^{C}_{x_0}(\alpha)$, and denote by  $\hat S = (\hat S^\1, \cdots, \hat S^\Gamma)$ is the corresponding controlled trajectory.  Then there exists a unique adapted solution $(\hat Y = (\hat Y^1, \cdots \hat Y^\Gamma_t),\hat Z = (\hat Z^1, \cdots, \hat Z^\Gamma),\hat Z^0 = (\hat Z^{0,1}, \cdots, \hat Z^{0,\Gamma}))$ of the BDSE
	\be\label{eq: bsde mfc}
	\left\{
	\begin{array}{l l l}
d\hat Y^\gamma_t&=& - \partial_s L_S(\hat S^\gamma_t, \hat \alpha^\gamma_t) dt  +  \hat Z^{0,\gamma}_t dB^0_t + \hat Z^\gamma_t dB^\gamma_t\int_{E}V^{\gamma }(e)\tilde{N}(ds,de)+\int_{E}V^{0,\gamma }(e)\tilde{N}^{0}(ds,de)\\
	\hat Y^\gamma_T&=&\partial_sg(\hat S^\gamma_T)
	\end{array}
	\right.
	\ee 
satisfying the coupling condition:~for all $\gamma= 1, \cdots, \Gamma$
	\be%\begin{equation}
	\nonumber
	0&=& \hat Y^\gamma_t 
	+\partial_\alpha L^\gamma_T(Q^\gamma_t, \hat \alpha^\gamma_t) 
	+ \partial_\alpha L_S(\hat S_t, \hat \alpha^\gamma_t) + P^{\bar{\hat  \alpha}}_t  \\
	\label{eq: coupling mfc}
	&& \hspace{7mm}
	 -  p'\left(- Q^{0}_t - 
	 \Pi_{\Gamma} \cdot \left(\bar Q_t - \bar {\hat \alpha}_t \right)
	  \right)
	\left( - Q^{0}_t - \Pi_{\Gamma} \cdot \left(\bar Q_t - \bar{\hat  \alpha}_t \right)  \right) 
	\ee%\end{equation}	

	with $\bar{\hat  \alpha}_t = \E[\hat \alpha_t | \Fc_t^0]$ and $\Pi_{\Gamma}=(\pi_1, \cdots, \pi_\Gamma)^T$.

\no Conversely, suppose $(\hat S, \hat \alpha, \hat Y, \hat Z^0, \hat Z)$ is an adapted solution to the forward backward system \reff{FSDE}-\reff{eq: bsde mfc}, with the coupling condition \reff{eq: coupling mfc}, then $\hat \alpha$ is the optimal control minimizing $J^{\rm MFC}_{x_0}(\alpha)$ and $\hat S$ is the optimal trajectory. 
\end{Proposition}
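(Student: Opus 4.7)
The plan is to mimic the Euler--Lagrange strategy used in the Nash equilibrium proposition, but to track carefully the \emph{additional} contributions that arise because, in the mean-field control problem, perturbing $\alpha$ also perturbs the conditional mean $\bar\alpha=\E[\alpha\mid\Fc_t^0]$ that enters the spot price $P^{\bar\alpha}$. Concretely, for an admissible perturbation $\beta=(\beta^1,\ldots,\beta^\Gamma)$ I will compute
\[
d_\beta J^{C}_{x_0}(\hat\alpha)\;=\;\Frac{d}{d\epsilon}\Big|_{\epsilon=0}J^{C}_{x_0}(\hat\alpha+\epsilon\beta),
\]
expanding the three building blocks: the rest-of-the-world cost $-P^{\bar\alpha}_tQ^0_t$, the running contributions $P^{\bar\nu}_t(\alpha^\gamma_t-Q^\gamma_t)+L^\gamma_T+L_S$, and the terminal $g(S^\gamma_T)$. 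The state perturbation is simply $\tilde S^{\beta,\gamma}_t=\int_0^t\beta^\gamma_u du$ because $\sigma,\beta$ in the forward do not depend on the control.

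The key new ingredient compared with the Nash case is the chain rule applied to $P^{\bar\alpha}$. Since $P^{\bar\alpha}_t=p\bigl(-Q^0_t-\Pi_\Gamma\cdot(\bar Q_t-\bar{\hat\alpha}_t)\bigr)$, differentiating with respect to $\epsilon$ produces a factor
\[
p'\bigl(-Q^0_t-\Pi_\Gamma\cdot(\bar Q_t-\bar{\hat\alpha}_t)\bigr)\,\Pi_\Gamma\cdot\bar\beta_t,
\]
which, multiplied once by $-Q^0_t$ (from the rest-of-the-world) and once by $\pi^\gamma(\hat\alpha^\gamma_t-Q^\gamma_t)$ summed over $\gamma$, will exactly recombine (using the tower property $\E[\cdot\,\beta]=\E[\cdot\,\bar\beta]$ whenever the other factor is $\Fc^0$-measurable) to the extra term
\[
-\,p'\bigl(-Q^0_t-\Pi_\Gamma\cdot(\bar Q_t-\bar{\hat\alpha}_t)\bigr)\bigl(-Q^0_t-\Pi_\Gamma\cdot(\bar Q_t-\bar{\hat\alpha}_t)\bigr)
\]
that appears in \reff{eq: coupling mfc}. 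This is the step I expect to be the main obstacle: one has to rewrite $\E[X_t\,\bar\beta^\gamma_t]$ as $\E[\E[X_t\mid\Fc_t^0]\,\beta^\gamma_t]$ for several non-$\Fc^0$-measurable $X_t$ and regroup the resulting conditional expectations into the single expression above.

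Next I introduce the candidate adjoint $(\hat Y^\gamma,\hat Z^{0,\gamma},\hat Z^\gamma,V^{\gamma},V^{0,\gamma})$ and apply the It\^o formula with jumps to the product $\tilde S^{\beta,\gamma}_t\hat Y^\gamma_t$. Since $\tilde S^{\beta,\gamma}$ is continuous of finite variation, its brackets with the Brownian and Poisson integrals of $\hat Y^\gamma$ vanish, and the $dW$, $\tilde N$, $\tilde N^0$ pieces are true martingales by the square-integrability of $\hat Z,\hat Z^0,V,V^0$, so their expectations drop. Combined with the terminal condition $\hat Y^\gamma_T=\partial_s g(\hat S^\gamma_T)$ and the driver $-\partial_s L_S(\hat S^\gamma_t,\hat\alpha^\gamma_t)$, this yields
\[
\E\bigl[\tilde S^{\beta,\gamma}_T\partial_s g(\hat S^\gamma_T)\bigr]
=\E\!\int_0^T\!\bigl(\hat Y^\gamma_t\beta^\gamma_t-\tilde S^{\beta,\gamma}_t\partial_s L_S(\hat S^\gamma_t,\hat\alpha^\gamma_t)\bigr)dt.
\]
Plugging this into the Euler--Lagrange identity $d_\beta J^C_{x_0}(\hat\alpha)=0$, collecting coefficients of $\beta^\gamma_t$ (after the conditional-expectation rearrangement described above), and using that $\beta^\gamma$ is an arbitrary admissible perturbation, I obtain \reff{eq: coupling mfc} $dt\otimes d\P$-a.e.

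Finally, existence and uniqueness of the adapted solution $(\hat Y,\hat Z^0,\hat Z,V,V^0)$ of \reff{eq: bsde mfc} follow from standard linear BSDE-with-jumps theory: the driver is affine in $(Y,Z,V)$ with Lipschitz coefficients and the terminal $\partial_s g(\hat S^\gamma_T)$ is square-integrable under the assumed Lipschitz regularity of $g$. The converse direction is the usual sufficiency argument: assuming joint convexity of $L_S,L_T,g$ in their arguments and concavity of $p$ (so that $J^C$ is convex in $\alpha$), the first-order optimality condition \reff{eq: coupling mfc} together with the FBSDEJ is sufficient, and reversing the It\^o computation above shows $d_\beta J^C_{x_0}(\hat\alpha)=0$ for every $\beta$, hence $\hat\alpha$ is the minimizer.
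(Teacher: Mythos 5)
You have chosen the right strategy: the paper itself gives no proof of this proposition (only the Nash-equilibrium analogue is proved), and the natural argument is exactly the one you outline — Gateaux derivative of $J^{C}_{x_0}$, the chain rule through the price $P^{\bar\alpha}$, the tower property $\E[X_t\,\bar\beta_t]=\E[\E[X_t\mid\Fc^0_t]\,\beta_t]$, and duality via It\^o applied to $\tilde S^{\beta,\gamma}\hat Y^\gamma$ (your identity $\E[\tilde S^{\beta,\gamma}_T\partial_s g(\hat S^\gamma_T)]=\E\int_0^T(\hat Y^\gamma_t\beta^\gamma_t-\tilde S^{\beta,\gamma}_t\partial_s L_S)\,dt$ is correct). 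However, the step you yourself flag as the main obstacle is precisely the one you do not carry out, and your asserted outcome of that step is not what the computation gives. Write $A_t:=-Q^0_t-\Pi_\Gamma\cdot(\bar Q_t-\bar{\hat\alpha}_t)$, so that the aggregate electricity cost in $J^C$ is, after conditioning on $\Fc^0_t$, $\E\int_0^T p(A_t)A_t\,dt$ with $\partial A_t/\partial\bar\alpha^\gamma=+\pi^\gamma$. Differentiating the terms $-P^{\bar\alpha}_tQ^0_t$ and $\sum_\gamma\pi^\gamma P^{\bar\alpha}_t(\alpha^\gamma_t-Q^\gamma_t)$ in the direction $\beta$ and regrouping as you describe yields the coefficient $P^{\bar{\hat\alpha}}_t+p'(A_t)A_t$ (the marginal price $\frac{d}{dA}[p(A)A]$), i.e. the extra term is $+p'(A_t)A_t=-p'(A_t)\bigl(Q^0_t+\Pi_\Gamma\cdot(\bar Q_t-\bar{\hat\alpha}_t)\bigr)$, whereas you claim the terms "exactly recombine" to $-p'(A_t)A_t$ as literally typeset in \eqref{eq: coupling mfc}. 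These differ by a sign. The internal evidence of the paper (the linear-quadratic example, where the effective price is $p_0+2p_1A_t=p(A_t)+p'(A_t)A_t$, consistent with Proposition 3.3 of the cited reference) supports the $+p'(A_t)A_t$ version, so the displayed sign in \eqref{eq: coupling mfc} is most plausibly a typo; but as your proposal stands, the decisive computation is only announced, and the announced result agrees with the (apparently erroneous) display rather than with what the chain rule actually produces. You must carry out the differentiation explicitly and either obtain and justify the corrected coupling condition or reconcile the sign; without this the core of the proof is asserted, not proven.

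Two smaller points. First, existence and uniqueness for \eqref{eq: bsde mfc} is even simpler than "linear BSDE theory": the driver $-\partial_s L_S(\hat S^\gamma_t,\hat\alpha^\gamma_t)$ does not involve $(\hat Y,\hat Z^0,\hat Z,V,V^0)$ at all, so it is a martingale-representation argument in the filtration generated by $(B^0,B^\gamma,\tilde N,\tilde N^0)$, requiring only square integrability of $\partial_s g(\hat S^\gamma_T)$ and of $\int_0^T|\partial_s L_S|\,dt$ — state this. Second, in the converse your convexity justification is not adequate: concavity of $p$ does not make $\alpha\mapsto J^C_{x_0}(\alpha)$ convex (note $\frac{d^2}{dx^2}[x\,p(x)]=2p'(x)+xp''(x)$ can be negative for concave $p$); what is needed is convexity of $L_S$, $L^\gamma_T$, $g$ together with convexity of $x\mapsto x\,p(x)$ (satisfied in the linear increasing case $p(x)=p_0+p_1x$, $p_1>0$), after which stationarity of the convex functional at $\hat\alpha$, obtained by reversing the It\^o/duality computation, does give minimality.
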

\subsection{Explicit solution of the MFC with 1 region} \label{explicit-exple}
In this section, we provide an example where an explicit solution of the MFC problem is obtained.
We consider a linear pricing rule of the following form     
\be\label{eq: linear pricing rule}
p(x)=p_0 + p_1 x.
\ee
 The storage cost $L_{S}$ is defined by: For $A_1<0, A_2>0, C<0$,$$L_S(s,\alpha) =A_1 s + \frac{A_2}{2} s^2 + \frac{C}{2} \alpha^2.$$
 For some given positive constant $\{K^{\gamma}\}_{\gamma=1}^{\Gamma}$, the transmission cost is defined by   $$ L^\gamma_T(q,\alpha) =\frac{K^\gamma}{2}\left( q - \alpha\right)^2.$$
 For some constants $B_{1}$ and $B_{2}>0$, the terminal cost $$ g(s) =\frac{B_2}{2}\left(s-\frac{B_1}{B_2}\right)^2.$$

%\subsection{Assumptions}
%As in \cite{clemence2017grid}, we consider a %linear pricing rule of the following form     
%\be\label{eq: linear pricing rule}
%p(x)=p_0 + p_1 x,
%\ee
%and we assume that the storage cost is expressed in term of the current cost of the battery and the current cost of storage capacity. Indeed, we define respectively, the storage cost, the transmission cost and the terminal cost as follow: 
%\begin{enumerate}
%\item For $A_1<0, A_2>0, C<0$,$$L_S(s,\alpha) =A_1 s + \frac{A_2}{2} s^2 + \frac{C}{2} \alpha^2.$$
%\item  For some given positive constant $\{K^{\gamma}\}_{\gamma=1}^{\Gamma}$    $$ L^\gamma_T(q,\alpha) =\frac{K^\gamma}{2}\left( q - \alpha\right)^2.$$
%\item  For some constants $B_{1}$ and $B_{2}>0$, $$ g(s) =\frac{B_2}{2}\left(s-\frac{B_1}{B_2}\right)^2.$$
%\end{enumerate}
Now, we will consider the simple case of one region, i.e. when $\pi =1$. we aim to find an explicit solution to the MFC problem associated to the linear quadratic case. 

\no {\bf Step 1.}~ In this first step, we use the forward backward system (\eqref{eq: bsde mfc}, \eqref{FSDE}) and the coupling condition \eqref{eq: coupling mfc} in order to get the optimal control $\bar{\alpha}$ and the optimal trajectory $\bar{S}$ associated to one node in this region. We have
\begin{equation}
\label{FBSDE-explicit}
\begin{cases}
&d\bar S_t= \bar \alpha_t dt,~~\bar S_0 = 0,\\
&d \bar Y_t= - (A_2\bar S_t  + A_1)dt + \bar Z^{0}_t dB^0_t+\int_{E}\bar{V}_{s}^{\gamma, \star}(e)\tilde{N}(ds,de),~~\bar Y_T = B_2 \bar S_T - B_1.
\end{cases}
\end{equation}
To find the optimal control $\bar{\alpha}$, we use firstly the coupling condition \eqref{eq: coupling mfc} to obtain
\begin{equation}
\bar{Y}_{t}-K(\bar{Q}_{t}-\bar{\alpha}_{t})+C\bar{\alpha}_{t}+P^{\bar{\alpha}}_{t}-p'(-Q^{0}_{t}-\bar{Q}_{t}+\bar{\alpha}_{t})(-Q^{0}_{t}-\bar{Q}_{t}+\bar{\alpha}_{t})=0,
\end{equation}
where $Q^{0}$ and $Q$ are defined by \eqref{jump-q}.
Now, using Proposition $(3.3)$ in \cite{clemence2017grid} and the linear form of $p$ in \eqref{eq: linear pricing rule}, we obtain
\begin{equation}
P^{\bar{\alpha}}_{t}=p_{0}+2p_{1}(-Q^{0}_{t}-\bar{Q}_{t}+\bar{\alpha}_{t}),
\end{equation}
and we obtain the following expression of the optimal control $\bar{\alpha}$:
\begin{align*}
\bar{\alpha_{t}}&=-\frac{1}{K+C+p_{1}}\Big(\bar{Y}_{t}+p_{0}-p_{1}Q^{0}_{t}-(p_{1}+K)\bar{Q}_{t}\Big)\\
&=-\Delta(\bar{Y}_{t}+b_{t}),
\end{align*}
where $\Delta=\frac{1}{K+C+p_{1}}$ and $b_{t}=p_{0}-p_{1}Q^{0}_{t}-(p_{1}+K)\bar{Q}_{t}$.\\

\noindent We expect the solution of the FBSDE \eqref{FBSDE-explicit} to be affine. It has the following 
form: 
\begin{equation}
\bar{Y}_{t}=\bar{\phi}_{t}\bar{S}_{t}+\bar{\psi}_{t},
\end{equation} 
where $\phi$ and $\psi$ are deterministic functions.
Computing $d\bar{Y}_{t}$ from this expression, we obtain 
\begin{equation}
d\bar{Y}_{t}=\bar{S}_{t}(-\Delta \bar{\phi}_{t}^{2}+\dot{\bar{\phi}}_{t})dt-\Delta\bar{\phi}_{t}(\bar{\psi}_{t}dt+ b_{t})dt+\dot{\bar{\psi}}_{t}.
\end{equation}
Identifying the two expressions of $d\bar{Y}_{t}$ we get, in one hand, that 
\begin{equation}
\label{Riccati}
\dot{\bar{\phi}}_{t}-\Delta {\bar{\phi}}^{2}_{t}+A_{2}=0,\quad \bar{\phi}(T)=B_{2}
\end{equation}
which is a Riccati equation.

In the other hand, we obtain that $\psi$ is the unique solution of the BSDE 
\begin{equation}
d\bar{\psi}_{t}=\Delta\bar{\phi}_{t}(\bar{\psi}_{t}+ b_{t})dt-A_{1}dt+\bar{Z}^{0}_{t}dB_{t}^{0}+\int_{E}\bar{V}_{s}^{\gamma, \star}(e)\tilde{N}(ds,de).
\end{equation}
Consequently, substituting $b_{t}$,  we get the following BSDE 
\begin{equation}
d\bar{\psi}_{t}=\Delta\bar{\phi}_{t}(\bar{\psi}_{t}+ \bar{P}_{t})dt+\bar{Z}^{0}_{t}dB_{t}^{0}+\int_{E}\bar{V}_{s}^{\gamma, \star}(e)\tilde{N}(ds,de), \quad \psi_{T}=-B_{1}.
\end{equation}
 This allows us to find the expression of the approximated electricity price. In fact, in one hand we have
\begin{align*}
\bar{\psi}_{t}&=\Delta\bar{\phi}_{t}(\bar{\psi}_{t}+ \bar{P}_{t})dt+\bar{Z}^{0}_{t}dB_{t}^{0}+\int_{E}\bar{V}_{s}^{\gamma, \star}(e)\tilde{N}(ds,de)\\
&=\Delta\bar{\phi}_{t}(\bar{Y}_{t}-\bar{\phi}_{t}\bar{S}_{t}+\bar{P}_{t})dt+\bar{Z}^{0}_{t}dB^{0}_{t}+\int_{E}\bar{V}_{s}^{\gamma, \star}(e)\tilde{N}(ds,de)\\
&=\Delta\bar{\phi}_{t}\bar{Y}_{t}dt-\Delta(\bar{\phi}_{t})^{2}\bar{S}_{t} dt+\int_{E}\bar{V}_{s}^{\gamma, \star}(e)\tilde{N}(ds,de).
\end{align*}
In the other hand, we have that $d\bar{\psi}_{t}=d\bar{Y}_{t}-\dot{\bar{\phi}}_{t}S_{t}dt-\bar{\phi}_{t}dS_{t}$. So, we obtain 
\begin{align*}
-(A_{2}\bar{S}_{t}dt+A_{1})dt-\Delta\bar{\phi}_{t}\bar{P}_{t}dt=\dot{\bar{\phi}}_{t}\bar{S}_{t}dt+\bar{\phi}_{t}d\bar{S}_{t}+\Delta\bar{\phi}_{t}\bar{Y}_{t}-\Delta(\bar{\phi}_{t})^{2}\bar{S}_{t} dt.
\end{align*}
Finally, Using the Riccati equation \eqref{Riccati} in the equation above, we obtain directly the following price expression 
\begin{equation*}
\bar{P}_{t}=-\frac{A_{1}}{\Delta \bar{\phi}_{t}}+b_{t}.
\end{equation*}
As it can be seen, $\bar{\Psi}$ is the solution of linear BSDE with jumps. So it has the following expression 
\begin{equation*}
\bar{\Psi}_{t}=\mathbb{E}[-\Gamma_{t,T}B_{1}+\int_{t}^{T}\Gamma_{t,u}\Delta\bar{\phi}_{u}\bar{P}_{u}du|\mathcal{F}^{0}_{t}],
\end{equation*}
where $\Gamma_{t,T}$ is the adjoint process and in this case, it is the solution of 
\begin{equation*}
d\Gamma_{t,s}=\Gamma_{t,s}\Delta \bar{\phi}_{s}ds, 
\end{equation*}
which is $\Gamma_{t,s}=\exp(\int_{t}^{s}\Delta \bar{\phi}_{s}ds)$.
Consequently, $\Psi_{t}$ is given by 

\begin{equation*}
	\bar \Psi_t= - B_1 \exp\left\{ - \int_t^T \Delta \bar \phi(u) du\right\}-\E\left[
	 \int_t^T  \Delta \bar \phi(u)  \exp\left\{ - \int_t^u \Delta \bar \phi(s) ds\right\} \bar P_u du | \Fc^0_t 
	 \right].
	\end{equation*}
The function $\bar \phi$  is given by
	\b*
	\bar \phi(t)&=&- \frac{\rho}{\Delta}\; \frac{
	e^{- \rho (T-t)}(- B_2 \Delta + \rho) - e^{ \rho (T-t)}(B_2 \Delta + \rho)
	}
	{
	e^{- \rho (T-t)}(- B_2 \Delta + \rho) + e^{ \rho (T-t)}( B_2 \Delta + \rho)
	}\;~~\mbox{with}~~\rho := \sqrt{A_2 \Delta},
	\e*	
Now, to find $\bar{S}_{t}$, it suffices to solve the following  simple EDO 
\begin{equation}
d\bar{S}_{t}=\big[-\Delta\bar{\phi}_{s}\bar{S}_{s}-\Delta (\Psi_{s}+\bar{P}_{s}+\frac{A_{1}}{\Delta \bar{\phi}_{s}})\big]ds,
\end{equation}
for which the solution is given 
\begin{equation}
\bar{S}_{t}=\bar{S}_{0}\exp(\int_{0}^{t}-\Delta\bar{\phi}_{s}ds)-\Delta\int_{0}^{t}\exp(\int_{u}^{t}-\Delta\bar{\phi}_{s}ds)(\Psi_{u}+\bar{P}_{u}+\frac{A_{1}}{\Delta \bar{\phi}_{u}})du.
\end{equation}
As $\bar{S}_{0}=0$, the solution is then 
\begin{equation*}
\bar S_t=%\exp\left\{ - \int_0^t \Delta \bar \phi(u) du\right\} \bar S_0  +
-\Delta \int_0^t    \exp\left\{ - \int_u^t \Delta \bar \phi(s) ds\right\} \left(
\bar P^{}_u + \bar \Psi_u + \frac{ A_1}{\ \Delta \bar \phi(u)}\right)
 du .
\end{equation*}

\no {\bf Step 2.}~Once we obtain all the optimal elements of one node in the first step, we use the FBSDE \eqref{nash-FBSDE} and the coupling condition \eqref{nash coupling condition} to find the optimal objects associated to  one region containing a number of identical nodes.

\noindent Using the FBSDE \eqref{nash-FBSDE}, we have 
\b*
	dS_t&=& - \delta \left( Y_t + P_t + \frac{A_1}{\delta \phi(t)}\right) dt, ~~S_0 = s_0,\\
	dY_t&=& - (A_2 S_t + A_1) dt + Z^0_t dB^0_t + Z_t dB_t+\int_{E}V_{s}(e) \tilde{N}(ds,de),~~Y_T = B_2 S_T - B_1. \\
	\e*
Again, we look at a solution of the form
	\b*
	Y_t &=& \varphi(t) S_t + \psi_t,
	\e*	
and using the coupling condition \eqref{nash coupling condition}, we obtain the expression the optimal control $\alpha$. In fact,  
\begin{align*}
Y_{t}-K(Q_{t}-\alpha_{t})+C\alpha_{t}+P^{\bar{\alpha}}_{t}-p_{1}(-Q^{0}_{t}-Q_{t} -\alpha_t)=0
\end{align*}
where $P^{\bar{\alpha}}_t=p_0+2p_1(-Q^{0}_{t}-\bar{Q}_{t}+\bar{\alpha}_t)$
Then 
\begin{align*}
\alpha_{t}&=-\delta\big(Y_{t}+p_0-KQ_t-2p_1(Q^{0}_t+\bar{Q}_{t}-\bar{\alpha}_t)+\frac{A_1}{\delta \phi_t}\Big)\\
&=-\delta(Y_t+P_t+\frac{A_1}{\delta \phi_t}),
\end{align*}
where  $\delta=\frac{-1}{K+C}$, $P_t=p_0-KQ_t-2p_1(Q^{0}_t+\bar{Q}_{t}-\bar{\alpha}_t)-\frac{A_1}{\delta \phi_t}$

% Now, the coupling condition states that
%	\b*
%	\alpha_t&=& - \theta \left( Y_t  + P_t  + \frac{A_1}{\delta \phi(t)} \right) = - \delta \left( \varphi(t) S_t + \psi_t + P_t \right),
%	\e*	
%where
%	\b*
%	\delta = \frac{1}{C+K}&\mbox{and}&
%	P_t= p_0  - p_1 (  Q^{r}_t + 2\bar Q^{0}_t -2 \bar \alpha_t) - K Q_t -\frac{A_1}{\delta \phi(t)}.
%	\e*

Once again, we aim to find a solution to the FBSDE  above which has the following form $Y_{t}=\varphi_{t}S_{t}+\psi_{t}$
	\b*
	Y_t &=& \varphi(t) S_t + \psi_t,
	\e*	
		
where $\varphi$ and $\psi$ can be explicitly calculated in the same ways as before. In fact, $\psi$ is the solution of the following 
\begin{equation}
d\psi_t=-\delta \varphi_t(\psi_t dt+P_t)+Z^{0}_tdB^{0}_t+Z_{t}dB_t+\int_{E}V(e)\tilde{N}(de,ds)
\end{equation}
and $\varphi_t$ satisfies the following Riccati equation 
\begin{equation}
\delta \varphi^{2}_{t}-\dot{\varphi}_{t}-A_2=0
\end{equation}
whose solution is given by
\b*
	\varphi(t)&=&- \frac{\rho}{\delta}\; \frac{
	e^{- \rho (T-t)}(- B_2 \delta + \rho) - e^{ \rho (T-t)}(B_2 \delta + \rho)
	}
	{
	e^{- \rho (T-t)}(- B_2 \delta + \rho) + e^{ \rho (T-t)}( B_2 \delta + \rho)
	}\;~~\mbox{with}~~\rho := \sqrt{A_2 \delta},
	\e*
	
As it can be seen, $\psi_{t}$ is the solution of the following BSDE driven by a $2$-dimensional Brownian motion

\begin{equation}
d\psi_t=-\delta \varphi_t(\psi_t+P_t)dt+\tilde{Z}_{t}d\tilde{B}_{t}+\int_{E}V(e)\tilde{N}(de,ds)
\end{equation}
where $\tilde{Z}_{t}=(Z^{0}_{t},Z_{t})$ and $\tilde{B}_{t}=(B^{0}_{t},B_{t})$.

Finally 
\begin{equation*}
S_t= s_0 \exp\left\{ - \int_0^t \delta  \varphi(u) du\right\} 
- \delta \int_0^t  \exp\left\{ - \int_u^t \delta \varphi(s) ds\right\} \left( 
P_u +  \psi_u +\frac{A_1}{\delta \phi(u)}
\right) du .
\end{equation*}	

\section{Appendix}
In this section, we extend some of the results of Hamadene \cite{hamadene1998backward} concerning FBSDEs in the Brownian setting  to the case of jumps.
 Let us note that arguments of proof are close to the one used by Hamadene in \cite{hamadene1998backward} with some minor modifications due to jumps setting. However, we still provide the proof of existence since it will be needed in the construction of mean field FBSDEJ's solution.\\
We look for the solution of the following fully coupled Forward-Backward SDEs with jumps 

\begin{equation}\label{Appendix system}
(\textbf{S})
\begin{cases}
&X_{t}=X_{0}+\displaystyle\int_{0}^{t}b_{s}(X_{s},Y_{s},Z_{s},K_{s})ds+\int_{0}^{t}\sigma_{s}(X_{s},Y_{s},Z_{s},K_{s}(e))dW_{s}\\
\vspace{0.3cm}
& \qquad +\int_{0}^{t}\int_{E}\beta_{s}(X_{s^{-}},Y_{s^{-}},Z_{s},K_{s}(e))\tilde{\pi}(ds,de),\hspace{0.2cm}0 \leq t\leq T,\, \P\hbox{-a.s.}\\
&Y_{t}=g(X_{T},\mathbb{P}_{X_{T}})-\int_{t}^{T}h_{s}(X_{s},Y_{s},Z_{s},K_{s})ds-\int_{t}^{T}Z_{s}dW_{s}-\int_{t}^{T}\int_{E}K_{s}(e)\tilde{\pi}(ds,de),
\end{cases}
\end{equation}
\noindent For $u=(x,y,z,k)$ and $u'=(x',y',z',k') \in \mathbb{R}^{m+m+m\times m}$, we define the function $\mathcal{A}$ as follows
\begin{align*}
\mathcal{\bar A}(t,u,u')&=\big[f(s,x,y,z,k)-f(s,x',y',z',k')\big](y-y')+\big[h(s,x,y,z,k)-h(s,x',y',z',k')\big](x-x')\\
&+[\sigma(s,x,y,z,k)-\sigma(s,x',y',z',k^{'})](z-z')\\
&+\int_{E}(\beta(s,x,y,z,k)-\beta(s,x',y',z',k'))(k-k')(e)\eta(ds,de).
\end{align*} 
We assume the following Assumption 
\begin{Assumptions}\label{bar H1}
\begin{equation*}
(\bar{H}1)
\begin{cases} 
(i)\mbox{ There exists } k>0, \mbox{ s.t } \forall t \in[0,T], \nu\in \mathbb{M}_{1}(\mathbb{R}^{m}\times \mathbb{R}^{m}), u, u' \in \mathbb{R}^{m+m+m \times m}, \,\,\,\,\\
 \qquad \qquad \mathcal{\bar A}(t,u,u')\leq -k|x-x^{'}|^{2}, \mathbb{P}\hbox{-a.s.}\\
(ii)\mbox{ There exists } k'>0, \mbox{ s.t } \forall \nu\in \mathbb{M}_{1}(\mathbb{R}^{m}\times \mathbb{R}^{m}),x,x'\in \mathbb{R}^{m}\\
\qquad \qquad(g(x,\nu)-g(x',\nu)).(x-x')\leq  k'|x-x'|^{2}, \mathbb{P}\hbox{-a.s.}\\
(iii)\,\, min\{k,k'\}\geq C(\tilde{C}+1).
\end{cases}
\end{equation*}
\end{Assumptions}
We also assume that 
\begin{equation*}
    \begin{cases}
    &(i)\quad  b,\sigma, g \mbox{ and } f \mbox{ satisfy the uniformly Lipschitz condition with respect to } (x,y,z,k).\\
    &(ii)\quad g \mbox{ is uniformly Lipschitz with respect to } $x$.
    \end{cases}
\end{equation*}

\begin{Proposition}\label{ Appendix existence H1}
Under Assumption $(\bar H1)$, there exists a unique solution $U=(X,Y,Z,K)$ of the FBSDE with jumps \eqref{Appendix system}
\end{Proposition}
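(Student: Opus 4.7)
My plan is to mirror the continuation/perturbation scheme used in the proof of Theorem \ref{existence H1} (which itself is the jump-setting analogue of Hamad\`ene's method from \cite{hamadene1998backward}), simplified by the absence of the mean-field coupling. Uniqueness will come from an It\^o computation on $\Delta X_t\,\Delta Y_t$ combined with the monotonicity $(\bar H1)(i)$ and the one-sided Lipschitz bound $(\bar H1)(ii)$ on $g$; existence will come from a Picard iteration where the forward SDE is perturbed by a small multiple of the backward increment, and the contraction constant will be tuned via condition $(\bar H1)(iii)$.

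\textbf{Uniqueness.} Given two solutions $U=(X,Y,Z,K)$ and $U'=(X',Y',Z',K')$, apply It\^o's formula to $\Delta X_t\,\Delta Y_t$ on $[0,T]$. By the BDG inequality and the $\mathcal{L}^2$/$\mathcal{L}^2_\eta$ integrability of the components, the $dW$ and $\tilde\pi$ integrals are true martingales, so taking expectation gives
\begin{equation*}
\mathbb{E}[\Delta X_T\,\Delta Y_T]\;=\;\mathbb{E}\int_0^T\bar{\mathcal{A}}(s,U_s,U'_s)\,ds.
\end{equation*}
By $(\bar H1)(i)$ the right-hand side is bounded above by $-k\,\mathbb{E}\!\int_0^T|\Delta X_s|^2ds$, while since $\Delta Y_T=g(X'_T)-g(X_T)$, $(\bar H1)(ii)$ yields the lower bound $k'\,\mathbb{E}|\Delta X_T|^2$ for the left-hand side. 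Hence $k'\,\mathbb{E}|\Delta X_T|^2+k\,\mathbb{E}\!\int_0^T|\Delta X_s|^2ds\le 0$, so $X\equiv X'$. With identical forward data and identical terminal value, the BSDE-with-jumps part then forces $Y=Y'$, $Z=Z'$ and $K=K'$ by the standard Lipschitz BSDEJ uniqueness argument.

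\textbf{Existence.} Fix a perturbation parameter $\delta\in(0,1]$. Define recursively $U^0\equiv 0$ and, for $n\ge 0$, let $U^{n+1}=(X^{n+1},Y^{n+1},Z^{n+1},K^{n+1})$ solve the scheme
\begin{equation*}
\begin{cases}
dX^{n+1}_t=\bigl[b_t(U^{n+1}_t)-\delta(Y^{n+1}_t-Y^n_t)\bigr]dt+\bigl[\sigma_t(U^{n+1}_t)-\delta(Z^{n+1}_t-Z^n_t)\bigr]dW_t\\
\qquad\qquad+\displaystyle\int_E\bigl[\beta_t(U^{n+1}_t,e)-\delta(K^{n+1}_t-K^n_t)(e)\bigr]\tilde\pi(dt,de),\\[2pt]
Y^{n+1}_t=g(X^{n+1}_T)+\displaystyle\int_t^T h_s(U^{n+1}_s)\,ds-\int_t^T Z^{n+1}_s\,dW_s-\int_t^T\!\!\int_E K^{n+1}_s(e)\,\tilde\pi(ds,de).
\end{cases}
\end{equation*}
Each step is a well-posed decoupled pair of (forward) Lipschitz SDEJ and (backward) Lipschitz BSDEJ with $(Y^n,Z^n,K^n)$ frozen on the right-hand side of the forward equation. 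Writing $\hat U^{n+1}:=U^{n+1}-U^n$ and applying It\^o's formula to $\hat X^{n+1}_t\,\hat Y^{n+1}_t$ exactly as in Theorem \ref{existence H1}, the monotonicity $(\bar H1)(i)$ kills the $|\hat X^{n+1}|^2$ contributions, $(\bar H1)(ii)$ controls the boundary term at $T$, and the Lipschitz bounds together with Young's inequality and \eqref{saut-lipschitz} produce an estimate of the form
\begin{equation*}
\gamma\Bigl(\mathbb{E}|\hat X^{n+1}_T|^2+\mathbb{E}\!\int_0^T\|\hat U^{n+1}_s\|^2_s\,ds\Bigr)\;\le\;\theta\Bigl(\mathbb{E}|\hat X^{n}_T|^2+\mathbb{E}\!\int_0^T\|\hat U^{n}_s\|^2_s\,ds\Bigr),
\end{equation*}
with constants $\gamma,\theta$ depending on $\delta$, on the Young parameters, and on the Lipschitz constants $C$ and the monotonicity constants $k,k'$. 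Condition $(\bar H1)(iii)$, $\min\{k,k'\}\ge C(\tilde C+1)$, is precisely what allows a choice of $\delta$ and of the Young parameters making $\theta/\gamma<1$, so that $(\hat X^n)$ is Cauchy in $\mathcal{L}^2(\Omega)$ at time $T$ and $(\hat U^n)$ is Cauchy in $\mathcal{L}^2\times\mathcal{L}^2\times\mathcal{L}^2\times\mathcal{L}^2_\eta$. Passing to the limit in the scheme yields a solution of $(\textbf{S})$.

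\textbf{Main obstacle.} As in the mean-field case, the genuine difficulty is not the structural It\^o computation but the bookkeeping of the contraction constants: one must verify that the set of admissible $(\delta,\epsilon,\tilde\epsilon)$ making $\theta<\gamma$ is non-empty, and it is exactly $(\bar H1)(iii)$ that secures this. The jump contribution introduces two additional technicalities, namely (a) justifying that $\int\hat X^{n+1}_s\,\hat K^{n+1}_s(e)\,\tilde\pi(de,ds)$ is a true martingale via BDG and the $\mathcal{L}^2_\eta$ bound, and (b) absorbing the compensator term $\int_0^T\!\int_E\hat K^{n+1}_s\,\hat\beta^{n+1}_s\,\eta(de,ds)$ using \eqref{saut-lipschitz} and the $|\cdot|_s$-norm introduced in Section \ref{section 2}; neither changes the structure of Hamad\`ene's argument but both are needed to recover a clean contraction in the product space including $\mathcal{L}^2_\eta$.
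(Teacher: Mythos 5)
Your uniqueness argument is the same as the paper's (It\^o on $\Delta X_t\Delta Y_t$, monotonicity of $\bar{\mathcal A}$ and of $g$ --- note that $(\bar H1)(ii)$ must be read with the inequality $\geq k'|x-x'|^2$, as in $(\mathbf{H1})$; this is what both you and the paper actually use), and your existence scheme is the same perturbed Picard iteration \eqref{sequence FBSDE}. The genuine gap is in how you close the contraction. Applying It\^o to $\hat X^{n+1}\hat Y^{n+1}$ and using $(\bar H1)(i)$--$(ii)$ together with Young's inequality gives, at best,
\begin{equation*}
k'\,\E[|\hat X^{n+1}_T|^2]+k\,\E\Big[\int_0^T|\hat X^{n+1}_s|^2ds\Big]+\frac{\delta}{2}\,\E\Big[\int_0^T\big(|\hat Y^{n+1}_s|^2+\|\hat Z^{n+1}_s\|^2+|\hat K^{n+1}_s|_s^2\big)ds\Big]\;\leq\;\frac{\delta}{2}\,\E\Big[\int_0^T\big(|\hat Y^{n}_s|^2+\|\hat Z^{n}_s\|^2+|\hat K^{n}_s|_s^2\big)ds\Big],
\end{equation*}
which is exactly the paper's estimate \eqref{estim F}: the backward increments of steps $n$ and $n+1$ carry the \emph{same} weight $\delta/2$ (any other Young parameter makes the comparison worse), and the forward increments of step $n$ do not appear at all on the right. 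Hence an inequality of your announced form $\gamma\big(\E|\hat X^{n+1}_T|^2+\E\int_0^T\|\hat U^{n+1}_s\|^2_s ds\big)\leq\theta\big(\E|\hat X^{n}_T|^2+\E\int_0^T\|\hat U^{n}_s\|^2_s ds\big)$ with $\theta/\gamma<1$ cannot be extracted from this computation, no matter how $\delta$ and the Young parameters are tuned; condition $(\bar H1)(iii)$ does not rescue it. What the paper adds, and what is missing from your plan, is a second independent a priori estimate (its Step 2): applying It\^o to $|\hat Y^{n}|^2$, using the Lipschitz property of $h$ and of $g$ and Gronwall's lemma, one obtains $\E\int_0^T\big(|\hat Y^{n}_s|^2+\|\hat Z^{n}_s\|^2+|\hat K^{n}_s|_s^2\big)ds\leq\tilde C\big(\E|\hat X^{n}_T|^2+\E\int_0^T|\hat X^{n}_s|^2ds\big)$. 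Substituting this on the right-hand side above and choosing $\delta$ of order $\min(k,k')/\tilde C$ (this is where $(\bar H1)(iii)$ enters) yields a contraction for the \emph{forward} quantities $\E|\hat X^{n}_T|^2+\E\int_0^T|\hat X^{n}_s|^2ds$ only; the Cauchy property of $(\hat Y^n,\hat Z^n,\hat K^n)$ is then recovered a posteriori from the same Step 2 bound. Without this backward-by-forward estimate your scheme simply does not contract.

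A secondary inaccuracy: the iteration is not a ``decoupled pair of Lipschitz SDEJ and BSDEJ''. In \eqref{sequence FBSDE} the forward coefficients at step $n+1$ are evaluated at $U^{n+1}$, i.e.\ they involve $(Y^{n+1},Z^{n+1},K^{n+1})$ themselves; only the $\delta$-perturbation uses the frozen $(Y^n,Z^n,K^n)$. So each iterate is itself a (perturbed) fully coupled FBSDE with jumps, whose solvability must be argued (by a continuation-in-$\delta$ or small-coupling argument); admittedly the paper is silent on this point as well, but your justification ``frozen right-hand side, hence well-posed'' is not correct as stated.
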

 \begin{proof}
 In the following, we will use in proofs the notation $\tilde C$ to denote a generic constant that may change from line to line and that depends in an implicit way on $T$ and the Lipschitz constants.\\
The key point of the proof is to  consider a sequence $U^{n}=(X^{n},Y^{n},Z^{n},K^{n})$ of processes defined recursively by :
$(X^{0},Y^{0},Z^{0},K^{0})=(0,0,0,0)$ and for $n\geq 1$, $U^{n}=(X^{n},Y^{n},Z^{n},K^{n})$ satisfies: $\forall\, 0\leq t\leq T$ and  $\delta \in ]0,1]$
\begin{align}\label{sequence FBSDE}
\displaystyle X^{n+1}_{t}=x+&\int_{0}^{t}\Big(f_s(X^{n+1}_{s},Y^{n+1}_{s},Z^{n+1}_{s},K^{n+1}_{s})-\delta(Y^{n+1}_{s}-Y^{n}_{s})\Big)ds\nonumber\\
&\displaystyle +\int_{0}^{t}\Big(\sigma_s(X^{n+1}_{s},Y^{n+1}_{s},Z^{n+1}_{s},K^{n+1}_{s})-\delta(Y^{n+1}_{s}-Y^{n}_{s})\Big)dW_{s} \nonumber\\&+\int_{0}^{t}\int_{E}\Big(\beta_s(X^{n+1}_{s},Y^{n+1}_{s},Z^{n+1}_{s},K^{n+1}_{s})-\delta(K^{n+1}_{s}-K^{n}_{s})\Big)\tilde{\pi}(ds,de),\end{align}
\begin{align*}
&\displaystyle Y_{t}^{n+1}=g(X^{n+1}_{T})-\int_{t}^{T}h_s(X^{n+1}_{s},Y^{n+1}_{s},Z^{n+1}_{s},K^{n+1}_{s})ds-\int_{t}^{T}Z^{n+1}_{s}dW_{s}-\int_{t}^{T}\int_{E}K^{n+1}_{s}(e)\tilde{\mu}(ds,de)
\end{align*}
For $n\geq 1$, $t \in[0,T]$, we consider the following processes
$$
\hat{X}^{n+1}_{t}:=X^{n+1}_{t}-X^{n}_{t},\quad \hat{Y}^{n+1}_{t}:=Y^{n+1}_{t}-Y^{n}_{t},\quad \hat{Z}^{n+1}_{t}:=Z^{n+1}_{t}-Z^{n}_{t},\qquad \hat{K}^{n+1}_{t}:=K^{n+1}_{t}-K^{n}_{t}.
$$ 
and for a function $\phi=\{f,h,\sigma,\beta\}$, we set 
$$
\hat{\phi}^{n+1}_{t}:=\phi(t,U^{n+1}_{t},\nu^{n}_{t})-\phi(t,U^{n}_{t},\nu^{n-1}_{t}), \quad \tilde{\phi}^{n}_{t}:=\phi(t,U^{n}_{t},\nu^{n}_{t})-\phi(t,U^{n}_{t},\nu^{n-1}_{t}) .
$$
In order to prove the existence of the solution, we will show that $(X^{n}, Y^{n},Z^{n},K^{n})_{n\geq 0}$ is a Cauchy sequence. \\
First, we apply Ito's formula to $\hat{ X}^{n+1}\hat{Y}^{n+1}$ and take the expectation  
\begin{align*}
&\E[\hat{X}_{T}^{n+1}\hat{Y}_{T}^{n+1}]=\E[\int_{0}^{T}\hat{Y}^{n+1}_{s}[\hat{f}^{n+1}_{s}-\delta (\hat{Y}^{n+1}_{s}-\hat{Y}^{n}_{s})]ds+\E[\int_{0}^{T}\hat{X}^{n+1}_{s}\hat{h}^{n+1}_{s}ds]\nonumber\\
&+\E[\int_{0}^{T}(\hat{\sigma}^{n+1}_{s}-\delta(\hat{Z}^{n+1}_{s}-\hat{Z}^{n}_{s}),\hat{Z}^{n+1}_{s})ds+\int_{0}^{T}\int_{E}\hat{K}^{n+1}_{s}(\hat{\beta}^{n+1}_{s}-\delta(\hat{K}^{n+1}_{s}-\hat{K}^{n}_{s}))\eta(de,ds)].
\end{align*}
Rearranging terms, we get 
\begin{align}\label{Appendix Ito}
&\E[\hat{X}^{n+1}_{T}(g(X^{n+1}_{T})-g(X^{n}_{T}))]+\delta\E[\int_{0}^{T}|\hat{Y}^{n+1}_{s}|^{2}+\|\hat{Z}^{n+1}_{s}\|^{2}+|\hat{K}^{n+1}_{s}|_{s}^{2}ds]\nonumber\\
-&\E\big[\int_{0}^{T}\hat{X}^{n+1}_{s}\hat{h}^{n+1}_{s}+\hat{Y}^{n+1}_{s}\hat{f}^{n+1}_{s}+\hat{\sigma}^{n+1}(s)\hat{Z}^{n+1}_{s}ds+\int_{0}^{T}\int_{E}\hat{K}^{n+1}_{s}\hat{\beta}^{n+1}_{s}\eta(de,ds)\big]\nonumber\\
=\delta &\E[\int_{0}^{T}\hat{Y}^{n+1}_{s}\hat{Y}^{n}_{s}+\hat{Z}^{n+1}_{s}\hat{Z}^{n}_{s}ds+\int_{0}^{T}\int_{E}\hat{K}^{n+1}_{s}\hat{K}^{n}_{s}\eta(de,ds)\big].
\end{align}
Using Assumption \eqref{bar H1} we get 
\begin{align}\label{ItoH}
&\E\big[ k^{'}|\hat{X}^{n+1}_{T}|^{2}+\delta\int_{0}^{T}|\hat{Y}^{n+1}_{s}|^{2}+\|\hat{Z}^{n+1}_{s}\|^{2}+|\hat{K}^{n+1}_{s}|_{s}^{2}ds
+k\int_{0}^{T}|\hat{X}^{n+1}_{s}|^{2}ds\big]\nonumber\\
&\leq\delta\E\big[\int_{0}^{T}\hat{Y}^{n+1}_{s}\hat{Y}^{n}_{s}+\hat{Z}^{n+1}_{s}\hat{Z}^{n}_{s}ds+\int_{0}^{T}\int_{E}\hat{K}^{n+1}_{s}\hat{K}^{n}_{s}\eta(de,ds)\big].
\end{align}
In addition, the elementary inequality $ab\leq a^{2}/2 +b^{2}/2$  yields to 
\begin{align}\label{EstimatesH}
&\E[\int_{0}^{T}\hat{Y}^{n+1}_{s}\hat{Y}^{n}_{s}+\hat{Z}^{n+1}_{s}\hat{Z}^{n}_{s}ds]+\int_{0}^{T}\int_{E}\hat{K}^{n+1}_{s}\hat{K}^{n}_{s}\,\eta(de,ds)\nonumber\\
&\leq \frac{1}{2}\E[\int_{0}^{T}|\hat{Y}^{n+1}_{s}|^{2}+\|\hat{Z}^{n+1}_{s}\|^{2}+| \hat{K}^{n+1}_{s}|_{s}^{2}ds]\nonumber\\
&+\frac{1}{2}\E[\int_{0}^{T}|\hat{Y}^{n}_{s}|^{2}+\|\hat{Z}^{n}_{s}\|^{2}+| \hat{K}^{n}_{s}|_{s}^{2}ds]
\end{align}
Pugging \eqref{EstimatesH} in \eqref{ItoH}, we obtain 
\begin{align}\label{estim F}
&k^{'}\E[|\hat{X}^{n+1}_{T}|^{2}]+k\E[\int_{0}^{T}|\hat{X}^{n+1}_{s}|^{2}]+\frac{\delta}{2}\E[\int_{0}^{T}|\hat{Y}^{n+1}_{s}|^{2}+\|\hat{Z}^{n+1}_{s}\|^{2}+|\hat{K}^{n+1}_{s}|_{s}^{2}ds]\\
&\leq  \frac{ \delta}{2}\left[  \E[\int_{0}^{T}|\hat{Y}^{n}_{s}|^{2}+\|\hat{Z}^{n}_{s}\|^{2}+|\hat{K}^{n}_{s}|_{s}^{2}ds]\right]
\end{align}
\underline{Step2:}\quad$\forall n\geq 1, \E\left[\int_{0}^{T}|\hat{Y}^{n}_{s}|^{2}+\|\hat{Z}_{s}\|^{2}+|\hat{K}|_{s}^{2}ds\right]\leq C^{1}\Big(\E\left[|\hat{X}^{n}_{T}|^{2}+\int_{0}^{T}|\hat{X}^{n}_{s}|^{2}ds\right]\Big)$\\
Apply It\^o formula to $|\hat{Y}^{n}|^{2}$
\begin{align*}
\displaystyle{|\hat Y^{n}_{t}|^{2}}&+\displaystyle{\int_{t}^{T}\|\hat Z^{n}_{s}\|^{2}ds+\int_{t}^{T} \int_{E}|\hat K^{n}_{s}(e)|^{2}\eta(de,ds)=|\hat Y^{n}_{T}|^{2}+}\displaystyle{2 \int_{t}^{T}\hat Y^{n}_{s}\hat{h}^{n}(s)ds}\\
&\displaystyle{-2\int_{t}^{T} \int_{E} \hat Y^{n}_{s}\hat K^{n}_{s}(e)\tilde{\pi}(de,ds)}-\displaystyle{2\int_{t}^{T}\hat Y^{n}_{s}\hat Z^{n}_{s}dB_{s}}
\end{align*}
Taking the expectation with Assumption, we obtain that 
\begin{align*}
\E\displaystyle{|\hat Y^{n}_{t}|^{2}+\frac{1}{2}\E\int_{t}^{T}\| \hat Z^{n}_{s}\|^{2}ds+}&\frac{1}{2}\E\int_{t}^{T} \int_{E}|\hat K^{n}_{s}(e)|^{2}\eta(de,ds)\\
&\leq\E| \hat Y^{n}_{T}|^{2}+\epsilon\displaystyle{ \E\int_{t}^{T}| \hat Y^{n}_{s} |^{2}ds}+\epsilon\displaystyle{ \E\int_{t}^{T}|\hat X^{n}_{s}|^{2}ds}.
\end{align*}
From there we can write 
\begin{equation*}
\E\displaystyle{|\hat Y^{n}_{t}|^{2}}\leq \tilde{C}\left[\displaystyle{ \E\int_{t}^{T}|\hat Y^{n}_{s} |^{2}ds}+\displaystyle{ \E\int_{t}^{T}|\hat X^{n}_{s} |^{2}ds}\right],\hspace{0.5cm} 0\leq t\leq T 
\end{equation*}
and hence, Gronwall's lemma yield's to 
\begin{equation}
\E\displaystyle{|\hat Y^{n}_{t}|^{2}}\displaystyle{ \leq\tilde{C}\left[\E [|g(X^{n+1}_{T})-g(X^{n}_{T})|^{2}]+\E\int_{0}^{T}|\hat X^{n}_{s} |^{2}ds\right]}.
\end{equation}
Then
\begin{align}
\E[\displaystyle{\int_{0}^{T}|\hat Z^{n}_{s}|^{2}ds+\int_{0}^{T} \int_{E}|\hat K^{n}_{s}(e)|^{2}\eta(de,ds)}]&\leq\E\displaystyle{|\hat Y^{n}_{t}|^{2}}+\displaystyle{\int_{0}^{T}|\hat Z^{n}_{s}|^{2}ds+\int_{0}^{T} \int_{E}|\hat K^{n}_{s}(e)|^{2}\eta(de,ds)}\nonumber\\
&\leq\tilde{C}[\E|\hat Y^{n}_{T}|^{2}+\displaystyle{ \E\int_{0}^{T}|\hat Y^{n}_{s} |^{2}ds}+\displaystyle{ \E\int_{0}^{T}|\hat X^{n}_{s} |^{2}ds}]
\end{align}
Finally, since $g$ is Lipschitz with respect to $x$ we have  
\begin{align}
\displaystyle{ \E\int_{0}^{T}|\hat Y^{n}_{s} |^{2}ds}+\frac{1}{2}\E[\displaystyle{\int_{0}^{T}|\hat Z^{n}_{s}|^{2}ds+\int_{0}^{T} \int_{E}|\hat K^{n}_{s}(e)|^{2}\eta(de,ds)}]&\leq\tilde{C}[\E|\hat Y^{n}_{T}|^{2}+\displaystyle{ \E\int_{0}^{T}|\hat X^{n}_{s} |^{2}ds}]\nonumber\\
&\leq\tilde{C}[\E|\hat X^{n}_{T}|^{2}+\displaystyle{ \E\int_{0}^{T}|\hat X^{n}_{s} |^{2}ds}].\nonumber
\end{align}
Plugging this estimates in \eqref{estim F}, we obtain
\begin{align}\label{Appendix estim F}
& \min(k,k^{'})\,\E[|\hat{X}^{n+1}_{T}|^{2}]+\E[\int_{0}^{T}|\hat{X}^{n+1}_{s}|^{2}]\leq  \frac{ \delta \tilde{C}}{2}\big[\E|\hat X^{n}_{T}|^{2}+\displaystyle{ \E\int_{0}^{T}|\hat X^{n}_{s} |^{2}ds}\big].
\end{align}
Take $\delta:=min(k,k^{'})/\tilde{C}$  we obtain that 
$$ \E[|\hat{X}^{n+1}_{T}|^{2}]+\E[\int_{0}^{T}|\hat{X}^{n+1}_{s}|^{2}]\leq  \frac{ 1}{2}\big[\E|\hat X^{n}_{T}|^{2}+\displaystyle{ \E\int_{0}^{T}|\hat X^{n}_{s} |^{2}ds}\big]
$$
Henceforth, $(\hat{X}^{n}_{T})_{n\geq 0}$ is a Cauchy sequence in $\mathcal{L}^{2}(\Omega,\P)$ and $(\hat{X}^{n})_{n\geq 0},(\hat{Y}^{n})_{n\geq 0}, (\hat{Z}^{n})_{n\geq 0}$ and $\hat{K}^{n})_{n\geq 0})$ are Cauchy sequences respectively in $\mathcal{L}^{2}([0,T],\Omega,dt\otimes d\P)$and $\mathcal{L}^{2}_{\nu}([0,T],\Omega,dt\otimes d\nu).$ Hence, if $X, Y$, $Z$ and $K$ are the respective limits of these sequences, passing to the limit in \eqref{sequence FBSDE}, we see that $(X,Y,Z,K)$ is a solution of
\begin{align*}
&X_{t}=X_{0}+\displaystyle\int_{0}^{t}f(s,X_{s},Y_{s},Z_{s},K_{s}(x))ds+\int_{0}^{t}\sigma(s,X_{s},Y_{s},Z_{s},K_{s}(x)))dW_{s}\\
\vspace{0.3cm}
& \qquad +\int_{0}^{t}\int_{E}\beta(s,X_{s^{-}},Y_{s^{-}},Z_{s^{-}},K_{s}(x))\tilde{\pi}(ds,de),\hspace{0.2cm}0 \leq t\leq T,\, \P-a.s.\\
&Y_{t}=g(X_{T},\mathbb{P}_{X_{T}})-\int_{t}^{T}h(s,X_{s},Y_{s},Z_{s},K_{s}(e))ds-\int_{t}^{T}Z_{s}dW_{s}-\int_{t}^{T}\int_{E}K_{s}(x)\tilde{\pi}(ds,de),
\end{align*}
\underline{Step3: Uniqueness} To show that the system \eqref{Appendix system} has a unique solution. we suppose that $(\bar{X},\bar{Y},\bar{Z},\bar{K})$ is also solution of \eqref{Appendix system}. Let $\bar
X=X^{1}-X^{2}$, $\bar
Y=Y^{1}-Y^{2}$, $\bar
X=Z^{1}-Z^{2}$, $\bar
K=K^{1}-K^{2}$. Then with Itô’s formula, we obtain \\
\begin{align*}
&d\bar X_{T}\bar Y_{T}=\bar X_{t}d\bar Y_{t}+\bar Y_{t}d\bar X_{t}+d\langle\bar X,\bar{Y}\rangle_{t}.
\end{align*}
Hence 
\begin{align*}
\E[\bar X_{T}\bar Y_{T}]&=\E[\int_{0}^{T}\bar{Y}_{s}\big[b_{s}(X^{1}_{s},Y^{1}_{s},Z^{1}_{s},K^{1}_{s})-f_{s}(X_{s}^{2},Y_{s}^{2},Z_{s}^{2},K_{s}^{2})\big]ds]\\
&+\E[\int_{0}^{T}\bar{X}_{s}\big[h_{s}(X^{1}_{s},Y^{1}_{s},Z^{1}_{s},K^{1}_{s})-b_{s}(X_{S}^{2},Y_{s}^{2},Z_{s}^{2},K_{s}^{2})\big]ds]\\
&+\E[\int_{0}^{T}\bar{Z}_{s}\big[\sigma_{s}(X^{1}_{s},Y^{1}_{s},Z^{1}_{s},K^{1}_{s})-\sigma_{s}(X_{s}^{2},Y_{s}^{2},Z_{s}^{2},K_{s}^{2})ds\big]\\
&+\int_{0}^{T}\int_{E}\bar{K}_{s}(e)\big[\beta_{s}(X^{1}_{s},Y^{1}_{s},Z^{1}_{s},K^{1}_{s})-\beta_{s}(X^{2},Y_{s}^{2},Z_{s}^{2},K_{s}^{2})\big]\eta(de,ds)].\\
\end{align*}
Using Assumption $(H1)-(ii)$, we have
\begin{equation}
[k^{'}|\bar{X}_{T}|^{2}] \leq \E[\bar X_{T}\bar Y_{T}].
\end{equation}
and the right side of the above it\^o equation is in fact $\mathcal{\bar A}(t,U^{1},U^{2})$ and hence using once again Assumption $(H1)-(i)$, we get 
 \begin{align*}
 k^{'}\E[|\bar{X}_{T}|^{2}]\leq -k\E[\int_{0}^{T}|\bar{X}_{s}|^{2}ds\qquad\iff k^{'}\E[\bar{X}_{T}|^{2}]+k\E[\int_{0}^{T}|\bar{X}_{s}|^{2}ds]\leq 0.
 \end{align*}
 We conclude that $ \bar{X}=0,\,\,\P$-a.s. and it follow that $\bar{Y}=0$.
\end{proof}
 
\bibliographystyle{acm}
\bibliography{Extended-MFG-18}

\end{document}